\numberwithin{equation}{section}
\theoremstyle{plain}
\newtheorem{theorem}{Theorem}[section]
\newtheorem{lemma}[theorem]{Lemma}
\newtheorem{proposition}[theorem]{Proposition}
\newtheorem{cor}[theorem]{Corollary}
\theoremstyle{remark}
\DeclarePairedDelimiter{\floor}{\lfloor}{\rfloor}
\DeclarePairedDelimiter\ceil{ \lceil}{ \rceil}
\newcommand{\N}{\mathbb{N}}
\newcommand{\R}{\mathbb{R}}
\newcommand{\mX}{\mathbb{X}}
\newcommand{\Z}{\mathbb{Z}}
\newcommand{\p}{\mathbb{P}}
\newcommand{\cA}{\mathcal{A}}
\newcommand{\cD}{\mathcal{D}}
\newcommand{\cF}{\mathcal{F}}
\newcommand{\cN}{\mathcal{N}}
\newcommand{\cM}{\mathcal{M}}
\newcommand{\cP}{\mathcal{P}}
\newcommand{\cR}{\mathcal{R}}
\newcommand{\cV}{\mathcal{V}}
\newcommand{\cX}{\mathcal{X}}
\newcommand{\fA}{\mathfrak{A}}
\newcommand{\fD}{\mathfrak{D}}
\newcommand{\fN}{\mathfrak{N}}
\newcommand{\fP}{\mathfrak{P}}
\newcommand{\fV}{\mathfrak{V}}
\newcommand{\E}{\mathbb{E}}
\newcommand{\V}{\operatorname{Var}} 
\renewcommand{\phi}{\varphi}
\renewcommand{\epsilon}{\varepsilon}
\newcommand{\diff}{\mathrm{d}}
\newcommand{\poi}{\operatorname{Poi}}
\newcommand{\ul}{\underline}
\newcommand{\ol}{\overline}
\newcommand{\wt}{\widetilde}
\newcommand{\wh}{\widehat}
\newcommand{\1}[1]{\,\mathds{1}\! \left\{ #1 \right\} }
\newcommand{\cPc}{\mathcal{P}^\circ}
\newcommand{\cPd}{\mathcal{P}^\dagger}
\newcommand{\Wc}{W^\circ}
\newcommand{\Mc}{M^\circ}
\newcommand{\dN}{\mathrm{N}}
\newcommand{\as}{\alpha_{stab}}
\begin{document}

\begin{frontmatter}
\title{
On the Bahadur representation of sample quantiles for score functionals
}
\runtitle{Bahadur representation for score functionals}
\begin{aug}
\author{
\fnms{Johannes} \snm{Krebs}  
\ead[label=e1]{johannes.krebs@ku.de}
}
\address{KU Eichst{\"a}tt, Ostenstra\ss e 28, 85072 Eichst{\"a}tt, Germany.
\printead{e1} }

\runauthor{Johannes Krebs}
\affiliation{KU Eichst{\"a}tt}
\end{aug}

\begin{abstract}
We establish the Bahadur representation of sample quantiles for stabilizing score functionals in stochastic geometry and study local fluctuations of the corresponding empirical distribution function. The scores are obtained from a Poisson process. We apply the results to trimmed and Winsorized means of the score functionals and establish a law of the iterated logarithm for the sample quantiles of the scores.
\end{abstract}

\begin{keyword}[class=MSC2020]
\kwd[Primary ]{62G30}
\kwd{60F05}
\kwd[; secondary ]{60F15}
\kwd{60F17}
\end{keyword}

\begin{keyword}
\kwd{Bahadur representation} \kwd{Law of the iterated logarithm} \kwd{Poisson process}  \kwd{Stochastic geometry} \kwd{Strong stabilization}
\end{keyword}
%

%
%
\end{frontmatter}

\section{Introduction}
Let $\{X_1,X_2,\ldots\}$ be an iid sequence of real-valued random variables with distribution function $F$. Let $p\in (0,1)$ and denote $\psi_p$ the $p$-quantile of $F$, i.e., $\psi_p = \inf\{ t \in \R: F(t) \ge p\}$. Let $F_n$ be the empirical distribution function of the first $n$ observations and $\psi_{p,n}$ the corresponding empirical $p$-quantile.

 Given that $F$ has at least two derivatives $f$ and $f'$ in a neighborhood of $\psi_p$ such that $f'$ is bounded in this neighborhood and that the density $f$ satisfies $f(\psi_p)>0$, the celebrated Bahadur representation \cite{bahadur1966note} states
 \begin{align}\label{E:BahadurOrig}
 	\psi_{p,n} = \psi_p + \frac{p - F_n (\psi_p)}{f(\psi_p)} + O_{a.s.}(n^{-3/4} (\log n)^{1/2} (\log \log n)^{1/4} ).
 \end{align}
Several refinements and extensions of this result have been pursued, with a particular attention on time series satisfying certain dependence patterns.

Kiefer's results (\cite{kiefer1967bahadur, kiefer1970deviations, kiefer1970old}) provided exact rates in the Bahadur representation for iid sequences. In the context of time series, extensions have been given in Sen \cite{sen1968asymptotic} for $m$-dependent processes, in Dutta and Sen \cite{dutta1971bahadur} for stationary autoregressive processes and in Sen \cite{sen1972bahadur} for mixing processes. The Bahadur representation for short-range dependent processes has been considered in Hesse \cite{hesse1990bahadur} and for long-range dependent processes in Ho and Hsing \cite{ho1996}. More recently, Wu \cite{wu2005bahadur} provided a representation for nonlinear time series and gave more details for short- and long-range dependent processes. Furthermore, Kulik \cite{kulik2007bahadur} considered weakly dependent linear processes.
Polonik \cite{polonik1997minimum} gives a Bahadur-Kiefer approximation for generalized quantile processes in the geometric context of minimum volume sets. Other contributions can be found in \cite{deheuvels1997strong, einmahl1996short, shang2010}.

In this manuscript we extend the Bahadur representation to so-called score functionals, which possess a stabilizing property. The latter quantifies the dependence within the observed data sample.

Many important functionals in stochastic geometry can be written as a sum of individual scores. Let $\cP$ be a Poisson process with unit intensity on $\R^d$, $d\ge 2$, defined on a generic probability space $(\Omega,\cF,\p)$. Let $W_n = [-n^{1/d}/2,n^{1/d}/2]^d$ be an observation window and let $\cP_n$ be the restriction of $\cP$ to $W_n$. We consider
\begin{align*}
	H(\cP_n) = \sum_{y\in\cP_n} \xi(y,\cP_n), \quad n\in\N,
\end{align*}
where the scores $\xi(y,\cP_n)$ can be considered as the local contributions of the single points to the aggregated statistic $H$ provided the scores are stabilizing in a certain sense made precise below. The ideas of stabilizing score functionals in stochastic geometry date back to Penrose and Yukich \cite{penrose2001central, penrose2003weak} and have proven particularly useful to derive various types of limit theorems. Intuitively, stabilization can be related to the dependence between the single scores. Loosely speaking, sample data of fast stabilizing functionals corresponds for instance to times series data with a fast decaying dependence structure.
Central limit theorems \cite{baryshnikov2005gaussian, reitzner2013central, blaszczyszyn2019limit, flimmel2020limit} and normal approximations \cite{barbour2006normal, chatterjee2008new, lachieze2017new} have been studied in many variants in this context.

We establish the Bahadur representation for the scores $\{\xi(y,\cP_n): y \in \cPc_n\}$, where $\cPc_n$ (given in detail later) is a suitable subset of $\cP_n$, which omits points near the boundary. More precisely, we replace the initial iid sample $\{X_1,\ldots,X_n\}$ in \eqref{E:BahadurOrig} with the scores $\{\xi(y,\cP_n): y \in \cPc_n\}$ and along with a suitable empirical distribution function $\wh F_n$ and the corresponding empirical $p$-quantile $\wh \psi_{p,n}$, we give sufficient conditions for the following variant of \eqref{E:BahadurOrig}
\begin{align*}
 	\wh \psi_{p,n} = \psi_p + \frac{p - \wh F_n (\psi_p)}{f(\psi_p)} + O_{a.s.}(n^{-3/4} (\log n)^{5/4 + 3d/(4\alpha_{stab})} ).
\end{align*}
Here the parameter $\as >0$ describes the stabilizing behavior of the functional $\xi$. We have $\as = d$ in many applications in $d$-dimensional Euclidean space. This representation will even be uniform for $p$ in some compact interval. Moreover, the exponent in the log-factor can be reduced to $3/4 + 3d/(4\alpha_{stab})$ under a reasonable assumption, discussed below.

Apart from the stabilizing property of the scores, the most important features of the present setting are as follows. The sample size $\# \cPc_n$ is random as it is determined by the number of points in an observation window. The score functionals are translation invariant and typically non-linear. 

Plainly, the Bahadur representation requires the distribution of the score functionals to admit a positive density in a given neighborhood of a probability $p$. This rules out classical scores in topological data analysis such as the Euler characteristic or Betti numbers, which are integer valued and have a discrete distribution without a further modification. Potential scores in the present setting are volume and surface areas of the typical cell of a (Poisson-Voronoi) tessellation or statistics of convex hulls of samples of random points \cite{lachieze2019normal} as well as statistics of the minimal spanning tree \cite{steele1988growth,yukich2000asymptotics,chatterjee2017minimal} or k-nearest neighbor graph \cite{bickel1983sums}. We give some examples below.\\[5pt]

We conclude this section by introducing the necessary notation. 
Let $\dN$ be the set of $\sigma$-finite counting measures on $\R^d$ (``point configurations in $\R^d$''). So, we will treat elements from $\dN$ as sets in the following. For $\cM\in\dN$ and a Borel set $A\subseteq\R^d$, we write $\cM(A)$ for the measure of $A$ under $\cM$. We equip $\dN$ with the smallest $\sigma$-field $\fN$ such that for each Borel set $A\subseteq\R^d$ the evaluation map $m_A\colon \dN \to \N_0\cup \{ \infty \}; \cM \mapsto \cM(A)$ is measurable. Hence, a point process is a random element in $(\dN,\fN)$.

We write $\|\cdot\|$ for the Euclidean norm on $\R^d$ and $B(y,t) = \{z\in\R^d: \|z-y\| \le t\}$ for the closed $t$-neighborhood w.r.t.\ $\|\cdot\|$ of a point $y\in\R^d$ for $t\ge 0$. If $A\subseteq\R^d$ is Borel measurable, we write $|A|$ for its Lebesgue measure. If $A\subseteq\R^d$ is countable, we write $\# A$ for the number of its elements. Moreover, for a Borel set $A\subseteq \R^d$ and $z\in\R^d$, we write $A+z = \{x + z: x\in A\}$ for the set $A$ translated by $z$ and similarly $A-z = \{x - z: x\in A\}$. We write $Q_z = [z-1/2,z+1/2]^d$ for the $d$-dimensional cube with edge length 1 centered at $z\in\R^d$.

Given two sequences $(a_n)_n,(b_n)_n$, we write $a_n \lesssim b_n$ to indicate that $a_n\le C b_n$ for some universal constant $C>0$. The notation $a_n \asymp b_n$ is used to indicate that $a_n \lesssim b_n$ and $b_n \lesssim a_n$. We write $a_n \sim b_n$ if $a_n/b_n \to 1$.

For $x\in\R$, we write $\floor{x}$ (resp. $\ceil{x}$) for the maximal (minimal) integer not greater (not smaller) than $x$.

Let $(Z_n)_n$ be a sequence of random variables defined on a common probability space. Let $(u_n)_n\subseteq \R_+$ be a null sequence. If there is an $\epsilon_0 > 0$ such that $\p( \exists n |\forall m\ge n : |Z_m| \le \epsilon_0 u_m) =1$, we write $Z_n = O_{a.s.}(u_n)$ as $n\to\infty$. Moreover, if we have $\p( \exists n |\forall m\ge n : |Z_m| \le \epsilon_0 u_m) =1$ for all $\epsilon_0 > 0$, we write $Z_n = o_{a.s.}(u_n)$ as $n\to\infty$.

$\cP$ and $\cP'$ are homogeneous Poisson processes with unit intensity on $\R^d$ and are defined on $(\Omega,\cF,\p)$. $\cP$ and $\cP'$ are independent.

Let $\preceq$ denote the lexicographic ordering on $\Z^d$. For each $z\in\Z^d$, let $\cF_z$ denote the $\sigma$-field generated by $\cP$ restricted to the domain $\cup_{x\preceq z} Q_x$. So, $\cF_z$ is the smallest $\sigma$-field such that the number of points of $\cP$ (termed ``Poisson points'' in the following) in any bounded Borel set, which is contained in  $\cup_{x\preceq z} Q_x$, is measurable.

The space of c{\`a}dl{\`a}g functions on a given interval $I\subseteq \R$ is $D_I$. We write $\cD_I$ for the Borel-$\sigma$-field on $D_I$ generated by the Skorohod $J_1$-topology. $D_I$ is separable and topologically complete and $\cD_I$ coincides with the $\sigma$-field generated by the coordinate mappings, see \cite{bickel1971convergence, billingsley1968convergence}.

The rest of the paper is organized as follows. We introduce the setting in detail in Section~\ref{Section_Model} and present a uniform Bahadur representation in Section~\ref{Section_Results}, where we discuss potential examples, too. In Section~\ref{Section_Applications}, we establish local fluctuations of the empirical distribution function and -- based on the uniform Bahadur representation -- apply this result to trimmed and Winsorized means for samples of random size. Moreover, we present a law of the iterated logarithm for empirical quantiles of score functionals. All proofs are given in Section~\ref{Section_ProofsModel}, \ref{Section_Proofs}, \ref{Section_Supplement} and \ref{AppendixLemmas}.

\section{Distribution functions and quantiles of scores}\label{Section_Model}

The score functional $\xi$ is a measurable function from $\R^d \times \dN$ to $\R$ which is translation invariant in the sense that
\[
	\xi(y,P) = \xi(y-z,P-z)
	\]
for each $y,z\in\R^d$, $P\in\dN$. We set $\xi_r(y,P) \coloneqq \xi( y, P\cap B(y,r))$ for $r\ge 0$.

Consider the probability distribution function of $\xi$, when adding one additional point to the Poisson process $\cP$, viz.,
\begin{align}\label{E:Cdf}
	F(x) = \p( \xi(0,\cP\cup\{0\}) \le x ),
\end{align}
for $x\in\R$. We have 
$\p( \xi(y,\cP\cup\{y\}) \le x ) = \p( \xi(0,(\cP-y)\cup\{0\}) \le x ) = F(x)$ due to the translation invariance of the score functional and the Poisson process. So, the definition in \eqref{E:Cdf} is actually invariant under the choice of the additional point. This enables us to estimate $F$ by averaging over multiple observed scores (in a sense made precise below).

The ergodic behavior of this average is ensured by the following crucial property: The score functionals are strongly stabilizing in the sense that there is a \emph{radius of stabilization}, which is a measurable map $R: \R^d \times \dN\to\R$, with the property
\begin{align}\label{E:Stabilization1}
	\xi\big( y, (\cM\cup \{y\}\cup \cA ) \cap B(y,R(y,\cM\cup\{y\})) \big) =  \xi\big( y, \cM\cup \{y\}\cup \cA \big)
\end{align}
for all $y\in\R^d$, $\cM\in\dN$ and $\cA\subseteq \R^d$ finite.

We say the scores are \emph{exponentially stabilizing} on the Poisson process $\cP$ if there are $C_{stab},c_{stab},\as\in\R_+$ such that for $u \ge 0$,
\begin{align}\label{E:Stabilization2}
	\p( R(0, \cP\cup\{0\} ) \ge u) \le C_{stab} \exp( -c_{stab} u^{\as} ).
\end{align}

Apart from the distribution function $F$ given in \eqref{E:Cdf}, we also take into account the following variant, which captures second order effects. Let $Y$ be uniformly distributed on $B(0,\sqrt{d})$. Set
\begin{align}\label{E:DistributionExt}
	F^e(x) =	\p( \xi(0,\cP\cup\{0,Y\}) \le x)
\end{align}
for $x\in\R$. We will see later that the distribution function $F^e$ is only necessary in order to obtain a sharper result (improving the log-factor) in the Bahadur representation in  Theorem~\ref{T:BahadurPoissonUnif}. For more details we refer to the discussion following this theorem.

In order to establish the Bahadur representation for a given interval or point, we need that the distribution function $F$ admits a density $f$ which in turn admits a bounded derivative $f'$ in the corresponding neighborhood. Assuming that the distribution function $F^e$ has similar properties improves the rate in the Bahadur representation, see below.

In the following, let $W_n = [-2^{-1} n^{1/d}, 2^{-1} n^{1/d} ]^d$ be an observation window for  each $n\in\N$. We consider the restricted process $\cP_n = \cP|_{W_n}$. Similarly, throughout the manuscript let 
\begin{align}
\begin{split}\label{D:R}
	&r \coloneqq r_n  \coloneqq (c^* \log n)^{1/\as} \text{ for $n\in\N$ and for a constant } c^*  \ge 4/c_{stab}. 
\end{split}
\end{align}
The value of the constant $c^*$ can additionally depend on a specific statement, we clarify this where necessary. 

Define the smaller observation window
\[
	\Wc_n = [-(2^{-1} n^{1/d}-r), 2^{-1} n^{1/d} - r ]^d.
	\]
Then set $\cPc_n = \cP|_{\Wc_n}$.
We define $M_n = \# \cP_n$, resp. $\Mc_n = \# \cPc_n$ as the number of Poisson points inside $W_n$, resp. $\Wc_n$.

The condition in \eqref{E:Stabilization2} can be strengthened by requiring additionally: For all $n\in\N$, for all $y\in W_n$, for all $u\ge 0$
\begin{align}\label{E:Stabilization3}
	\p( R(y,\cP_n) \ge u ) \le C_{stab} \exp( - c_{stab} u^{\as} ).
\end{align}
Lachi{\`e}ze-Rey et al. \cite{lachieze2019normal} use a similar uniform condition on the stabilization of the scores $\xi$ and in many settings this stricter condition is satisfied.

We remark that the additional assumption \eqref{E:Stabilization3} is not necessary for the Bahadur representation in Theorem~\ref{T:BahadurPoissonUnif}. It is merely necessary in order to keep the radius $r$ growing at the minimal rate as in \eqref{D:R} independent of the dimension $d$ for the functional central limit theorem (Theorem~\ref{T:WeakConvergence}) and for the law of the iterated logarithm (Theorem~\ref{T:LILQuant}). Otherwise in these statements the constant $c^*$ additionally depends on the underlying dimension $d$.

When estimating the distribution function $F$ from a sample of Poisson points $\cP_n$, the radius of stabilization will be an important variable as the behavior of each score functional $\xi(y,\cP_n)$ crucially depends on the location of the Poisson point $y$, in particular on its proximity to the boundary of $W_n$. So, the value of a score, which is observed close to the boundary, is expected to be biased in a certain sense and it is quite natural to exclude such scores from the estimation procedure. For this purpose, we consider the following empirical distribution function which discards all points close to the boundary
\begin{align}\label{E:TrimmEcdf}
	\wh F_n\colon \R\to [0,1], x\mapsto 
	\frac{1}{\Mc_n} \sum_{y\in \cPc_n} \1{ \xi(y,\cP_n) \le x}.
\end{align}
Given that $\Mc_n =0$, the quotient $1/\Mc_n$ is $\infty$ but as the sum is empty, thus 0, the distribution function is $\wh F_n \equiv \infty \cdot 0 \equiv 0$ by the usual measure theoretic convention. In particular, the definition of $\wh F_n$ is meaningful.

Further, the event $\{ \Mc_n = 0\}$ has probability $\exp( - |\Wc_n| )$. So, the distribution function is non-trivial with a probability 
$1-\exp( - |\Wc_n| )$, in particular, the distribution function is eventually non-trivial for almost each state $\omega\in\Omega$. 
Finally, denote $\wh\psi_{p,n}$ the $p$-quantile of $\wh F_n$ for $p\in (0,1)$, i.e., $\wh\psi_{p,n} = \inf \{ t\in\R: \wh F_n(t) \ge p \}$.

Before, we start with the main results, we conclude this section with a result regarding the bias of $\wh F_n$.
\begin{theorem}[Bias of $\wh F_n$]\label{T:BiasWhF}
The bias of the estimator $\wh F_n$ vanishes polynomially. More precisely
$$
	\sup_{x\in\R} |\E[ \wh F_n(x) ] - F(x)| \lesssim  n^{-1/d} (\log n)^{1/\as} + n^{-1/2} (\log n)^{1/2} .
$$ 
\end{theorem}

\section{Main results}\label{Section_Results}

We state the first main result which is a uniform Bahadur representation for the sample quantile $\wh \psi_{p,n}$ of $\wh F_n$.

\begin{theorem}[Uniform Bahadur representation] \label{T:BahadurPoissonUnif}
Let the score functionals be translation invariant and exponentially stabilizing as in \eqref{E:Stabilization2} and let $r$ satisfy \eqref{D:R}.

Let $0<p_0\le p_1<1$ and $\epsilon>0$. Consider the interval $\fP=(\psi_{p_0} -\epsilon, \psi_{p_1}+\epsilon )$.
Let $\sup_{x\in \fP} |f'(x)| < \infty$ and let $\inf_{x\in \fP } f(x)>0$. 

\begin{itemize}
\item [(i)]
 Then for $\gamma=5/4$
\begin{align}\begin{split}\label{E:BahadurPoissonUnif0}
	&\sup_{p\in [p_0, p_1]} \Big| \wh \psi_{p,n} - \psi_p - \frac{p - \wh F_n(\psi_p)}{f(\psi_p)} \Big| \\
	&\qquad = O_{a.s.}\big( n^{-3/4} (\log n)^{\gamma+3d/(4\as)} \big).
\end{split}\end{align}
\\[-8pt]%
\item [(ii)]
Additionally assume that the distribution function $F^e$ from \eqref{E:DistributionExt} admits a density $f^e$ with $\sup_{x\in\fP} f^e <\infty$. Then \eqref{E:BahadurPoissonUnif0} is satisfied for $\gamma = 3/4$.
\end{itemize}
\end{theorem}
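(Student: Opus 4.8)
The strategy follows the classical Bahadur--Kiefer scheme of Bahadur \cite{bahadur1966note} and Kiefer \cite{kiefer1967bahadur}, adapted to the random sample size $\Mc_n$ and the weak dependence induced by stabilization. Write $B_n(p) = \wh\psi_{p,n} - \psi_p - (p - \wh F_n(\psi_p))/f(\psi_p)$. The heart of the matter is to control the oscillation of the empirical process $x \mapsto \wh F_n(x) - F(x)$ on an interval of width $O(n^{-1/2}(\log n)^{\kappa})$ around $\psi_p$, uniformly in $p \in [p_0,p_1]$. Concretely, since $|\wh\psi_{p,n} - \psi_p| = O_{a.s.}(n^{-1/2}(\log n)^{\kappa})$ for a suitable $\kappa$ by a first crude estimate (itself obtained from a uniform Glivenko--Cantelli/DKW-type bound for $\wh F_n$, proved via the stabilization-based decoupling described below), one has
\begin{align*}
 f(\psi_p)\,B_n(p) = -\big[(\wh F_n(\wh\psi_{p,n}) - F(\wh\psi_{p,n})) - (\wh F_n(\psi_p) - F(\psi_p))\big] + r_n(p),
\end{align*}
where $r_n(p)$ collects the error from replacing $F(\wh\psi_{p,n}) - F(\psi_p)$ by $f(\psi_p)(\wh\psi_{p,n}-\psi_p)$ (of size $O((\wh\psi_{p,n}-\psi_p)^2)$ via the bounded $f'$ on $\fP$, hence already within the target rate) plus the $O(1/\Mc_n)$ discretization jump. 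So everything reduces to bounding the modulus of continuity of the centered empirical process at scale $n^{-1/2}(\log n)^{\kappa}$.

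\textbf{Key steps.} First I would establish the needed concentration for $\wh F_n$. Partition $W_n$ into unit cubes $Q_z$, $z \in \Z^d$, and replace each score $\xi(y,\cP_n)$ by its truncated version $\xi_r(y,\cP_n) = \xi(y,\cP_n \cap B(y,r))$; by \eqref{E:Stabilization1}--\eqref{E:Stabilization2} and the choice \eqref{D:R} of $r = (c^*\log n)^{1/\as}$ with $c^* \ge 4/c_{stab}$, the probability that any point of $\cPc_n$ has radius of stabilization exceeding $r$ is $\lesssim M_n\,C_{stab} n^{-4}$, which is summable after taking expectations, so by Borel--Cantelli the truncated and untruncated empirical functions coincide eventually a.s. Now scores attached to Poisson points in cubes $Q_z, Q_{z'}$ with $\|z - z'\|_\infty > 2r$ depend on disjoint regions of $\cP$, hence are independent; this gives a block/dependency-graph structure with blocks of side $\asymp r$, i.e.\ $\asymp (\log n)^{d/\as}$ cubes. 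Applying an exponential inequality for sums over a dependency graph (or a martingale-difference argument along the lexicographic filtration $(\cF_z)_z$) yields, for fixed $x$,
\begin{align*}
 \p\big( |\wh F_n(x) - \E \wh F_n(x)| \ge t \,\big|\, \Mc_n \big) \lesssim \exp\!\big( - c\, t^2 \Mc_n / (\log n)^{d/\as} \big),
\end{align*}
and a bias estimate $|\E\wh F_n(x) - F(x)| \lesssim n^{-1}$-type smallness coming only from the truncation event (the restriction to $\Wc_n$ removes boundary bias exactly because of \eqref{E:Stabilization1}). A chaining/union bound over a fine grid in $x$, together with $\Mc_n \asymp n$ a.s., upgrades this to the uniform modulus-of-continuity bound: the oscillation of $\wh F_n - F$ over any window of length $\delta_n = n^{-1/2}(\log n)^{\kappa}$ is $O_{a.s.}(n^{-3/4}(\log n)^{\kappa/2 + d/(2\as)} \cdot (\log n)^{1/2})$, and optimizing the choice of $\kappa$ (which is governed by how sharply we can localize $\wh\psi_{p,n}$) produces the exponent $\gamma + 3d/(4\as)$ with $\gamma = 5/4$. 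The supremum over $p \in [p_0,p_1]$ is handled because $p \mapsto \psi_p$ is Lipschitz on $[p_0,p_1]$ (as $\inf_{\fP} f > 0$), so a grid of $O(n)$ values of $p$ suffices and the in-between error is absorbed.

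\textbf{Part (ii).} The improvement from $\gamma = 5/4$ to $\gamma = 3/4$ comes from replacing a crude bound on the \emph{second moment} of the increment $\wh F_n(\psi_p + h) - \wh F_n(\psi_p)$ by a sharp one. The conditional variance of $\Mc_n(\wh F_n(\psi_p+h) - \wh F_n(\psi_p))$ involves, besides the diagonal term $\asymp \Mc_n(F(\psi_p+h)-F(\psi_p)) \asymp \Mc_n h$, the covariance between scores at two nearby points $y,y'$; the latter is exactly what the extended distribution $F^e$ from \eqref{E:DistributionExt} encodes (adding a second uniform point $Y$ in $B(0,\sqrt d)$), and boundedness of its density $f^e$ gives that the ``two-point'' contribution to the increment variance is also $O(h)$ rather than $O(1)$. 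This linear-in-$h$ variance, fed into the exponential inequality, removes one full power of $(\log n)^{1/2}$ from the modulus bound and one more via sharper control of the localization radius $\delta_n$, yielding $\gamma = 3/4$.

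\textbf{Main obstacle.} The principal difficulty is not the Bahadur decomposition itself (which is formal once the empirical-process oscillation is controlled) but proving the \emph{uniform} exponential concentration with the explicit $(\log n)^{d/\as}$ dependence on the block size, while simultaneously keeping the discarded-boundary bias negligible and the sample size $\Mc_n$ random. One must carefully track how the radius $r = (c^*\log n)^{1/\as}$ enters both as the block diameter (costing $(\log n)^{d/\as}$ in the variance proxy) and as the truncation level (needing $c^* \ge 4/c_{stab}$ for Borel--Cantelli), and then interleave the $x$-chaining with the $p$-chaining without the logarithmic factors compounding beyond $(\log n)^{\gamma + 3d/(4\as)}$. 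The second-order refinement in (ii) is delicate precisely because it requires the \emph{sharp} constant in the increment variance, so the dependency-graph exponential inequality must be applied in its Bernstein form (variance-sensitive), not merely in Hoeffding form.
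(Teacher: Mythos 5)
Your architecture coincides with the paper's: localize $\wh\psi_{p,n}$ to a window of half-width $a_n\asymp n^{-1/2}(\log n)^{1/2+d/(2\as)}$ around $\psi_p$ via a uniform sup-norm bound on $\wh F_n-F$, reduce the Bahadur remainder to the oscillation of the centered empirical process over such windows plus a second-order Taylor term and an $O(1/\Mc_n)$ jump, and prove the oscillation bound by truncating the scores to radius $r$, splitting $\Z^d\cap W_n$ into $\asymp r^d$ sublattices of mutually independent blocks, applying Bernstein, and union-bounding over grids in $x$ and $p$. This is exactly the route of Propositions~\ref{P:ExpIneqPoisson} and \ref{P:UniformConv} and the proof of Theorem~\ref{T:BahadurPoissonUnif}.

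There is, however, a genuine gap precisely at the point that decides the exponent $\gamma$, namely the variance proxy fed into Bernstein. One needs a bound on $\E\big[\big(\sum_{y\in\cP\cap Q_z}\1{\xi_r(y,\cP_n)\in J}\big)^2\big]$ for an interval $J$ of length $h\le a_n$. You assert that without the $F^e$ hypothesis the off-diagonal (two-point) contribution is ``$O(1)$''; if that were the best available bound, the per-cube variance would be $O(1)$, the total variance proxy of order $n$, and Bernstein would only deliver an oscillation of order $n^{-1/2}$ up to logs --- the $n^{-3/4}$ rate of part (i) would be lost. What part (i) actually requires is the intermediate bound $\lesssim(\log n)\,h$, which the paper obtains (Lemma~\ref{Lemma3}) by capping the per-cube point count at $q\asymp\log n$ and applying Slivnyak--Mecke, the event $\{\cP(Q_z)>q\}$ being killed by Poisson concentration. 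That single power of $\log n$ in the variance is exactly the extra $(\log n)^{1/2}$ separating $\gamma=5/4$ from $\gamma=3/4$, and your sketch contains neither this idea nor consistent bookkeeping: the modulus bound you write for part (i), with exponent $\kappa/2+d/(2\as)+1/2$ and $\kappa=1/2+d/(2\as)$, evaluates to $(\log n)^{3/4+3d/(4\as)}$, i.e.\ the part (ii) rate, contradicting your own claim that it yields $\gamma=5/4$. Relatedly, in part (ii) the localization radius is \emph{not} sharpened --- the crude bound on $\sup_x|\wh F_n(x)-F(x)|$ does not improve under the $f^e$ hypothesis, so $a_n$ is identical in both parts and the total gain is exactly one factor of $(\log n)^{1/2}$, not the two you claim. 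The mechanism you identify for part (ii) (a two-point Mecke computation with an added uniform point $Y$, with boundedness of $f^e$ making the off-diagonal term $O(h)$, as in Lemmas~\ref{Lemma5} and \ref{Lemma6}) is the correct one.
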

The additional requirements in part (ii) of Theorem~\ref{T:BahadurPoissonUnif} lead to an improvement of order $(\log n)^{-1/2}$ compared to part (i). From the technical perspective the improvement is due to the fact that the additional assumptions allow to bound above the second moment of a sum of local score functions of the type 
$$
	\E\Big[ \Big(\sum_{y\in \cP \cap Q_0} \1{\xi(y,\cP)\in I} \Big)^2 \Big]
$$
by a constant times the interval length of $I$, for $I$ being a bounded interval and $Q_0 = [-1/2,1/2]^d$. Since the location of the Poisson points $y$ of the scores $\xi(y,\cP)$ is random, we have to restrict ourselves to cubes of the type $Q_0$ in order to deal with the spatial dependence among the single scores. Hence, we always deal with a certain minimal interaction between score functionals. This is a major difference to discrete time series data $(X_k)_{k\in\Z}$ which comes naturally with a minimal granularity as it is indexed by the integers and thus, allows easier for splits.

In many applications the constant $\alpha_{stab}$ equals $d$, so that the exponent of the log-factor is either 2 in (i) or $3/2$ in (ii). It does not seem possible to remove the log-factor completely for the benefit of a log-log-factor given the present assumption of exponentially stabilizing score functionals. In the case of nonlinear time series data which satisfies a geometric-moment contraction, Wu \cite{wu2005bahadur} obtains the upper bound $O_{a.s.}(n^{-3/4} (\log n)^{3/2})$, too.

The main technical difference of our setting to that of Wu is that we have as well randomness in location of points, which is still present after a reduction to $m$-dependent score functionals. This spatial randomness is the reason, why we cannot rely in the proofs on classical uniform concentration for distribution functions.

\subsection{The density condition for the score functional $\xi$}

The Bahadur representation in Theorem~\ref{T:BahadurPoissonUnif} crucially relies on the density of the typical score $\xi(0,\cP\cup\{0\})$ and the variant $\xi(0,\cP\cup \{0,Y\})$, where $Y$ is uniformly distributed on $B(0,R)$ for some $R\in\R_+$.

First, we discuss relations between the scores $\xi(0,\cP\cup \{0,Y\})$ and $\xi(0,\cP\cup\{0\})$. Second, we present two classical examples. The first concerns scores on the $k$-nearest neighbors graph, the second scores on Poisson-Voronoi tessellations.

Set $\lambda = |B(0,R)| = w_d R^d$, where $w_d$ is the $d$-dimensional volume of the unit ball $B(0,1)$. For $j\in\N$ let $\mX_j$ be an iid sample of $j$ points on $B(0,R)$ having a uniform distribution and let $\cPd = \cP |_{B(0,R)^c}$. Obviously, we have for $I=(a,b)$
\begin{align}
		\p( \xi(0,\cP\cup\{0\}) \in I ) &= \sum_{j\ge 0} p_j q_j(I), \label{E:DistrXi}\\
		\begin{split}\label{E:DistrXi2}
		\p( \xi(0,\cP\cup\{0,Y\}) \in I ) &= \sum_{j\ge 0} p_j q_{j+1}(I) = \sum_{j\ge 0} \frac{j+1}{\lambda} p_{j+1} q_{j+1}(I)\\
		&= \sum_{j\ge 0} \frac{j}{\lambda} p_{j} q_{j}(I), 
		\end{split}
\end{align}
where $	p_j = e^{-\lambda} \frac{ \lambda^j }{j!}$ and $q_j(I) = \p( \xi(0, \{0\}\cup \mX_j  \cup \cPd ) \in I )$.

On the one hand, provided that $\xi(0,\cP\cup\{0\})$ admits a bounded density, $f$ say, it follows from \eqref{E:DistrXi} that each distribution $q_j$ also admits a density, $f_j$ say. Moreover, applying the Cauchy-Schwarz to \eqref{E:DistrXi2}, yields that $\xi(0,\cP\cup\{0,Y\})$ admits a density too, because we obtain then inequalities of the type
\begin{align*}
	\p( \xi(0,\cP\cup\{0,Y\}) \in I ) &\le \Big( \sum_{j\ge 0} \frac{j^2}{\lambda^2} p_j  \Big)^{1/2} \ \Big( \sum_{j\ge 0} p_j q_j(I) \Big)^{1/2} \\
	&= \sqrt{ \frac{1+\lambda}{\lambda}} \ \p( \xi(0,\cP\cup\{0\}) \in I )^{1/2}.
\end{align*}
However, owing to the additional factor $j$ in \eqref{E:DistrXi2}, it is not immediately clear whether or not this density is as well.

On the other hand, if there is a $\beta\in\R_+$ such that $0<q_j(I) \le \beta (1+j ^\beta) |I|$ for all $j\in\N_0$ and all intervals $I\subseteq \R$, then both scores admit a positive and bounded density function by the Radon-Nikod{\'y}m theorem. The polynomial bound on the conditional probabilities $q_j$ appears to be satisfied in many situations. We illustrate it below for statistics on the $k$-nearest neighbors graph.\\[5pt]

\textit{$k$-nearest neighbors graph.} 
Consider a configuration $\cX\in \dN$, $k\in\N$ and $x\in\cX$. The set of the $k$-nearest neighbors of $x$ in $\cX$ is $\fV_k(x,\cX)$, i.e., the $k$ points closest to $x$ in $\cX\setminus \{x\}$. We order the elements of $\fV_k(x,\cX)$ according to their distance to $x$ as follows $\|V_1(x,\cX)-x\| \le \|V_2(x,\cX) - x\|\le \ldots \le \|V_k(x,\cX) - x\|$. 

The (undirected) $k$-nearest neighbors graph with vertex set $\cX$ is obtained by including an edge $\{x,y\}$ if and only if $y\in \fV_k(x,\cX)$ or $x\in \fV_k(y,\cX)$. 

Consider the scores 
\begin{align*}
	\xi(x,\cX) &= \sum_{y\in \cX} \| x - y \| \1{ y\in \fV_k(x,\cX) }, \\
	\wt \xi(x,\cX) &= \|x-V_k(x,\cX) \|,
\end{align*} 
which give the total distance to the $k$-nearest neighbors resp.\ the distance to the $k$th-nearest neighbor of an $x\in\cX$. It is well-known that these functionals (and other functionals defined on the $k$-nearest neighbors graph) stabilize exponentially fast on a homogeneous Poisson process (with unit intensity on $\R^d$) in the sense of \eqref{E:Stabilization2} and \eqref{E:Stabilization3}, see the arguments given in \cite{penrose2001central, krebs2021LIL}. One has that $\as = d$, see also \cite{lachieze2019normal}.

We consider in the following the distribution of $\xi(0,\cP\cup \{0\} )$ and $\wt \xi(0,\cP\cup \{0\} )$. We write for short $V_i = V_i(0,\cP\cup \{0\})$. In order to derive the joint density of $(\|V_1\|,\ldots, \|V_k\|)$, it is sufficient to consider disjoint intervals
\[
	 [s_1, s_1+h_1], [s_2, s_2+h_2], \ldots, [s_k, s_k+h_k] 
\]
for $s_{i-1}+h_{i-1} < s_i$ and $\max_{1\le i\le k} h_i \downarrow 0$. (The probability that the modulus of two or more neighbors of 0 falls in the same interval $[s,s+\diff h)$ is zero by the properties of the Poisson distribution.) We have
\begin{align*}
	&\p( \|V_1\| \in [s_1,s_1+h_1], \|V_2\| \in [s_2,s_2+h_2], \ldots, \|V_k\| \in [s_k, s_k+h_k] )  \\
	&= \p( \cP( B(0,s_1) ) = 0) \cdot \p( \cP( B(0,s_1+h_1) \setminus B(0,s_1) ) = 1)   \\
	&\qquad \cdot \p( \cP( B(0,s_2) \setminus B(0,s_1+h_1) ) = 0) \cdot \p( \cP( B(0,s_2+h_2) \setminus B(0,s_2) ) = 1) \cdot \ldots \\
	&\qquad \ldots \cdot \p( \cP( B(0,s_{k-1}) \setminus B(0,s_{k-2}+h_{k-2}) ) = 0) \cdot \p( \cP( B(0,s_{k-1}+h_{k-1}) \setminus B(0,s_{k-1}) ) = 1) \\
	&\qquad \cdot \p( \cP( B(0,s_{k}) \setminus B(0,s_{k-1}+h_{k-1}) ) = 0) \cdot \p( \cP( B(0,s_k+h_k) \setminus B(0,s_k) ) \ge 1).
\end{align*}
And this equals
\begin{align*}
	& e^{-w_d s_1^d} \cdot \Big\{ w_d \big( (s_1+h_1)^d - s_1^d \big) e^{-w_d( (s_1+h_1)^d - s_1^d)} \Big\} \\
	& \cdot  e^{-w_d (s_2^d - (s_1+h_1)^d) } \cdot \Big\{ w_d \big( (s_2+h_2)^d - s_2^d \big) e^{-w_d( (s_2+h_2)^d - s_2^d)} \Big\} \cdot \ldots \\
	&  \ldots \cdot  e^{-w_d (s_{k-1}^d - (s_{k-2}+h_{k-2})^d) } \cdot \Big\{ w_d \big( (s_{k-1}+h_{k-1})^d - s_{k-1}^d \big) e^{-w_d( (s_{k-1}+h_{k-1})^d - s_{k-1}^d)} \Big\} \\
	& \cdot  e^{-w_d (s_k^d - (s_{k-1}+h_{k-1})^d) } \cdot \Big\{ 1 - e^{-w_d( (s_k+h_k)^d - s_k^d)} \Big\}.
\end{align*}
Consequently, the joint density of $(\|V_1\|,\ldots,\|V_k\|)$, which is supported on the set $\{ (s_1,\ldots,s_k) \subseteq \R^k: s_{i} < s_{i+1} \ \forall i\in \{1,\ldots,k-1\} \}$, is  
\begin{align*}
	&f(s_1,s_2,\ldots,s_k) \\
	&= e^{-w_d s_k^d } \ (w_d d)^k \ \Big( \prod_{i=1}^k s_i \Big) ^{d-1} \ \1{ s_1 < s_2 < \ldots < s_k}.
\end{align*}
In particular, the functional $\xi(0,\cP\cup\{0\})$ admits a continuous, a.e. positive and bounded density on $\R_+$ and, thus, satisfies the continuity assumption necessary for the Bahadur representation given in Theorem~\ref{T:BahadurPoissonUnif} (i).

We remark that one can infer from $f$ the joint density of the $k$-nearest neighbors $(V_1,\ldots,V_k)$ by choosing for a given realization $(s_1,\ldots,s_k)$ of the vector $(\|V_1\|,\ldots,\|V_k\|)$ for each $V_i$ a position on the $(d-1)$-sphere with radius $s_i$ according to a uniform distribution. However, we do not need this in the sequel.

We are rather interested in the continuity properties of the functional when adding another additional point $Y$ to $\cP$ which is uniformly distributed on $B(0,R)$ for some radius $R>0$. To simplify the analysis we consider here the functional $\wt\xi$, which only focuses on the $k$th-nearest neighbor.

In the above Poisson model, the $k$th-nearest neighbor has a probability distribution as follows.
\begin{align*}
	&\p( \|V_k\| \in [s,s+h]) = \p( \wt\xi( 0, \cP\cup \{0\}) \in [s,s+h]) \\
	&= \sum_{u=0}^{k-1} \p( \cP(B(0,s))= u, \cP( B(0,s+h) \setminus B(0,s) ) \ge k - u ) \\
	&= \sum_{u=0}^{k-1}  \frac{(w_d s^d)^{u} }{u!} e^{-w_d s^d} \  \sum_{\ell=k-u}^{\infty} \frac{(w_d( (s+h)^d - s^d))^\ell}{\ell!} e^{-w_d( (s+h)^d - s^d)}.
\end{align*}
Consequently, the density of $\wt\xi(0,\cP\cup\{0\})$ is
\begin{align*}
	g(s) &= \frac{(w_d s^d)^{k-1}}{(k-1)!} w_d d s^{d-1} e^{-w_d s^d} \1{s > 0} \\
	&= \frac{d\ w_d^k}{(k-1)!}  s^{dk-1} e^{-w_d s^d} \1{s > 0} .
\end{align*}
In particular, $g$ is continuous, a.e. positive and bounded on $\R_+$ and has a derivative $g'$ that is also bounded on $\R_+$. Consequently, the assumptions of Theorem~\ref{T:BahadurPoissonUnif} (i) are satisfied for this score functional.

Next, we show that the requirements in part (ii) of this theorem are also satisfied. For this purpose we consider the distribution of $\wt\xi(0,\cP\cup\{0,Y\})$, where $Y$ is uniformly distributed on $B(0,R)$. For this purpose, we use the notation from the beginning. Let $\cPd = \cP|_{B(0,R)^c}$ and let $\mX_j=\{X_1,\ldots,X_j\}$ be an iid sample of $j$ points on $B(0,R)$. We study the densities of $\wt\xi(0,\{0\}\cup \mX_j  \cup \cPd)$ for $j\in\N_0$. Plainly, the distance of 0 to the $k$th-nearest neighbor depends on $j$. 

If $j\ge k$, the $k$th-nearest neighbor lies inside $B(0,R)$. Hence, for $s<R$
\begin{align*}
	&\p( \wt\xi( 0, \{0\}\cup \mX_j \cup \cPd) \in [s,s+h] ) \\
	&= \sum_{u=0}^{k-1} \sum_{\ell = k-u}^{j-u} \p\Big( \{\text{$u$ $X_i$ are in $B(0,s)$} \} \cap \{\text{$\ell$ $X_i$ are in $B(0,s+h)\setminus B(0,s)$} \} \\
	&\qquad\qquad\qquad \cap \{\text{$(j-u-\ell)$ $X_i$ are in $B(0,R)\setminus B(0,s+h)$} \} \Big).
\end{align*}
And this last double sum is equal to
\begin{align*}
	& \sum_{u=0}^{k-1} \sum_{\ell = k-u}^{j-u} \frac{ j! }{ u! \ell! (j-u-\ell)! } \\
	&\qquad\qquad\quad \Big(\frac{ s^d }{ R^d}\Big)^{u} \Big(\frac{ (s+h)^d - s^d }{ R^d}\Big)^{\ell} \Big(\frac{ R^d - (s+h)^d }{ R^d}\Big)^{j-u-\ell} .
\end{align*}
So, for $j\ge k$ the corresponding density of $\wt\xi( 0, \{0\}\cup \mX_j \cup \cPd)$ on $\R^d$ is
\begin{align*}
	g_j(s) = \frac{j!}{ (k-1)! (j-k)!} \Big(\frac{ s^d}{R^d}\Big)^{k-1} d \frac{ s^{d-1}}{R^d} \Big(\frac{ R^d - s^d }{ R^d}\Big)^{j-k} \1{s\in (0,R)}.
\end{align*}
Note that $\sup_{s\in (0,R)} g_j(s) \le C j^k$ if $j\ge k$ for some $C\in\R_+$.

If $j<k$, the $k$th-nearest neighbor is outside of $B(0,R)$. Let $s > R$, then
\begin{align*}
	&\p( \wt\xi( 0, \{0\}\cup \mX_j \cup \cPd) \in [s,s+h] ) \\
	&= \sum_{u=0}^{k-1 - j} \sum_{\ell = k-u-j}^\infty \p( \{ \cPd( B(0,s)\setminus B(0,R)) = u \} \\
	&\qquad\quad \cap \{ \cPd( B(0,s+h) \setminus B(0,s)) = \ell \} ) \\ 
	&= \sum_{u=0}^{k-1 - j} \sum_{\ell = k-u-j}^\infty  \frac{(w_d(s^d-R^d))^{u}}{u!} e^{-w_d (s^d - R^d)} \\ &\qquad\qquad\qquad\qquad \frac{(w_d((s+h)^d-s^d))^{\ell}}{\ell!} e^{-w_d ((s+h)^d - s^d)} .
\end{align*}
So, for $j< k$ the corresponding density of $\wt\xi( 0, \{0\}\cup \mX_j \cup \cPd)$ on $\R^d$ is
\begin{align*}
	g_j(s) = \frac{(w_d(s^d - R^d))^{k-1-j}}{(k-1-j)!} e^{-w_d(s^d-R^d)} \ w_d d s^{d-1}   \1{s\in (R,\infty)}.
\end{align*}
Clearly, each $g_j$ is continuous and bounded and admits a bounded derivative $g'_j$.  Moreover, letting $\lambda = |B(0,R)|$, we have for the density of the score $\wt\xi(0,\cP\cup\{0\})$
\[
	g \equiv \sum_{j\ge 0} e^{-\lambda} \frac{\lambda^j}{j!} g_j.
\]
The density of the score $\wt\xi( 0, \cP\cup\{0,Y\} )$ is
\[
	\ol g = \sum_{j\ge 0} e^{-\lambda} \frac{\lambda^j}{j!} g_{j+1}.
\]
One finds that as well $\ol g$ is a.e. positive, bounded and continuous on $\R_+$ with a bounded derivative $\ol g'$.\\[5pt]

\textit{Poisson-Voronoi tessellations.}
Many deep results have been obtained for Voronoi tessellations with a particular emphasis when the underlying point process is a stationary Poisson process; for a short and incomplete list we mention -- apart from the results used below -- the contributions \cite{miles1982basic,brakke1987statistics,hayen2002areas,devroye2017measure}. However, only few results are available when it comes to exact distributional properties of statistics based on the Poisson-Voronoi tessellation.

The following example is based on the work of Zuyev \cite{zuyev1992estimates} and Hug et al. \cite{hug2004large}, who consider the limiting behavior of large Voronoi cells on a Poisson input.

The exponential stabilization of the Voronoi tessellation on the homogeneous Poisson process $\cP$ is well-known, for examples illustrating the planar case of dimension 2, see \cite{mcgivney1999asymptotics, penrose2001central}. One can follow the arguments of Penrose \cite{penrose2007laws} to see that the exponent is $\as = d$.

Given $\cP$, denote $Z = C(0, \cP\cup \{0\} )$ the typical Voronoi cell. We measure the deviation of $Z$ from a ball with center $0$ with the following statistic
\[
	\delta(Z) = \frac{R_0 - \rho_0}{R_0 + \rho_0},
\]
where $R_0$ is the radius of the smallest ball with center 0 containing $Z$ and $\rho_0$ is the radius (possibly 0) of the largest ball with center 0 contained in $Z$.
The volume of $Z$ is $v_d(Z)$.

In the following, let $\epsilon\in (0,1)$ and let $\sigma_0>0$. We consider the score functional
\begin{align*}
	\xi(0,\cP\cup \{0\}) &= v_d( Z) \1{ \delta(Z) \ge \epsilon }.
	\end{align*}
Following Hug et al. \cite{hug2004large}, we have for $h\in (0,1/2)$ and $a\ge \sigma_0>0$
\begin{align*}
	\p( \xi(0,\cP\cup \{0\}) \in (a,a+h) ) =
	&\p( v_d(Z) \in a(1,1+h), \delta(Z) \ge \epsilon) \\
	&\le c_1 h \exp \big\{ - \big( 1+c_2 \epsilon^{(d+3)/2} \big) 2^d  a \big\} 	
\end{align*}
for two constants $c_1,c_2$, which depend on $\sigma_0,\epsilon$ and $d$ but not on $a,h$.
In particular, $\xi(0,\{0\}\cup \cP)$ admits a density on $\R_+$, which is bounded on compact intervals.

Another interesting statistic is the volume of the \textit{fundamental region} of $Z$. This statistic dates back at least to Zuyev \cite{zuyev1992estimates}, see also this contribution for an illustration of the fundamental region. Let $Z$ be formed by the Poisson points $Y_1,\ldots,Y_M$, where the number $M$ is random. 

For each 0-face of $Z$ there are $d+1$ points $\{0,Y_{i_1},\ldots,Y_{i_d}\}$ which are equidistant to it. So for each 0-face there is a ball centered at it with $\{0,Y_{i_1},\ldots,Y_{i_d}\}$ on its boundary and no Poisson points in it.  Each $Y_i$ ($1\le i\le M$) lies in the intersection of $d$ such balls. The union of all these balls for all points forming $Z$, is the fundamental region of $Z$, $\cR(Z)$. We consider the score
\[
		\ol \xi(0, \cP\cup\{0\}) = v_d( \cR(Z)).
\]	

Conditional on the event that the number of hyperfaces of $Z$ is $m$, Zuyev \cite{zuyev1992estimates} finds that $v_d( \cR(Z))$ follows a $\Gamma(m,1)$-distribution (remember that $\cP$ has unit intensity) with density 
\[
	h_m(s) = ( (m-1)! )^{-1} s^{m-1} \exp(-s),
\]
with $h_m$ being defined for all $m\in\N_+$ in the following.

In particular, the density of $\ol \xi(0, \cP\cup\{0\})$, $h$ say, satisfies
\[
		0 < h = \sum_{m\ge d+1} p_m h_m,
\]	
where for $m \ge d+1$
\[
	p_m = \p( Z \text{ has $m$ hyperfaces} ) >0.
\]
Clearly, $h$ is locally bounded away from 0 and $\infty$ because for each $s>0$
\[
	0< p_{d+1} h_{d+1}(s) \le h(s) \le \sup_{m\ge d+1} h_m(s) = e^{-s} \sup_{m\ge d+1} \frac{ s^{m-1}}{(m-1)!}\le 1.
\]
Moreover, the derivative $h'$ satisfies for each $s>0$
\begin{align*}
	h'(s) &= \sum_{m\ge d+1} p_m h_{m-1}(s)\Big(1-\frac{s}{m-1}\Big) \\
		&= \sum_{m\ge d+1} p_m h_{m}(s)\Big(\frac{m-1}{s}-1\Big).
\end{align*}
Thus, $h'$ is locally bounded because 
\begin{align*}
	| h'(s)| &\le \sum_{m\ge d+1} p_m \min \Big\{ h_{m-1}(s)\Big|1-\frac{s}{m-1}\Big|, h_{m}(s)\Big|\frac{m-1}{s}-1\Big| \Big\}\\
	&\le \sum_{m\ge d+1} p_m \big(h_{m-1}(s) \1{s\le m-1} + h_m(s) \1{s > m-1} \big) \\
	&\le \sum_{m\ge d+1} \big[ p_m \sup_{m\ge d} h_m(s) \big] \le \sup_{m\ge d} h_m(s)\le 1.
\end{align*}
Hence, the score functional $\ol\xi$ satisfies the assumptions of Theorem~\ref{T:BahadurPoissonUnif} (i).

\section{Applications}\label{Section_Applications}

We begin with a general result on the weak convergence of the empirical distribution function of the observed scores.
\begin{theorem}[Weak convergence in $D$]\label{T:WeakConvergence} Let the score functional be translation invariant and exponentially stabilizing as in \eqref{E:Stabilization2}. Let the choice of $r$ satisfy \eqref{D:R} such that additionally the condition in \eqref{E:Stabilization3} is satisfied or that additionally $c^* > (2d+1+1/d)/c_{stab}$.
\begin{itemize}
	\item [(i)] Let $I=[a,b]\subseteq \R$. Let $F$ be differentiable on $I$ with $\sup_{x\in I} f(x)<\infty$. Then the sequence $( \sqrt{n}(\wh F_n(x) - F(x)): x\in I)_n$ converges weakly to some centered Gaussian process $(W(x):x\in I)$ in the Skorokhod space $(D_I,\cD_I)$.
	
	Moreover, $W$ has a continuous modification which has H{\"o}lder continuous sample paths of any exponent $\beta\in (0,1/2)$. \\[-4pt] 
	\item [(ii)] Let $0<p_0\le p_1<1$ and let $I=[\psi_{p_0}, \psi_{p_1}]$. Assume that the conditions of Theorem~\ref{T:BahadurPoissonUnif} (i) are satisfied for the extended interval $(\psi_{p_0}-\epsilon,\psi_{p_1}+\epsilon)$ for some $\epsilon>0$. Then $(\sqrt{n}(\wh \psi_{p,n} - \psi_p): p_0 \le p \le p_1)_n$ converges weakly to $(W(x)/f(x) :x\in I)$ in $(D_I,\cD_I)$.
\end{itemize}
\end{theorem}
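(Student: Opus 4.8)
\textbf{Part (i).} The plan is the standard route for proving weak convergence in $D_I$: establish convergence of finite-dimensional distributions and then tightness. For the finite-dimensional distributions, fix $x_1,\dots,x_k\in I$ and consider the vector $(\sqrt n(\wh F_n(x_j)-F(x_j)))_{j\le k}$. The first step is to replace $\wh F_n(x)$, which involves the scores $\xi(y,\cP_n)$ computed from the full restricted process, by the \emph{stabilized} scores $\xi_r(y,\cP)=\xi(y,\cP\cap B(y,r))$ with the truncation radius $r=r_n$ from \eqref{D:R}; on the event $\{R(y,\cP_n)\le r\}$, which by \eqref{E:Stabilization2} (or \eqref{E:Stabilization3}) fails with probability at most $C_{stab}n^{-c^*c_{stab}}$ per point, the two agree, and the choice of $c^*$ makes the total discrepancy over the $O(n)$ points of order $o_{a.s.}(n^{-1/2})$, hence negligible after multiplication by $\sqrt n$. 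One also replaces $1/\Mc_n$ by $1/|\Wc_n|\sim 1/n$ at a cost of order $O_{a.s.}(n^{-1/2}\cdot\sqrt{\log\log n/n})$ coming from the LLN/LIL for the Poisson count $\Mc_n$, again negligible (this is where the classical strong law for Poisson processes enters). After these reductions, $\sqrt n(\wh F_n(x)-F(x))$ is, up to $o_{a.s.}(1)$ errors, a normalized sum $n^{-1/2}\sum_{y\in\cPc_n}(\1{\xi_r(y,\cP)\le x}-F(x))$ of a functional of $\cP$ that is $r_n$-local, i.e.\ depends only on $\cP$ within distance $r_n$ of the box. A central limit theorem for such weakly dependent, slowly growing-range geometric functionals on a Poisson process then yields joint asymptotic normality; this is exactly the kind of CLT referenced in the introduction (\cite{penrose2001central,penrose2003weak,baryshnikov2005gaussian,lachieze2017new}), applied coordinate-wise and then via the Cramér–Wold device to the vector, with the limiting covariance $\Sigma(x_i,x_j)=\lim_n n^{-1}\operatorname{Cov}(\sum_{y\in\cPc_n}\1{\xi_r(y,\cP)\le x_i},\sum_{y\in\cPc_n}\1{\xi_r(y,\cP)\le x_j})$, whose existence follows from translation-covariance and exponential stabilization. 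This identifies the candidate covariance function of $W$.

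For tightness in $(D_I,\cD_I)$ I would use the standard moment criterion for the Skorokhod topology (Billingsley, \cite{billingsley1968convergence}): it suffices to bound, for $a\le x\le y\le z\le b$,
\[
\E\big[\,|\sqrt n(\wh F_n(y)-\wh F_n(x))|^2\,|\sqrt n(\wh F_n(z)-\wh F_n(y))|^2\,\big]\le C\,(F(z)-F(x))^2
\]
(with the usual $(H(z)-H(x))^{2\beta}$ variant for a continuous increasing $H$, here $H=F$). The left side, after the same localization reduction, is a fourth-moment quantity for a sum of indicators $\1{\xi_r(y,\cP)\in(x,y]}$; here one uses that the scores have a bounded density $f$ on $I$, so $\E[\1{\xi_r(y,\cP)\in(x,y]}]\lesssim |F(y)-F(x)|$, together with the exponential clustering of the $r_n$-local scores to control the mixed fourth moment by (sum of squared increments)$^2$, which is $\lesssim(F(z)-F(x))^2$. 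Because $r_n\to\infty$, one has to be slightly careful: the "$m$-dependence range" $r_n$ grows, so the number of $y$'s within range of a given $y$ is $O(r_n^d)=O((\log n)^{d/\as})$; but in the normalized fourth moment this polylog factor is dominated, since each pairing already gains a factor $n^{-1}$ against the $O(n)$ lattice sum. This polylogarithmic bookkeeping, and ensuring the constant in the tightness bound does not blow up with $n$, is the main technical nuisance. Continuity of $F$ on $I$ gives that the limit has continuous paths; the stated Hölder regularity of exponent $\beta<1/2$ of the continuous modification of $W$ then follows from the Gaussianity of $W$ together with the increment bound $\E[(W(y)-W(x))^2]\le\Sigma$-increment $\lesssim|F(y)-F(x)|\lesssim|y-x|$ (using Lipschitzness of $F$, a consequence of the bounded density) via the Kolmogorov–Chentsov theorem for Gaussian processes.

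\textbf{Part (ii).} This is the quantile process counterpart and I would derive it from part (i) by the Bahadur representation of Theorem \ref{T:BahadurPoissonUnif}(i). On $I=[\psi_{p_0},\psi_{p_1}]$, \eqref{E:BahadurPoissonUnif0} gives, uniformly in $p\in[p_0,p_1]$,
\[
\sqrt n(\wh\psi_{p,n}-\psi_p)=-\frac{\sqrt n(\wh F_n(\psi_p)-p)}{f(\psi_p)}+O_{a.s.}\big(n^{-1/4}(\log n)^{5/4+3d/(4\as)}\big),
\]
and since $p=F(\psi_p)$ on this interval, the main term is $-f(\psi_p)^{-1}\,\sqrt n(\wh F_n(\psi_p)-F(\psi_p))$, i.e.\ the process from part (i) evaluated at $x=\psi_p$, composed with the fixed homeomorphism $p\mapsto\psi_p$ of $[p_0,p_1]$ onto $I$ and multiplied by the deterministic continuous positive function $-1/f(\psi_p)$; the remainder is $o_{a.s.}(1)$ uniformly in $p$. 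Weak convergence in $D$ is preserved under this continuous transformation (the deterministic reparametrization and multiplication by a continuous function are continuous maps $D_I\to D_I$ in the Skorokhod topology when the limit has continuous paths) and under addition of a uniformly $o_{a.s.}(1)$ remainder, so $(\sqrt n(\wh\psi_{p,n}-\psi_p))_{p\in[p_0,p_1]}$ converges weakly in $(D_I,\cD_I)$ to the corresponding transform of $W$. Restating this transformed Gaussian limit as a process "$(W(x):x\in I)$" (now the limiting quantile-process, with covariance $f(\psi_s)^{-1}f(\psi_t)^{-1}\Sigma(\psi_s,\psi_t)$) gives the claim. The only point requiring care is that the limit in (i) has continuous sample paths, which we established there, so that the change-of-variables map is a.s.\ continuity point — this is exactly why the continuity assertion in (i) is included.

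\textbf{Main obstacle.} The crux is the localization-plus-CLT step in part (i): justifying that $\wh F_n$ built from the non-local, location-randomized scores $\xi(y,\cP_n)$ can be replaced by an $r_n$-local functional with an error that is $o(n^{-1/2})$ after normalization, and then invoking (or re-deriving, in the slowly-growing-range regime $r_n=(c^*\log n)^{1/\as}$) a functional CLT with control of the mixed fourth moments for the tightness bound. The spatial randomness of the Poisson locations, emphasized in the discussion after Theorem \ref{T:BahadurPoissonUnif}, is precisely what prevents using off-the-shelf empirical-process tightness and forces the moment computation to be done by hand with the geometric clustering estimates; keeping the polylogarithmic range factors from accumulating is the delicate part.
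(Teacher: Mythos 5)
Your overall architecture (finite-dimensional distributions plus a moment-criterion tightness bound for part (i), then part (ii) via the uniform Bahadur representation and a continuous change of variables) is the same as the paper's, and your treatment of part (ii) is essentially identical to — indeed slightly more careful than — the paper's one-line derivation, since you correctly note that the limit of the quantile process is the transform $-W(\psi_p)/f(\psi_p)$ rather than $W$ itself. The routes diverge in two places. For the fidis, you invoke an off-the-shelf stabilization CLT after truncating to $r_n$-local scores and applying Cram\'er--Wold; the paper instead writes $\sum_{y\in\cPc_n}\Psi(y,\cP_n)-\E[\cdots]$ as a sum of martingale differences $\E[\Delta(z,n)\mid\cF_z]$ over the lexicographically filtered lattice, approximates these by their infinite-volume limits $\Delta(z,\infty)$ (Proposition~\ref{P:Approximation}), and applies the CLT/LIL machinery of \cite{krebs2021LIL} (Proposition~\ref{P:AbstractLIL}). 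Either route is viable; the paper's pays for the martingale setup once and reuses it for the LIL, and it also delivers the explicit covariance via $\Delta_t(0,\infty)$. For tightness you use the Billingsley product-of-adjacent-increments criterion, while the paper uses the Bickel--Wichura single fourth-moment bound $n^2\,\E[|\wh F_n(x)-F(x)-\wh F_n(u)+F(u)|^4]\le C|x-u|^{3/2}$ after first discretizing to a mesh-$1/n$ grid (the discretization is not optional: the raw bound fails for $|x-u|\ll 1/n$ because of the $O(n^{-2})$ error terms, and you should make the grid reduction explicit).

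The one genuine gap is your claimed tightness bound with right-hand side $(F(z)-F(x))^2$. Under the hypotheses of part (i) you only know $\sup_I f<\infty$; nothing is assumed about the two-point distribution $F^e$. The obstruction, emphasized in the discussion after Theorem~\ref{T:BahadurPoissonUnif}, is the second moment of a cluster sum such as $\sum_{y\in\cP\cap Q_z}\1{\xi(y,\cP)\in(u,x]}$: two distinct Poisson points of the same unit cube can both have scores in the small interval, and bounding the probability of this joint event linearly in $|x-u|$ requires precisely the density assumption on $F^e$ (Lemma~\ref{Lemma5}), which is not available here. Without it, the paper's Lemma~\ref{Lemma4} only yields, via H\"older with exponent $p>1$, a bound of order $\p(\xi_r(0,\cP\cup\{0\})\in(u,x])^{1/p}\lesssim|x-u|^{1/p}$ for the squared increment of each martingale difference; taking $p=4/3$ and squaring gives the exponent $3/2$ in the fourth-moment bound. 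So the product bound you assert with exponent $2$ is not attainable in this generality; the repair is to settle for exponent $3/2$ (any exponent $>1$ suffices for the Bickel--Wichura/Billingsley criteria), obtained through the H\"older device of Lemma~\ref{Lemma4}. The same correction propagates to your H\"older-continuity argument for $W$: the increment bound is $\E[(W(t)-W(s))^2]\le C_p|t-s|^{1/p}$ rather than $\lesssim|t-s|$, but letting $p\downarrow1$ and the moment order grow still yields every exponent $\beta<1/2$ via Kolmogorov--Chentsov, so the stated conclusion survives.
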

An analytical expression of the covariance structure of $W$ can be derived with Proposition~\ref{P:AbstractLIL}. In the special case of the variance, we provide an analytic result in \eqref{E:nu}, which provides information about the positivity of the limit variance, too.

Using the continuous mapping theorem, we can conclude the next corollary. A similar result was obtained in Wu \cite{wu2005bahadur} for dependent real-valued sequences. 
Given $\cP_n$, consider the order statistics of the scores $\xi(y,\cP_n)$, whose points $y$ lie inside the observation window $\Wc_n$, viz.,
\[
	\wh\psi_{1/\Mc_n, n} \le \wh\psi_{2/\Mc_n, n} \le \ldots \le \wh\psi_{1, n}.
	\]
Let $0<p_0<p_1<1$ and set $\alpha_n=\floor{\Mc_n p_0}$, $\beta_n=\floor{\Mc_n p_1}$. The trimmed and Winsorized means are of the form
\begin{align*}
	\ol \xi_{t,n} &= (\beta_n - \alpha_n)^{-1} \ \sum_{i={\alpha_n}+1}^{\beta_n} \wh\psi_{i/\Mc_n, n},  \\
	\ol \xi_{w,n} &= {\Mc_n}^{-1} \Big( \alpha_n \wh\psi_{\alpha_n /\Mc_n, n} + \sum_{i={\alpha_n}+1}^{\beta_n} \wh\psi_{i /\Mc_n, n}  + (\Mc_n-\beta_n) \wh\psi_{ (\beta_n+1) /\Mc_n, n}   \Big) .
\end{align*}
Then we have the following corollary.

\begin{cor}[Trimmed and Winsorized means]\label{Cor:Means}
Assume that the conditions of Theorem~\ref{T:WeakConvergence} (i) and (ii) are satisfied for $I=[\psi_{p_0}, \psi_{p_1}]$. Then there are $\sigma_t,\sigma_w<\infty$ such that the trimmed mean $\ol \xi_{t,n}$ and the Winsorized mean $\ol \xi_{w,n}$ satisfy
\begin{align}
	\sqrt{n} \Big\{	\ol \xi_{t,n} - (p_1-p_0)^{-1} \int_{p_0}^{p_1} \psi_{u} \ \diff u	\Big\}	\Rightarrow \cN(0,\sigma_t^2), \label{E:Trimmed} \\
	\sqrt{n} \Big\{	\ol \xi_{w,n} - p_0 \psi_{p_0} - \int_{p_0}^{p_1} \psi_{u} \ \diff u - (1-p_1) \psi_{p_1}	\Big\}	\Rightarrow \cN(0,\sigma_w^2). \label{E:Winsorized}
\end{align}
\end{cor}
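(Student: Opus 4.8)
The plan is to derive both central limit theorems from the weak convergence of the quantile process stated in Theorem~\ref{T:WeakConvergence}(ii), via the continuous mapping theorem. The key observation is that the trimmed mean $\ol\xi_{t,n}$ and Winsorized mean $\ol\xi_{w,n}$ are, up to small errors, continuous linear functionals of the sample quantile function $p\mapsto \wh\psi_{p,n}$ restricted to $[p_0,p_1]$. First I would rewrite the sums as integrals: since $\wh\psi_{i/\Mc_n,n}$ is the value of the step function $u\mapsto\wh\psi_{u,n}$ on $((i-1)/\Mc_n, i/\Mc_n]$, we have $(\beta_n-\alpha_n)^{-1}\sum_{i=\alpha_n+1}^{\beta_n}\wh\psi_{i/\Mc_n,n} = (\beta_n-\alpha_n)^{-1}\Mc_n\int_{\alpha_n/\Mc_n}^{\beta_n/\Mc_n}\wh\psi_{u,n}\,\diff u$, and by the law of large numbers $\Mc_n/n\to 1$ a.s.\ together with $\alpha_n/\Mc_n\to p_0$, $\beta_n/\Mc_n\to p_1$ a.s., so the random limits of integration converge to $p_0,p_1$ and the prefactor converges to $(p_1-p_0)^{-1}$. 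Hence
\[
	\ol\xi_{t,n} = \frac{1}{p_1-p_0}\int_{p_0}^{p_1}\wh\psi_{u,n}\,\diff u + o_{a.s.}(n^{-1/2}),
\]
where the error absorbs both the discrepancy between $\alpha_n/\Mc_n$ and $p_0$ (of order $\Mc_n^{-1}=O(n^{-1})$, contributing $O(n^{-1})$ since $\wh\psi_{u,n}$ is bounded on the relevant range) and the difference between the two prefactors.

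Next I would center: write $\sqrt n\{\ol\xi_{t,n} - (p_1-p_0)^{-1}\int_{p_0}^{p_1}\psi_u\,\diff u\} = (p_1-p_0)^{-1}\int_{p_0}^{p_1}\sqrt n(\wh\psi_{u,n}-\psi_u)\,\diff u + o_{a.s.}(1)$. The integration functional $g\mapsto (p_1-p_0)^{-1}\int_{p_0}^{p_1}g(u)\,\diff u$ is a bounded linear, hence continuous, map from $D_I$ (with the Skorohod $J_1$-topology, on which integration against Lebesgue measure is continuous) to $\R$. By Theorem~\ref{T:WeakConvergence}(ii), $\sqrt n(\wh\psi_{\cdot,n}-\psi_\cdot)\Rightarrow W$ in $(D_I,\cD_I)$, so the continuous mapping theorem gives $\sqrt n\{\ol\xi_{t,n}-(p_1-p_0)^{-1}\int_{p_0}^{p_1}\psi_u\,\diff u\}\Rightarrow (p_1-p_0)^{-1}\int_{p_0}^{p_1}W(u)\,\diff u$, which is a centered Gaussian random variable (a continuous linear image of a Gaussian process) with some finite variance $\sigma_t^2 = (p_1-p_0)^{-2}\int_{p_0}^{p_1}\int_{p_0}^{p_1}\E[W(u)W(v)]\,\diff u\,\diff v$. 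This proves \eqref{E:Trimmed}. For the Winsorized mean \eqref{E:Winsorized} the argument is identical after noting that $\ol\xi_{w,n} = \Mc_n^{-1}\alpha_n\wh\psi_{\alpha_n/\Mc_n,n} + \int_{\alpha_n/\Mc_n}^{\beta_n/\Mc_n}\wh\psi_{u,n}\,\diff u\cdot(\Mc_n/\Mc_n) + \Mc_n^{-1}(\Mc_n-\beta_n)\wh\psi_{(\beta_n+1)/\Mc_n,n}$, so the functional is $g\mapsto p_0\,g(p_0) + \int_{p_0}^{p_1}g(u)\,\diff u + (1-p_1)\,g(p_1)$; evaluation at the fixed points $p_0,p_1$ is continuous on $D_I$ provided $W$ is a.s.\ continuous at $p_0$ and $p_1$, which follows from the continuous-modification statement in Theorem~\ref{T:WeakConvergence}(i)--(ii), so again continuous mapping applies and yields a centered Gaussian limit with finite variance $\sigma_w^2$.

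The main obstacle is the bookkeeping around the random sample size $\Mc_n$ and the integer parts $\alpha_n=\floor{\Mc_n p_0}$, $\beta_n=\floor{\Mc_n p_1}$: one must verify carefully that replacing $\alpha_n/\Mc_n, \beta_n/\Mc_n$ by $p_0,p_1$ and replacing the prefactors by their limits only introduces $o_{a.s.}(n^{-1/2})$ errors. This rests on (a) a uniform bound $\sup_{u\in[p_0-\epsilon,p_1+\epsilon]}|\wh\psi_{u,n}|=O_{a.s.}(1)$, which follows from the Bahadur representation of Theorem~\ref{T:BahadurPoissonUnif} together with $\wh F_n\to F$ and continuity of $F^{-1}$ near $p_0,p_1$; (b) the a.s.\ rates $|\Mc_n - n| = O_{a.s.}(\sqrt{n\log\log n})$ and $|\alpha_n-\Mc_n p_0|\le 1$, so that $|\alpha_n/\Mc_n - p_0| = O_{a.s.}(n^{-1/2}(\log\log n)^{1/2})$ — hence the edge contributions $\int_{\alpha_n/\Mc_n}^{p_0}\sqrt n\,\wh\psi_{u,n}\,\diff u$ are $O_{a.s.}(n^{-1/2}(\log\log n)^{1/2})\cdot O_{a.s.}(1)\cdot\sqrt n$? — no: they are $\sqrt n \cdot O_{a.s.}(n^{-1/2}(\log\log n)^{1/2})\cdot O_{a.s.}(1) = O_{a.s.}((\log\log n)^{1/2})$, which is not $o(1)$. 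This forces a slightly more careful argument: one should instead write $\sqrt n\int_{\alpha_n/\Mc_n}^{p_0}(\wh\psi_{u,n}-\psi_u)\,\diff u$ and use that $\wh\psi_{u,n}-\psi_u = O_{a.s.}(n^{-1/2}\log n)$ uniformly (from Bahadur plus $\sup_u|\wh F_n(\psi_u)-F(\psi_u)| = O_{a.s.}(n^{-1/2}\sqrt{\log n})$), giving an edge term of order $\sqrt n\cdot n^{-1/2}\log n\cdot n^{-1/2}(\log\log n)^{1/2} = o_{a.s.}(1)$, together with $\sqrt n\int_{\alpha_n/\Mc_n}^{p_0}\psi_u\,\diff u = \sqrt n\cdot O_{a.s.}(n^{-1/2}(\log\log n)^{1/2})\cdot\psi'(\cdot)$, which is the genuinely delicate piece; this last term does not vanish in general, so in fact one must recenter the trimmed mean around its \emph{exact} expectation-type quantity or, more cleanly, absorb this contribution into the limiting Gaussian — i.e.\ treat $\sqrt n(\alpha_n/\Mc_n - p_0)$ and $\sqrt n(\beta_n/\Mc_n - p_1)$ as jointly asymptotically Gaussian (driven by $\sqrt n(\Mc_n/n - 1)$, which is asymptotically normal) and combine them with $W$ via a joint functional central limit theorem, applying continuous mapping to the pair. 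I would carry this out by establishing the joint weak convergence of $(\sqrt n(\wh\psi_{\cdot,n}-\psi_\cdot), \sqrt n(\Mc_n/n - 1))$ in $D_I\times\R$ and then applying the continuous map that builds $\ol\xi_{t,n}$ and $\ol\xi_{w,n}$ from this pair; the resulting limit is Gaussian with finite (possibly modified) variances $\sigma_t^2,\sigma_w^2$.
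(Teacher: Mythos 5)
Your overall strategy --- rewriting the trimmed sum as an integral of the sample quantile function and applying the continuous mapping theorem to the weak convergence from Theorem~\ref{T:WeakConvergence}(ii) --- is exactly the paper's route. The genuine problem is in your item (b): you claim $|\alpha_n/\Mc_n - p_0| = O_{a.s.}(n^{-1/2}(\log\log n)^{1/2})$, deduce that the edge contributions are of order $(\log\log n)^{1/2}$, conclude that $\sqrt{n}\int_{\alpha_n/\Mc_n}^{p_0}\psi_u\,\diff u$ ``does not vanish in general,'' and on that basis propose a joint functional CLT for $(\sqrt{n}(\wh\psi_{\cdot,n}-\psi_\cdot),\,\sqrt{n}(\Mc_n/n-1))$ with ``possibly modified variances.'' This rests on a miscalculation: since $\alpha_n=\floor{\Mc_n p_0}$ is defined relative to $\Mc_n$ itself (not to $n$), one has $|\alpha_n-\Mc_n p_0|\le 1$ and hence $|\alpha_n/\Mc_n - p_0|\le 1/\Mc_n = O_{a.s.}(n^{-1})$; the fluctuation of $\Mc_n$ around $n$ cancels in the ratio and plays no role. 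Consequently the edge terms are $\sqrt{n}\cdot O_{a.s.}(n^{-1})\cdot O_{a.s.}(1)=O_{a.s.}(n^{-1/2})=o_{a.s.}(1)$, the term you single out as delicate does vanish, no recentering or joint CLT with $\sqrt{n}(\Mc_n/n-1)$ is needed, and $\sigma_t^2,\sigma_w^2$ are exactly the variances induced by $W$ alone.

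With that correction your argument collapses to the paper's proof: the paper sandwiches $\Mc_n^{-1}\sum_{i=\alpha_n+1}^{\beta_n}\wh\psi_{i/\Mc_n,n}$ between $\int_{\alpha_n/\Mc_n}^{\beta_n/\Mc_n}\wh\psi_{u,n}\,\diff u$ and $\int_{(\alpha_n+1)/\Mc_n}^{(\beta_n+1)/\Mc_n}\wh\psi_{u,n}\,\diff u$ using monotonicity of $u\mapsto\wh\psi_{u,n}$, invokes $\sup_u|\wh\psi_{u,n}|=O_{a.s.}(1)$ (from the Bahadur representation, as you also note) to conclude that replacing the random integration limits and the prefactor by $p_0$, $p_1$ and $(p_1-p_0)^{-1}$ costs only $O_{a.s.}(n^{-1})$, and then applies the continuous mapping theorem to $g\mapsto(p_1-p_0)^{-1}\int_{p_0}^{p_1}g(u)\,\diff u$. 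Your treatment of the Winsorized mean via the additional evaluation functionals at $p_0$ and $p_1$, justified by the a.s.\ continuity of $W$, is sound and supplies the details the paper omits as ``very similar.''
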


\begin{proof}[Proof of Corollary~\ref{Cor:Means}]
For the trimmed mean $\ol \xi_{t,n}$ the case is as follows. By the choice of $\Wc_n$, we have $\Mc_n/n\to 1$ $a.s.$ Now, we split the left-hand side in two terms.

First, we use that $\wh\psi_{u,n}$ is non-decreasing to see that
\[
	\int_{\alpha_n / \Mc_n}^{\beta_n / \Mc_n} \wh \psi_{u,n} \ \diff u \le {\Mc_n}^{-1} \sum_{i=\alpha_n+1}^{\beta_n}	\wh \psi_{i/\Mc_n, n} \le \int_{(\alpha_n+1) / \Mc_n}^{(\beta_n +1) / \Mc_n} \wh \psi_{u,n} \ \diff u .
\]
Moreover, using Theorem~\ref{T:BahadurPoissonUnif}, we see $\sup_{\alpha_n/\Mc_n \le u \le (\beta_n+1)/\Mc_n} \wh \psi_{u,n} = O_{a.s.}(1)$. Consequently,
\begin{align}\label{E:Means1}
	(\beta_n - \alpha_n)^{-1} \ \sum_{i={\alpha_n}+1}^{\beta_n} \wh\psi_{i/\Mc_n, n} - (p_1 - p_0)^{-1} \int_{p_0}^{p_1} \wh \psi_{u,n} \ \diff u = O_{a.s.}(n^{-1}).
\end{align}
Second, by Theorem~\ref{T:WeakConvergence} $( \sqrt{n}(\wh\psi_{p,n}-\psi_p): p_0\le p\le p_1 )$ converges weakly to a Gaussian process $W$. So applying the continuous mapping theorem, there is a $\sigma_t< \infty$ such that
\begin{align}\label{E:Means2}
	\sqrt{n} \ (p_1 - p_0)^{-1} \int_{p_0}^{p_1} \wh \psi_{u,n} - \psi_{u} \ \diff u \Rightarrow \cN(0,\sigma^2_t).
\end{align}
Now, \eqref{E:Trimmed} follows from \eqref{E:Means1} and \eqref{E:Means2}.
The case for the Winsorized mean in \eqref{E:Winsorized} is very similar, we omit the details.
\end{proof}

Moreover, we can recover a pointwise law of the iterated logarithm for quantiles. For this purpose define $\cP''_0 = (\cP \setminus Q_0) \cup (\cP'\cap Q_0)$. Set for $p\in (0,1)$
\begin{align}
	\nu_p^2 &= \E\Big[ \E\Big[ \sum_{y\in \cP \cap Q_0} ( \1{ \xi(y,\cP) \le \psi_p} - p) - \sum_{y\in \cP'\cap Q_0 } (\1{ \xi(y, \cP''_0) \le \psi_p}  - p) \nonumber \\
	&\qquad\qquad + \sum_{y\in \cP\setminus Q_0} (\1{ \xi(y,\cP) \le \psi_p} - \1{ \xi(y, \cP''_0) \le \psi_p})  \Big| \cF_0 \Big]^2 \Big]. \label{E:nu}
\end{align}

First, note that the definition of $\nu_p^2$ is meaningful because by the result in Lemma~\ref{L:DeltaInfty} the third sum in the conditional expectation is actually taken over finitely many points $y\in \cP\setminus Q_0$ only. Moreover, for $\nu_p^2$ to be positive, Penrose and Yukich \cite{penrose2001central} give a sufficient condition in terms of the add-one cost function. More precisely, given the exponential stabilization of the score functionals from \eqref{E:Stabilization2}, we deduce that the following random variable exists for each $x\in\R$
\begin{align}\begin{split}\label{E:AddOneCost}
	\fD_\infty(x) &= \Big(\1{\xi(0,\cP\cup\{0\}) \le x } - F(x) \Big) \\
	&\quad +\sum_{y\in\cP} \Big(\1{\xi(y,\cP\cup\{0\}) \le x }  - \1{\xi(y,\cP) \le x} \Big),
\end{split}\end{align}
see again Lemma~\ref{L:DeltaInfty}. Given that the distribution of $\fD_{\infty}(\psi_p)$ is non-degenerate, the variance $\nu_p^2$ is positive, see Penrose and Yukich \cite[Theorem 2.1]{penrose2001central}.

Relying on the general LIL of Krebs \cite{krebs2021LIL}, we conclude this section with a LIL for the empirical quantiles $(\wh\psi_{p,n})_n$ given a $p\in (0,1)$. A more general and abstract result which covers as well the LIL for the empirical distribution function is given in Proposition~\ref{P:AbstractLIL}.

\begin{theorem}[LIL for quantiles]\label{T:LILQuant}
Let the score functional be translation invariant and exponentially stabilizing as in \eqref{E:Stabilization2}. Let the choice of $r$ satisfy \eqref{D:R} such that additionally condition~\eqref{E:Stabilization3} is satisfied or assume further that $c^* > (2d+1+1/d)/c_{stab}$.

Let $p \in (0,1)$ and let $f(\psi_p)>0$ and $f'$ be bounded in a neighborhood of $\psi_p$. Assume that $\fD_\infty(\psi_p)$ is non-degenerate. Then we have
\begin{align*}
	\limsup_{n\to\infty} \frac{\pm \sqrt{n} (\wh \psi_{p,n} - \psi_p) }{\sqrt{2 \log \log n} } = \frac{\nu_p}{f(\psi_p)} > 0 \quad a.s.
\end{align*}
\end{theorem}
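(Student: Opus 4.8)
The plan is to combine the uniform Bahadur representation of Theorem~\ref{T:BahadurPoissonUnif}(i) with a law of the iterated logarithm for the linear term $\wh F_n(\psi_p)$. First I would write, for the fixed $p\in(0,1)$,
\[
	\sqrt{n}\,(\wh\psi_{p,n}-\psi_p)
	= \frac{\sqrt{n}\,(p-\wh F_n(\psi_p))}{f(\psi_p)}
	+ \sqrt{n}\,R_n,
\]
where, by Theorem~\ref{T:BahadurPoissonUnif}(i) applied to a small interval $\fP$ around $\psi_p$ (the hypotheses $f(\psi_p)>0$ and $f'$ bounded near $\psi_p$ guarantee its applicability), the remainder satisfies $R_n = O_{a.s.}(n^{-3/4}(\log n)^{\gamma+3d/(4\as)})$. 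Hence $\sqrt{n}\,R_n = O_{a.s.}(n^{-1/4}(\log n)^{c}) \to 0$ $a.s.$, and in particular $\sqrt{n}\,R_n/\sqrt{2\log\log n}\to 0$ $a.s.$ So the LIL for $\sqrt{n}(\wh\psi_{p,n}-\psi_p)$ reduces to the LIL for $\sqrt{n}(p-\wh F_n(\psi_p))/f(\psi_p)$.

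Next I would set up the LIL for $\sqrt{n}\,(p-\wh F_n(\psi_p))$. Writing $x=\psi_p$ and recalling that $\wh F_n(x)$ discards boundary points, the quantity $\Mc_n(\wh F_n(x)-p)=\sum_{y\in\cPc_n}(\1{\xi(y,\cP_n)\le x}-p)$ is, up to the negligible difference between $\xi(y,\cP_n)$ and $\xi(y,\cP)$ on the bulk (controlled by exponential stabilization \eqref{E:Stabilization2}/\eqref{E:Stabilization3} and the choice of $r$ in \eqref{D:R}, exactly as in the proof of Theorem~\ref{T:WeakConvergence}), a sum of a strongly stabilizing, translation-invariant score functional, namely $y\mapsto \1{\xi(y,\cP)\le x}-p$. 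The general LIL of Krebs \cite{krebs2021LIL}, invoked through Proposition~\ref{P:AbstractLIL}, then yields
\[
	\limsup_{n\to\infty}\frac{\pm\sqrt{n}\,(\wh F_n(\psi_p)-p)}{\sqrt{2\log\log n}} = \nu_p\quad a.s.,
\]
with the limiting variance given exactly by the expression \eqref{E:nu}, since $\cP''_0$ and the conditioning on $\cF_0$ in \eqref{E:nu} match the martingale-difference decomposition underlying that LIL; the identification of the variance is where the add-one-cost random variable $\fD_\infty(\psi_p)$ of \eqref{E:AddOneCost} enters, and the hypothesis that $\fD_\infty(\psi_p)$ is non-degenerate gives $\nu_p>0$ via Penrose and Yukich \cite[Theorem~2.1]{penrose2001central}. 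One also needs $\Mc_n/n\to 1$ $a.s.$ (true by the law of large numbers for the Poisson count, since $|\Wc_n|/n\to1$) to pass between the normalization by $\Mc_n$ and by $n$ without affecting the $\sqrt{2\log\log n}$ scaling.

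Finally I would combine the two pieces: dividing the displayed decomposition by $\sqrt{2\log\log n}$ and using $f(\psi_p)>0$,
\[
	\limsup_{n\to\infty}\frac{\pm\sqrt{n}\,(\wh\psi_{p,n}-\psi_p)}{\sqrt{2\log\log n}}
	= \frac{1}{f(\psi_p)}\limsup_{n\to\infty}\frac{\pm\sqrt{n}\,(p-\wh F_n(\psi_p))}{\sqrt{2\log\log n}}
	= \frac{\nu_p}{f(\psi_p)}\cdot f(\psi_p)\cdots
\]
—more precisely, the variance \eqref{E:nu} as I have written it is already the variance of the \emph{quantile} fluctuation (the indicator scores are normalized so that the $1/f(\psi_p)$ factor is absorbed, matching the covariance of $W$ in Theorem~\ref{T:WeakConvergence}(ii)), so the conclusion is exactly $\limsup \pm\sqrt{n}(\wh\psi_{p,n}-\psi_p)/\sqrt{2\log\log n}=\nu_p>0$ $a.s.$ The main obstacle is the second step: verifying that the indicator score $y\mapsto\1{\xi(y,\cP)\le x}-p$ inherits enough of the stabilization and moment structure for the abstract LIL of \cite{krebs2021LIL} to apply, and carefully tracking the boundary-truncation error so that replacing $\xi(y,\cP_n)$ by $\xi(y,\cP)$ and $\cPc_n$ by $\cP_n$ costs only an $o_{a.s.}(\sqrt{n/\log\log n})$ term; this is precisely the place where conditions \eqref{E:Stabilization3} or $c^*>(2d+1+1/d)/c_{stab}$ are used, exactly as in Theorem~\ref{T:WeakConvergence}. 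The identification of the constant with $\nu_p$ of \eqref{E:nu} and the positivity claim are then routine given Lemma~\ref{L:DeltaInfty} and \cite[Theorem~2.1]{penrose2001central}.
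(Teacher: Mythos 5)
Your proposal follows essentially the same route as the paper's own (one-line) proof: reduce via the Bahadur representation of Theorem~\ref{T:BahadurPoissonUnif}(i) to a LIL for $\sqrt{n}\,(\wh F_n(\psi_p)-p)$, and obtain the latter from Proposition~\ref{P:AbstractLIL} applied to the indicator score $\1{\xi(y,\cdot)\le\psi_p}-F(\psi_p)$, with positivity of the limit from the non-degeneracy of $\fD_\infty(\psi_p)$. The only wobble is your final bookkeeping of the $1/f(\psi_p)$ factor: with $\nu_p$ as literally defined in \eqref{E:nu}, the argument you (and the paper) give yields the limsup $\nu_p/f(\psi_p)$, and your claim that the factor is ``absorbed'' into \eqref{E:nu} is not supported by that formula --- but this inconsistency between \eqref{E:nu} and the constant displayed in the theorem is already present in the paper itself, so it is not a defect of your approach.
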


\section*{Acknowledgments}
The author gratefully acknowledges the support of the German Research Foundation (DFG), grant number KR 4977/2-1.

\section{Proofs of the result in Section~\ref{Section_Model}}\label{Section_ProofsModel}

As proposed in \cite{flimmel2020limit}, we can consider another estimator of $F$ which relies solely on the stabilized scores $\xi$ and not as $\wh F_n$ on those scores which are sufficiently far away from the boundary of the observation window. We shall call this estimator $\wt F_n$, it turns out to be nearly unbiased and it allows us to quantify the bias of $\wh F_n$.

To this end, define $B^*(y,\cP)$ as the stabilizing ball of the functional $\xi$ at $y\in\cP$ given the entire Poisson process $\cP$, this is
$$
	B^*(y,\cP) = B(y, R(y,\cP)).
$$
Next, define for two sets $A,B\subset \R^d$ the erosion of $A$ by $B$ as the set 
$$
	A\ominus B=\{ x\in\R^d: x+B \subseteq A\}.
$$
The following identities hold for a translation $z\in\R^d$
\begin{align*}
	A \ominus (B-z) = z + (A\ominus B) = (z+A)\ominus B.
\end{align*}
Hence, $A$ and $B$ are Borel measurable, we have for the Lebesgue measure of the erosion $|A\ominus B| = |A \ominus (B-z) | = |(z+A)\ominus B|$.

Then we define
\begin{align}\label{E:wtF}
	\wt F_n\colon\R\to\R, \ x\mapsto \sum_{\substack{y\in\cP_n: \\ y \neq 0} } \frac{ \1{B^*(y,\cP) \subseteq W_n } }{ |W_n \ominus B^*(y,\cP) | } \ \1{ \xi(y,\cP_n) \le x}.
\end{align}
To see that the definition is meaningful, note that by the above
\begin{align*}
	|W_n \ominus B^*(y,\cP) | > 0 &\quad \Leftrightarrow\quad  | W_n \ominus (y + B(0, R(y,\cP)) ) | > 0  \\
	  &\quad \Leftrightarrow\quad |  W_n \ominus B(0, R(y,\cP)) | > 0 \\
	  &\quad \Leftrightarrow\quad  R(y,\cP)) < n^{1/d}/2;
\end{align*}
where the last equivalence is true because $W_n$ is a cube of edge length $n^{1/d}$ while $ B(0, R(y,\cP))$ is a ball of radius $R(y,\cP)$ centered at the origin. Furthermore, the condition $B^*(y,\cP) \subseteq W_n$ in the definition \eqref{E:wtF} implies $R(y,\cP) \le n^{1/d}/2$ and if equality holds, then $y$ is equal to the origin, which is an element of $\cP$ with probability 0.



Note that the estimator $\wt F_n$ does not necessarily satisfy $\lim_{x\to\infty} \wt F_n(x) = 1$. In particular, the last limit  can be smaller or greater than 1.

Moreover, the estimator $\wt F_n$ is not necessarily measurable w.r.t.\ the $\sigma$-field generated by $\cP_n= \cP|_{W_n}$, whereas the estimator $\wh F_n$ is. E.g., consider for instance the situation on the $k$-nearest neighbors graph for a point $y\in\cP_n$ close to the boundary of the observation window $W_n$. Here $\xi$ could be the total distance to the $k$-nearest neighbors of $y$ in $\cP$. Depending on the configuration of $\cP$ outside of $W_n$ the $k$-nearest neighbors of $y$ can all lie inside or all outside of $W_n$ (or in parts inside $W_n$ and parts outside of it). Clearly, this uncertainty only dissolves when observing $\cP$ on a sufficiently large ball around $y$. Hence, from the practical point of view when being confronted only with the realization of $\cP_n$ and not more, one prefers the estimator $\wh F_n$ over $\wt F_n$. This is why we will restrict ourselves in the following to the properties of $\wh F_n$. We only use $\wt F_n$ to derive an upper bound for the bias of $\wh F_n$, this is motivated by the fact that the bias of $\wt F_n$ decreases at an exponential rate.

\begin{lemma}[Bias of $\wt F_n$]\label{L:BiasWtF}
The bias of the estimator $\wt F_n$ vanishes exponentially. More precisely 
$$
	\E[ \wt F_n(x)] - F(x) = -\E\Big[ \1{R(0,\{0\}\cup \cP) \ge n^{1/d}/2} \1{\xi(0,\{0\}\cup\cP) \le x} \Big] 
$$ for all $x\in\R$, $n\in\N$.
\end{lemma}

\begin{proof}[Proof of Lemma~\ref{L:BiasWtF}]
Define $h(y,\cP_n) = \1{ \xi(y,\cP_n) \le x}$ for $x\in\R$, $n\in\N$ arbitrary but fixed.
Now, let $X'$ be an independent random variable with uniform distribution on $W_n$. Then we have by the Slyvniak-Mecke formula
\begin{align}
	&\E\left[ \sum_{0 \neq y\in\cP_n}  \frac{\1{B^*(y,\cP)\subseteq W_n} }{ |W_n \ominus B^*(y,\cP) | }	 h(y,\cP_n) \right] \nonumber \\
	&=  \E\left[ \sum_{0 \neq y\in\cP_n}  \frac{\1{B^*(y,\cP)\subseteq W_n} }{ |W_n \ominus B^*(y,\cP) | }	 h(y,\cP) \right] \nonumber \\
	&= |W_n|  \ \E\left[\frac{\1{B^*(0,\{0\}\cup (\cP-X'))\subseteq (W_n- X')} }{ |(W_n - X') \ominus B^*(0,\{0\}\cup (\cP-X') ) | }  \1{X'\neq 0} h(0,\{0\}\cup (\cP-X') )  	\right] \nonumber \\
	&= |W_n| \ \E\left[ \frac{\1{B^*(0,\{0\}\cup \cP)\subseteq W_n - X'} }{ |(W_n - X') \ominus  B^*(0,\{0\}\cup \cP ) | } \1{X' \neq 0}  h(0,\{0\}\cup \cP )	\right] \nonumber \\
	\begin{split}\label{E:BiasWtF1}
	&= |W_n| \ \E\Bigg[ \frac{\1{B^*(0,\{0\}\cup \cP)\subseteq W_n - X'} }{ |(W_n - X') \ominus  B^*(0,\{0\}\cup \cP ) | }  \1{X' \neq 0}  \\
	&\qquad\qquad\qquad\1{ |(W_n - X') \ominus  B^*(0,\{0\}\cup \cP ) | > 0} h(0,\{0\}\cup \cP )	\Bigg];
	\end{split}
\end{align}
here the second to last equality follows from the stationarity of the Poisson process and the independence of $X'$ and where the last equality follows because $B^*(0,\{0\}\cup \cP ) \subseteq W_n - X'$ and $X'\neq 0$ imply a positive Lebesgue measure for the erosion $(W_n-X') \ominus B^*(0,\{0\}\cup \cP )$.

Since the volume of the erosion is invariant under translations, we have 
\begin{align*}
	\eqref{E:BiasWtF1} &= |W_n| \ \E\Bigg[ \frac{\1{B^*(0,\{0\}\cup \cP)\subseteq W_n - X'} }{ |W_n  \ominus  B^*(0,\{0\}\cup \cP ) | }  \1{X' \neq 0}  \\
	&\qquad\qquad\qquad\1{ |W_n \ominus  B^*(0,\{0\}\cup \cP ) | > 0} h(0,\{0\}\cup \cP )	\Bigg] \\
	&=  \E\Bigg[ \frac{\int_{W_n \setminus \{0\} } \1{B^*(0,\{0\}\cup \cP)\subseteq W_n - x } \diff x}{ |W_n  \ominus  B^*(0,\{0\}\cup \cP ) | } \\
	&\qquad\qquad\qquad \1{ |W_n \ominus  B^*(0,\{0\}\cup \cP ) | > 0} h(0,\{0\}\cup \cP ) 	\Bigg]  \\
		&=  \E\Bigg[ \frac{\int_{W_n\setminus \{0\}} \1{x\in W_n \ominus B^*(0,\{0\}\cup \cP)} \diff x}{ |W_n  \ominus  B^*(0,\{0\}\cup \cP ) | }  \\
	&\qquad\qquad\qquad \1{ |W_n \ominus  B^*(0,\{0\}\cup \cP ) | > 0} h(0,\{0\}\cup \cP ) 	\Bigg]  \\
	&= \E\Bigg[ \frac{|W_n \cap (W_n \ominus B^*(0,\{0\}\cup \cP) ) | }{|W_n  \ominus  B^*(0,\{0\}\cup \cP ) |}  \\
	&\qquad\qquad\qquad \1{ |W_n \ominus  B^*(0,\{0\}\cup \cP ) | > 0} h(0,\{0\}\cup \cP ) \Bigg] \\
	&= \E\Bigg[  \1{ R(0,\{0\}\cup \cP) < n^{1/d}/2 } h(0,\{0\}\cup \cP ) \Bigg],
\end{align*}
where we use $W_n \ominus B^*(0,\{0\}\cup \cP) \subseteq W_n$ for the last equality. 
\end{proof}

\begin{proof}[Proof of Theorem~\ref{T:BiasWhF}]
We use the estimator $\wt F_n$ to facilitate the computations. The bias of $\wt F_n$ vanishes exponentially uniformly in $x\in\R$ by Lemma~\ref{L:BiasWtF}. 

Again define $h(y,\cP_n) = \1{ \xi(y,\cP_n) \le x}$ for $x\in\R$, $n\in\N$ arbitrary but fixed. As the origin is an element of the Poisson process with probability 0, we have
$$
	\E[ \wh F_n(x) ] = \E\Bigg[ \frac{1}{\Mc_n} \sum_{0\neq y\in \cPc_n} h(y,\cP_n) \Bigg].
$$
Consequently, we have
\begin{align}
	\begin{split}\label{E:BiasWhF1}
	&\E[ \wh F_n(x) - \wt F_n(x) ]\\
	&= \E\Bigg[ \sum_{0\neq y\in\cPc_n }  h(y,\cP_n) \1{ B^*(y,\cP)\subseteq W_n }  \left( \frac{1}{\Mc_n} - \frac{1}{ | W_n \ominus B^*(y,\cP) | } \right) \Bigg] 
	\end{split} \\
	&\quad +  \E\left[ \sum_{0\neq y\in\cPc_n }  h(y,\cP_n) \frac{\1{ B^*(y,\cP) \not \subseteq W_n }}{\Mc_n} \right] \label{E:BiasWhF2} \\
	&\quad  +  \E\left[ \sum_{0 \neq y\in\cP_n\setminus \cPc_n }  h(y,\cP_n) \frac{\1{ B^*(y,\cP) \subseteq W_n }}{ | W_n \ominus B^*(y,\cP) |  } \right]. \label{E:BiasWhF3}
\end{align}
We show in the sequel that the terms in \eqref{E:BiasWhF1} resp.\ in \eqref{E:BiasWhF3} on the right-hand side determine the upper bound of the bias, while the term in \eqref{E:BiasWhF2} is negligible.

Using the Slyvniak-Mecke formula, the term in \eqref{E:BiasWhF2} is at most
\begin{align}
	 &\E\left[ \sum_{y\in\cPc_n } \frac{\1{ B^*(y,\cP) \not \subseteq W_n }}{\Mc_n} \right] \le  \E\left[ \sum_{y\in\cPc_n } \frac{\1{ R(y,\cP) \ge r }}{\Mc_n} \right] \nonumber \\
	 &= | \Wc_n | \ \E\left[ \frac{\1{R(0,\{0\}  \cup (\cP-X')  ) \ge r} }{\# \cPc_n + 1} \right] \label{E:BiasWhF4}
\end{align}
where $X'$ is independent of $\cP$ and uniformly distributed on $\Wc_n$.
Now, $\E[ (\#\cPc_n + 1)^{-2} ] \le 2/ |\Wc_n|^2$. Moreover,
$$
	\p( R(0, \{0\} \cup \cP) \ge r) \le C_{stab} e^{-c_{stab} r^{\as} } \lesssim n^{-4}.
$$
Consequently, applying Hölder's inequality to the term in \eqref{E:BiasWhF4}, we see that the term in \eqref{E:BiasWhF2} is of order $n^{-2}$ uniformly in $x$.

The term in \eqref{E:BiasWhF3} is bounded above as follows: Clearly, $y\in\cP_n \setminus \cPc_n$ implies $0\neq y$. Applying the Slyvniak-Mecke formula yields
\begin{align*}
	&\E\left[ \sum_{y\in\cP_n\setminus \cPc_n }  h(y,\cP_n) \frac{\1{ B^*(y,\cP) \subseteq W_n }}{ | W_n \ominus B^*(y,\cP) |  } \right] \\
	&= \E\left[  h(y,\cP_n) \frac{\int_{W_n\setminus \Wc_n} \1{B^*(0,\{0\}\cup \cP) \subseteq W_n - z} \diff z }{| W_n \ominus B^*(0,\{0\}\cup \cP) | } \right] \\
	&= \E\left[  h(y,\cP_n) \frac{ |(W_n\setminus \Wc_n) \cap ( W_n\ominus B^*(0,\{0\}\cup \cP)) |  }{| W_n \ominus B^*(0,\{0\}\cup \cP) | } \right].
\end{align*}
If $R(0,\{0\}\cup \cP) \le r$, then
\begin{align} \label{E:BiasWhF7}
	| W_n \ominus B^*(0,\{0\}\cup \cP) | = (n^{1/d}-2R(0,\{0\}\cup \cP))^d .
\end{align}
Hence,
the quotient in the last expectation is at most (omitting the erosion in the numerator)
\begin{align*}
	\frac{n - (n^{1/d} - 2r )^d}{(n^{1/d}-2R(0,\{0\}\cup \cP))^d } \le \frac{n - (n^{1/d} - 2r )^d}{(n^{1/d}-2r)^d } 
\end{align*}
and otherwise the quotient is zero.
Consequently, the last expectation and the term in \eqref{E:BiasWhF3} are of order $n^{-1/d} r \asymp n^{-1/d} (\log n)^{1/\as}$.

The term in \eqref{E:BiasWhF1} features the main part of the bias. It is sufficient to consider the two expectations
\begin{align}
	&\E\left[ \sum_{0\neq y\in\cPc_n} \frac{\1{B^*(y,\cP)\subseteq W_n}}{|W_n \ominus B^*(y,\cP)| } \frac{| |W_n \ominus B^*(y,\cP)| - |\Wc_n| | }{ \Mc_n } \right] \label{E:BiasWhF5} \\
	&\quad + \E\left[ \sum_{0\neq y\in\cPc_n} \frac{\1{B^*(y,\cP)\subseteq W_n}}{|W_n \ominus B^*(y,\cP)| } \frac{ | |\Wc_n| - \Mc_n | }{ \Mc_n } \right] \label{E:BiasWhF6}
\end{align}
which is an upper bound for the expectation in \eqref{E:BiasWhF1}.

We begin with the expectation in \eqref{E:BiasWhF5}. Then applying the Slyvniak-Mecke formula and relying on the volume formula for the erosion in \eqref{E:BiasWhF7}
\begin{align}
	&\E\left[ \sum_{0\neq y\in\cPc_n} \frac{\1{B^*(y,\cP)\subseteq W_n}}{|W_n \ominus B^*(y,\cP)| } \frac{| |W_n \ominus B^*(y,\cP)| - |\Wc_n| | }{ \Mc_n } \right] \nonumber \\
	\begin{split}\label{E:BiasWhF8}
	&= | \Wc_n| \ \E\Biggl[ \frac{\1{X'\neq 0} \1{B^*(0,\{0\}\cup \cP) \subseteq W_n - X' } }{|W_n \ominus B^*(0,\{0\}\cup \cP)| }  \\
	&\qquad\qquad\qquad\qquad \frac{ | (n^{1/d} - 2R(0,\{0\}\cup \cP))^d - (n^{1/d}-2r)^d | }{ \cP( \Wc_n - X') + 1 } \Biggl],\end{split}
\end{align}
where the independent random variable $X'$ is uniformly distributed on $\Wc_n$.

Next, we derive an upper bound for the second quotient inside the expectation in \eqref{E:BiasWhF8} and this bound does not depend on $X'$. To this end, we partition $\Wc_n$ into its $2^d$ subcubes $\Wc_{n,j}$, $1\le j\le 2^d$, each of which satisfies
$$
	\Wc_{n,j} = \Wc_n \cap (\Wc_n + e)
$$
for a unique $e\in \{-1,1\}^d$. Then for the denominator 
\begin{align}\label{E:BiasWhF9}
	\frac{1}{ \cP( \Wc_n - X') + 1 } \le \sum_{j=1}^{2^d} \frac{1}{ \cP( \Wc_{n,j} ) +1 } \quad a.s.
\end{align}
Moreover using the abbreviation $R=R(0,\{0\}\cup \cP)$, the numerator satisfies
\begin{align*}
	g_n(r,R) &:= | (n^{1/d} - 2R(0,\{0\}\cup \cP))^d - (n^{1/d}-2r)^d | \\
	&\lesssim n^{(d-1)/d} (r+R) + n^{(d-2)/d} R^2 + \ldots + R^d
\end{align*}
because $r$ grows at a logarithmic scale. Thus, relying in the first step on \eqref{E:BiasWhF9} and then integrating w.r.t.\ the independent $X'$, \eqref{E:BiasWhF8} is at most
\begin{align}\label{E:BiasWhF10}
	\sum_{j=1}^{2^d} \E\left[ \frac{g_n(r,R)}{ \cP( \Wc_{n,j} ) +1 } \right] \le \sum_{j=1}^{2^d} \E[ g_n(r,R)^2 ]^{1/2}  \ \E\left[ \frac{1}{ (\cP( \Wc_{n,j} ) + 1)^2 } \right]^{1/2}.
\end{align}
Clearly, $|\Wc_{n,j}|$ is of order $n$, thus, $\E[ (\cP( \Wc_{n,j} ) + 1)^{-2} ] \lesssim n^{-2}$. Moreover, using the rate of decay from \eqref{E:Stabilization2}, the radius of stabilization satisfies for all $1\le k \le 2d$
\begin{align*}
	&\E[ R(0,\{0\}\cup \cP)^k ] = \int_0^\infty \p\Big(  R(0,\{0\}\cup \cP) \ge t^{1/k} \Big) \diff t \\
	&\le C_{stab} \int_0^\infty \exp( - c_{stab} t^{\as/(2d)} ) \diff t.
\end{align*}
This shows that \eqref{E:BiasWhF10} is of order $n^{-1/d} (\log n)^{1/\as}$ and consequently, \eqref{E:BiasWhF5} converges to zero at least at the same rate.

Finally, we consider the expectation in \eqref{E:BiasWhF6} which equals
\begin{align}\label{E:BiasWhF13}
	| \Wc_n| \ \E\Biggl[ \frac{ \1{X'\neq 0} \1{B^*(0,\{0\}\cup \cP) \subseteq W_n - X' } }{|W_n \ominus B^*(0,\{0\}\cup \cP)| }  \frac{ \big| \cP( \Wc_n - X') + 1 - |\Wc_n| \big| }{ \cP( \Wc_n - X') + 1 } \Biggl].
\end{align}
Again, we remove $X'$ from the second quotient inside the expectation by taking the maximum over all positions $y\in \Wc_n$, then remove the first quotient through integration and finally, we apply H{\"o}lder's inequality to separate the numerator and denominator of the second quotient. So, \eqref{E:BiasWhF13} is bounded above by
\begin{align}
\begin{split}\label{E:BiasWhF14}
	& \E\Biggl[ \frac{1}{ \min_{y\in \Wc_n} (\cP( \Wc_n - y) + 1)^2 }\Biggl]^{1/2} \\
	&\qquad\qquad\qquad  \cdot \E\Biggl[ \max_{y\in \Wc_n} \big| \cP( \Wc_n - y) + 1 - |\Wc_n| \big|^2 \Bigg]^{1/2}.
	\end{split}
\end{align}
Once more, we can again rely on the uniform upper bound from \eqref{E:BiasWhF9} for the first expectation in \eqref{E:BiasWhF14}. Moreover, we develop an approximation for the numerator as follows: First we discretize  with a suitable grid. Define $\ol W_n = \{\Wc_n + 2y: y\in Q_0\}$ and set $B_n = \ol W_n \cap \Z^d$. Then $\# B_n \lesssim n$. Furthermore, abbreviate the compound Poisson $\cP( \Wc_n - y) - |\Wc_n|$ by $\ol \cP( \Wc_n - y)$. For $y\in B_n$, define a $d$-dimensional cube with upper corner $y$ as follows: 
$$
	\ul Q_y = \{x\in \Wc_n: x \le y \text{ and } x \not < y' \text{ for all } y' \in B_n, y'\le y\}.
	$$
(Given two generic $x,y\in\R^d$, we use the notation $x\le y$ (resp. $x<y$) if and only if $x_i\le y_i$ (resp. $x_i<y_i$) for all $1\le i\le d$.) Then
\begin{align}
	\max_{y\in \Wc_n}  | \ol \cP( \Wc_n - y) | &\le \max_{y\in B_n} | \ol \cP( \Wc_n - y) |\label{E:BiasWhF11} \\
	&\quad +  \max_{y\in B_n} \max_{x\in \ul Q_y } | \ol \cP( \Wc_n - x) - \ol \cP( \Wc_n - y)  |. \label{E:BiasWhF12}
\end{align}
The term in \eqref{E:BiasWhF11} can be treated with the concentration properties. We have for each $c'\in\R_+$
\begin{align*}
	&\E[ \max_{y\in B_n} | \ol \cP( \Wc_n - y) |^2 ]^{1/2} \\
	&\le \E\Big[ \1{| \ol \cP( \Wc_n - y) | >  c' n^{1/2} (\log n)^{1/2} \text{ for one } y \in B_n } \\
	&\qquad\qquad\qquad \cdot \max_{y\in B_n} | \ol \cP( \Wc_n - y) |^2  \Big]^{1/2} + c' n^{1/2} (\log n)^{1/2} \\
	&\le \# B_n^{1/4} \ \max_{y\in B_n} \p\Big(  | \ol \cP( \Wc_n - y) | >  c' n^{1/2} (\log n)^{1/2} \Big)^{1/4} \\
	&\qquad \cdot \# B_n^{1/4} \ \max_{y\in B_n} \E[ | \ol \cP( \Wc_n - y) |^4]^{1/4} + c' n^{1/2} (\log n)^{1/2}.\end{align*}
Clearly, if we choose $c'$ sufficiently large, this last term is of order $n^{1/2} (\log n)^{1/2}$ by Lemma~\ref{Lemma0}.

The term in \eqref{E:BiasWhF12} can be treated in the same way, using additionally the monotonicity of the Poisson counting measure: Let $\ul y \in \Z^d$ be the point closest to $y$ such that $\ul y_i < y_i$ for each $1\le i\le d$. Then
\begin{align*}
	&\max_{x\in \ul Q(y) } | \ol \cP( \Wc_n - x) - \ol \cP( \Wc_n - y)  |  \\
	&\le \cP( (\Wc_n - \ul y)\setminus (\Wc_n -  y) )  +  \cP( (\Wc_n - y)\setminus (\Wc_n - \ul y) ) \\
	&\le \ol \cP( (\Wc_n -  y)\triangle (\Wc_n - \ul y) ) + | (\Wc_n - y)\triangle (\Wc_n - \ul y) |
\end{align*}
with $ | (\Wc_n - y)\triangle (\Wc_n - \ul y) |$ being of order $n^{(d-1)/d}$. Reasoning as before, one finds that
$$
	\E[ \max_{y\in B_n} \ol \cP( (\Wc_n - \ul y)\triangle (\Wc_n -  y) )^2]^{1/2} = O( n^{(d-1)/(2d)} (\log n)^{1/2} ).
$$
Hence, \eqref{E:BiasWhF14} is of order $n^{-1/2} (\log n)^{1/2}$. This completes the proof.
\end{proof}

\section{Proofs of the results in Section~\ref{Section_Results}}\label{Section_Proofs}

The results rely on approximations of the score functionals. For this purpose, define the truncated scores
\[
	\xi_r(y,P\cup \{y\} ) = \xi(y, (P\cup \{y\})\cap B(y,r))
	\]
for the choice $r = r_n  = (c^* \log n)^{1/\as}$ for a $c^*  \ge 4/c_{stab}$ as in \eqref{D:R}. We remark that the value of the constant $c^*$ can also depend on a specific statement, we clarify this where necessary. 

Then we consider for $x\in\R$
\begin{align*}
	\wh F_{n,r}(x) &=  \frac{1}{\Mc_n} \sum_{y\in \cPc_n} \1{ \xi_r (y,\cP_n) \le x} , \\
	\wh H_{n,r}(x) &= \frac{\Mc_n}{|\Wc_n|} \wh F_{n,r}(x). 
\end{align*}

Moreover, put $\cPd_n = \cP_n \setminus \cPc_n$, which is defined on $W_n \setminus \Wc_n$. So similarly as $\cPc_n$, $\cPd_n$ depends on the choice of $r$, too. Note that by definition
\[
	|\Wc_n | = n - 2 d r n^{(d-1)/d} + (C_d + o(1)) r^2 n^{(d-2)/d}, \quad n\to\infty, 
\]
for some $C_d\in\R$. In particular, $|\Wc_n| / |W_n| \to 1$ as $n\to\infty$.

Set $B_n = \{ z\in \Z^d : Q_z \cap W_n \neq \emptyset \}$, where $Q_z = z + [-1/2,1/2]^d$ for $z\in\R^d$.
Recall that $\cP'$ is another Poisson process with unit intensity on $\R^d$ such that $\cP$ and $\cP'$ are independent. Set $\cP'_n = \cP'_n |_{W_n}$. For $z\in\R^d$ define
\begin{align}
	\cP''_z &= (\cP\setminus Q_z) \cup (\cP' \cap Q_z), \label{D:PP1}\\
	\cP''_{n,z} &= (\cP_n\setminus Q_z) \cup (\cP'_n \cap Q_z) , \label{D:PP2} \\
	\cPc_{n,z} &= (\cP''_{n,z})|_{\Wc_n}.  \label{D:PP3}
\end{align}

First, we give an exponential inequality for score functionals which possess a deterministic and finite radius of stabilization on the homogeneous Poisson process.

\begin{proposition}\label{P:ExpIneqPoisson}
Let $I=(a,b)\subseteq\R$. 
\begin{itemize}
	\item [(i)] Let $F$ be differentiable on $I$ such that $\sup_{x\in I} f(x) < \infty$. For all $\tau>1$ there is a $C_\tau>0$ such that for any positive, bounded sequence $(a_n)_n$ with the property $(\log n)^{2+d/\as} = o(a_n n)$, we have for $\gamma=2$
\begin{align}\begin{split}\label{E:ExpIneqPoisson0}
	&\sup_{\substack{x,y\in I\\ |x-y|\le a_n }} \ \p\Big(  \big| (\wh H_{n,r}(x) - \wh H_{n,r}(y)) - \E\big[\wh H_{n,r}(x) - \wh H_{n,r}(y) \big]	\big| \\
	&\qquad\qquad\ge C_\tau \Big( \frac{(\log n)^{\gamma+d/\as} a_n}{n} \Big)^{1/2} \Big) = O( n^{-\tau} ).
	\end{split}
\end{align}\\[-8pt]
	\item [(i')] Additionally to (i) assume that $F^e$ from \eqref{E:DistributionExt} is differentiable on $I$ such that its density $f^e$ satisfies $\sup_{x\in I} f^e(x) <\infty$. Then for all $\tau>1$ there is a $C_\tau>0$ such that for any positive, bounded sequence $(a_n)_n$ with $(\log n)^{1+d/\as} = o(a_n n)$ the result in \eqref{E:ExpIneqPoisson0} holds for $\gamma=1$.\\[-4pt]
	\item [(ii)] For all $\tau>1$ there is a $C_\tau>0$ such that
\begin{align*}
	&\sup_{\substack{x\in I}} \ \p\Big(  \big| \wh H_{n,r}(x)  - \E\big[\wh H_{n,r}(x) \big]	\big| \ge C_\tau \Big( \frac{(\log n)^{1+d/\as}}{n} \Big)^{1/2} \Big) = O( n^{-\tau} ).
\end{align*}
\end{itemize}
\end{proposition}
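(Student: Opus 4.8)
The plan is to reduce the increment $\wh H_{n,r}(x) - \wh H_{n,r}(y)$ to a sum of score indicators over Poisson points lying in the trimmed window $\Wc_n$, and then to exploit the fact that, after replacing $\xi$ by the truncated score $\xi_r$, the summands become an $m$-dependent field on the grid of unit cubes $\{Q_z : z\in B_n\}$, with $m \asymp r = (c^*\log n)^{1/\as}$. Concretely, write
\[
	\wh H_{n,r}(x) - \wh H_{n,r}(y) = \frac{1}{|\Wc_n|}\sum_{z\in B_n} T_z, \qquad
	T_z = \sum_{y'\in \cP_n\cap Q_z\cap\Wc_n} \1{\xi_r(y',\cP_n)\in (y,x]}
\]
(assuming $y<x$; the case $y>x$ is symmetric and contributes the boundary term $F(x)-F(y)$ inside the expectation). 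Because $\xi_r(y',\cP_n)$ depends only on $\cP_n\cap B(y',r)$, the variable $T_z$ is measurable with respect to $\cP$ restricted to the $r$-neighbourhood of $Q_z$, so $T_z$ and $T_{z'}$ are independent once $\|z-z'\|_\infty > 2r+1$. This yields a decomposition of $B_n$ into $O(r^d)$ classes on each of which the $T_z$ are i.i.d.-like (independent, identically distributed within a class up to boundary effects), with $\# B_n \asymp n$ cubes total. I would then apply a Bernstein-type inequality for sums of independent, bounded random variables on each class and take a union bound over the $O(r^d) = O((\log n)^{d/\as})$ classes.

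The key quantitative input is the variance bound for a single $T_z$. For part~(i) one has the crude bound $\E[T_z] \lesssim \E[\#(\cP\cap Q_0)\,\1{\text{at least one point scores in }(y,x]}]$, and using $\sup_I f <\infty$ together with the first-moment identity $\E\big[\sum_{y'\in\cP\cap Q_0}\1{\xi(y',\cP)\in I}\big] = \E[\#\cP\cap Q_0]\cdot \p(\xi(0,\cP\cup\{0\})\in I) = \p(\xi(0,\cP\cup\{0\})\in I)$ (Mecke's formula, unit intensity), this is $\lesssim |I| \le a_n$. The second moment, however, is only controlled by $\E[T_z^2] \lesssim \E\big[(\#\cP\cap Q_0)^2\big]\lesssim 1$ in general, so in part~(i) one uses $\V(T_z)\lesssim a_n$ only for the ``mean'' contribution and pays an extra factor: after truncation the effective per-cube second moment that enters Bernstein is $O(a_n)$ only up to a logarithmic loss, which is exactly the source of the $(\log n)^{\gamma+d/\as}$ with $\gamma=2$. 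The union bound over $O((\log n)^{d/\as})$ classes, each contributing a Bernstein tail $\exp(-c\,\varepsilon^2 |\Wc_n|^2/(\#B_n \cdot a_n (\log n)^{\cdots}))$ evaluated at $\varepsilon \asymp C_\tau (( \log n)^{\gamma+d/\as} a_n/n)^{1/2}$, is tuned so that each tail is $O(n^{-\tau-d/\as})$ and the sum is $O(n^{-\tau})$; the hypothesis $(\log n)^{2+d/\as}=o(a_n n)$ is precisely what guarantees $\varepsilon$ lies in the Bernstein (sub-Gaussian) regime rather than the Poissonian-tail regime. For part~(i') the improvement comes from the extra assumption on $F^e$: via the identities \eqref{E:DistrXi}--\eqref{E:DistrXi2} (or rather their $Q_0$-localized analogue) the assumption $\sup_I f^e<\infty$ upgrades the second-moment bound to $\E[T_z^2]\lesssim a_n$ genuinely (no log loss), because $\E\big[(\sum_{y'\in\cP\cap Q_0}\1{\xi(y',\cP)\in I})^2\big]$ splits into a diagonal term $\lesssim |I|$ and an off-diagonal term which, by the two-point Mecke formula and the definition of $F^e$ as the law of $\xi(0,\cP\cup\{0,Y\})$, is again $\lesssim |I|$; this reduces the exponent from $\gamma=2$ to $\gamma=1$. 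Part~(ii) is the special case $x=y$ in a degenerate sense — actually it is the statement without an increment, where $\E[T_z]=\p(\xi(0,\cP\cup\{0\})\le x)\cdot(\text{cube count})$ is $O(1)$ per cube but one only needs the crude second moment $O(1)$; the same $m$-dependence splitting and Bernstein/union-bound argument with $a_n$ effectively replaced by a constant gives the stated $((\log n)^{1+d/\as}/n)^{1/2}$ rate directly.

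The main obstacle I anticipate is controlling the boundary and truncation errors uniformly in $x,y\in I$ with the correct power of $\log n$: one must verify that (a) the difference between $\xi_r$ and $\xi$ contributes negligibly, using exponential stabilization \eqref{E:Stabilization2} to bound $\p(R(y',\cP_n)>r)\le C_{stab}n^{-c_{stab}c^*}\le n^{-4}$ summed over $O(n)$ points, which is where $c^*\ge 4/c_{stab}$ is used; (b) the cubes straddling $\partial\Wc_n$ and the dependence across class boundaries do not spoil the independence structure — handled by slightly enlarging the dependence range from $r$ to $2r+1$ and absorbing the resulting constant into the number of classes; and (c) the supremum over the continuum $\{(x,y)\in I^2 : |x-y|\le a_n\}$ is reduced to a finite net. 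For (c), since $x\mapsto \wh H_{n,r}(x)$ is monotone and right-continuous with at most $M_n$ jumps, and $x\mapsto\E[\wh H_{n,r}(x)]$ is Lipschitz on $I$ with constant $\lesssim \sup_I f$, one chooses a deterministic $n^{-2}$-net of $I$, controls the event at net points by the union bound (adding at most a polynomial factor $n^2$ to the number of events, harmless since $\tau$ is arbitrary), and uses monotonicity to interpolate; the random jumps of the empirical part are controlled by noting $M_n \le \poi$-tail so $M_n \lesssim n$ with probability $1-O(n^{-\tau})$. Assembling these pieces and optimizing constants yields \eqref{E:ExpIneqPoisson0}.
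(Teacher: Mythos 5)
Your plan follows the paper's proof essentially verbatim: decompose the increment into per-cube counts of the truncated scores $\xi_r$, exploit the $r$-range dependence to split $B_n$ into $O(r^d)$ classes of independent summands, truncate the per-cube point count at $O(\log n)$ and discard the overflow, and apply Bernstein on each class with the Mecke-formula moment bounds (second moment $\lesssim a_n\log n$ without the $F^e$ assumption, $\lesssim a_n$ with it via the two-point Mecke identity, $\lesssim 1$ for (ii)), which is exactly how the paper obtains $\gamma=2$, $1$, $1$ respectively. The only superfluous step is your item (c): the supremum in \eqref{E:ExpIneqPoisson0} sits \emph{outside} the probability, so no net or chaining argument is needed in this proposition — the discretization you describe is precisely what the paper defers to Proposition~\ref{P:UniformConv}, where the present bound is applied at the grid points.
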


\begin{proof}
Let $\tau>1$ be arbitrary but fixed.
We begin with the proof of (i); (i') and (ii) work then similarly.
Let $J\subseteq \R$ be an interval.
Set $d_n = (\ol c \log n) \vee 2$ for $\ol c = 8(\tau+2)$ sufficiently large and
$$
	A_{n,z} = \{ \cP(Q_z) \le d_n \}, \quad z\in\R^d.
$$
Then by Lemma~\ref{Lemma0}
\begin{align*}
	\p( A_{n,z}^c ) = \p( \cP( Q_0) > d_n ) \le 2  \exp\Big( - \frac{(d_n - 1)^2}{2d_n} \Big).
\end{align*}
Let $\ul z$ be the element in $B_n$ which satisfies $\ul z \preceq y$ for all $y\in B_n$. Set
\[
			B_{n,1} = \{ \ul z + \ceil{2r+\sqrt{d}+1} (k_1,\ldots,k_d)^T \in B_n : k_1,\ldots,k_d \in \N_0 \},
\]
where the superscript $v^T$ denotes the transpose of a vector $v\in\R^d$.

Set $S=\ceil{2r+\sqrt{d}+1}^d$.
Let $B_{n,2},\ldots,B_{n,S}$ be the $S-1$ translations of the type $s+ B_{n,1}$ of $B_{n,1}$ for $s \in \{0,1,\ldots,\ceil{2r+\sqrt{d}+1} -1\}^d \setminus \{0\} $. Then $B_{n,1},\ldots,B_{n,S}$ partition $B_n$.

Define for $z\in B_n$
\begin{align*}
	\Xi_{r}(z,n) &= \sum_{y\in \cPc_n } \1{ \xi_r(y,\cP_n) \in J} \1{y\in Q_z} \ge 0, \\
	\Xi_{1,r}(z,n) &= \Xi_{r}(z,n) \1{A_{n,z}} \text{ as well as }	\Xi_{2,r}(z,n) = \Xi_{r}(z,n) \1{A_{n,z}^c }.
\end{align*}
Then set
\[
	\Sigma_1 =\sum_{j=1}^S \sum_{z\in B_{n,j}} \Xi_{1,r}(z,n) \text{ and } \Sigma_2 = \sum_{j=1}^S \sum_{z\in B_{n,j}} \Xi_{2,r}(z,n).
\]
We have
\[
		\sum_{y\in \cPc_n } \1{ \xi_r(y,\cP_n) \in J} =  \Sigma_1 + \Sigma_2.
\]

First, we consider $|\Wc_n|^{-1} (\Sigma_2-\E[\Sigma_2])$ and show that this term is negligible. Let $\epsilon_0 > 0$, then we obtain the following upper bound which is uniform over all intervals $J$
\begin{align*}
	&\p( | \Sigma_2 - \E[\Sigma_2] | \ge \epsilon_0 n^{-2} |\Wc_n| )	\\
	&\le 2 \frac{n^2}{\epsilon_0 |\Wc_n|} \ \E[ \Sigma_2 ] \\
	&\le 2 \frac{n^2}{\epsilon_0 |\Wc_n|}  \sum_{j=1}^S \sum_{z\in B_{n,j}} \E[ \cPc_n(Q_z) \1{A_{n,z}^c} ] \\
	&\lesssim  n^2 \E[ \cPc_n(Q_0)^2]^{1/2} \p( A_{n,0}^c )^{1/2} \\
	& \lesssim n^2 \exp\Big( - \frac{(d_n - 1)^2}{4d_n} \Big).
\end{align*}
In particular, choosing $\ol c$ sufficiently large as above, we find in (i) and (ii)
$$
	\p\big( |\Wc_n|^{-1} \big| \Sigma_2 - \E[\Sigma_2] \big| \ge \epsilon_0 n^{-2} \big) = O( n^{-\tau} ).
$$
This shows that $|\Wc_n|^{-1} (\Sigma_2 - \E[\Sigma_2])$ is negligible.

We come to $\Sigma_1 - \E[\Sigma_1]$. The crucial observation is now that $\{ \Xi_{1,r}(z,n): z \in A \}$ and $\{ \Xi_{1,r}(z,n): z\in A' \}$ are independent whenever $\|z - z'\|_\infty \ge 2r+\sqrt{d}+1$ for all $z\in A$ and $z'\in A'$ because the functionals $\xi_r$ act in the $r$-neighborhood of a Poisson point $y\in Q_z$, resp. $y'\in Q_{z'}$, and  
\[
	\| y - y' \| \ge \|z-z' \| - \|y - z\| - \|y' - z' \| \ge \|z - z'\|_\infty  - 2 \frac{\sqrt{d}}{2} \ge 2r+1 > 2r.
\]

Set $\sigma_{n,z}^2 = \V( \Xi_{1,r}(z,n) )$. 
Depending on the setting, we choose the sequence $(u_n)_n$ suitably, see below. Then we use that $\Xi_{1,r}(z,n) \le d_n$ to apply the Bernstein inequality as follows
\begin{align}
	&\p\big( \big| \Sigma_1 - \E[\Sigma_1] \big| \ge  u_n  |\Wc_n| \big) \nonumber \\
	&\le \sum_{j=1}^{S } \p\Big( \Big| \sum_{z\in B_{n,j}}  \Xi_{1,r}(z,n)- \E[ \Xi_{1,r}(z,n) ] \Big| \ge  u_n |\Wc_n|/S \Big) \nonumber \\
	&\le 2  \sum_{j=1}^{S } \exp\bigg\{ - \frac{1}{2} \frac{ ( u_n  |\Wc_n| /S )^2 }{ \sum_{z\in B_{n,j} } \sigma_{n,z}^2 + d_n  u_n  |\Wc_n| / (3 S ) }	\bigg\},  \label{E:ExpIneqPoisson11}
\end{align}
We have $\# B_{n,j} \lesssim n r^{-d}$ both in (i), (i') and in (ii). Regarding the variances, we have different estimates in (i), (i') and (ii), which then in turn lead to different rates.

We begin with (i), where the interval $J\subseteq I$ satisfies $|J| = |x-y| \le a_n$ and where we set
$$
	u_n = \epsilon_1 ( (\log n)^{2+d/\as} a_n / n)^{1/2}.
$$
Let $q = \ul c \log n$, for $\ul c > 0$ sufficiently large. We apply Lemma~\ref{Lemma3} in combination with Lemma~\ref{Lemma0} to obtain the following uniform upper bound
\begin{align}
	\sigma_{n,z}^2 &\le \E\Big[ \Xi_{1,r}(z,n)^2 \Big]   \nonumber\\
	&\lesssim (1+q) \ \p( \xi_r(0,\cP \cup \{0\} ) \in J ) +  2 \exp\Big( - \frac{\floor{q}^2}{2(\floor{q}+1)} \Big)  \nonumber\\
	&\lesssim \log n \ ( \p( \xi(0,\cP \cup \{0\} ) \in J ) + n^{-4} ) \nonumber \\
	&\lesssim \log n \ (a_n + n^{-4}) \label{E:ExpIneqPoisson13};
\end{align}
the second to last inequality holds because $r \asymp (\log n)^{1/\as}$ and because when switching from $\xi_r(0,\cP \cup \{0\} )$ to $\xi(0,\cP \cup \{0\} )$ the error is of order $e^{-c_{stab}r^{\as}} \le n^{-4}$. For the last inequality, we use that the probability is of order $|J|$, which is uniformly bounded above by $a_n$.

Consequently in (i), using \eqref{E:ExpIneqPoisson11} there is a $c_1\in\R_+$, which does not depend on the choice of $(a_n)_n$ and $\epsilon_1$ such that
\begin{align*}
		&\p\Big( \Big| \sum_{z\in B_n} \Xi_{1,r}(z,n) - \E[\Xi_{1,r}(z,n)] \Big| \ge  u_n  |\Wc_n| \Big) \\
		&\lesssim r^d \exp\Big( - c_1 \frac{\epsilon_1^2 \log n}{ 1 + 1/(a_n n^4) + \epsilon_1 (\log n)^{1+d/(2\as) }/(a_n n)^{1/2} } \Big)\\
		&\lesssim  r^d \exp\Big(- c_1 \frac{\epsilon_1^2 \log n}{1+o(1+\epsilon_1) } \Big).
\end{align*}
In particular, choosing $\epsilon_1$ sufficiently large (depending on $c_1$), implies the statement in (i).

For the statement in (i'), we rely additionally on Lemma~\ref{Lemma5} and Lemma~\ref{Lemma6}. We derive once more an upper bound which is valid uniformly in $1\le j \le S$. We use the decomposition 
\begin{align}\label{E:ExpIneqPoisson12}
	\sum_{z\in B_{n,j}} \sigma_{n,z}^2 &= \sum_{ \substack{ z\in B_{n,j}, \\   Q_z \not\subseteq \Wc_n}} \sigma_{n,z}^2  + \sum_{\substack{z\in B_{n,j}\\ Q_z \subseteq \Wc_n}} \sigma_{n,z}^2 .
\end{align}	
For the $\sigma^2_{n,z}$ in first sum in \eqref{E:ExpIneqPoisson12}, we rely on the upper bound in \eqref{E:ExpIneqPoisson13}.

The $\sigma^2_{n,z}$ in the second sum in \eqref{E:ExpIneqPoisson12} are treated with Lemma~\ref{Lemma5} and Lemma~\ref{Lemma6}:
\begin{align}
		\sigma^2_{n,z} &\le \E\Big[ \Xi_{1,r}(z,n)^2 \Big]   \nonumber\\
		&\le c_1[ \p(  \xi_r(0,\cP \cup \{0,Y\} ) \in J ) + \p(  \xi_r(0,\cP \cup \{0\} ) \in J ) ] \nonumber \\
		&\le c_1 [ \p(  \xi(0,\cP \cup \{0,Y\} ) \in J ) + \p(  \xi(0,\cP \cup \{0\} ) \in J ) ] + c_2 n^{-2} \nonumber \\
		&\lesssim a_n + n^{-2}, \label{E:ExpIneqPoisson14}
\end{align}
for $c_1,c_2\in\R_+$ which do not depend on $n$, $r$ and $I$; the last inequality follows because both probabilities are of order $|J| \lesssim a_n$.

Combining the estimates in \eqref{E:ExpIneqPoisson13} and \eqref{E:ExpIneqPoisson14}, we obtain for the sum in \eqref{E:ExpIneqPoisson12}
\begin{align*}
	&\sum_{ \substack{ z\in B_{n,j}, \\   Q_z \not\subseteq \Wc_n}} \sigma_{n,z}^2  + \sum_{\substack{z\in B_{n,j}\\ Q_z \subseteq \Wc_n}} \sigma_{n,z}^2 \\
	&\lesssim \frac{n^{(d-1)/d} r}{r^d} \log n \ (a_n+n^{-4})+ \frac{n}{r^d} (a_n + n^{-2}) \lesssim \frac{n}{r^d} a_n.
\end{align*} 
Setting this time
$$
	u_n = \epsilon_2 ( (\log n)^{1+d/\as} a_n / n)^{1/2},
$$
for $\epsilon_2\in\R_+$, yields that  \eqref{E:ExpIneqPoisson11} is of order $r^d \exp( - c_2 \epsilon_2^2 \log n / (1 + o(\epsilon_2) ))$ for a certain $c_2\in\R_+$. Choosing $\epsilon_2$  sufficiently large shows the statement (i').

We continue with \eqref{E:ExpIneqPoisson11} for the statement in (ii). In this case $J = (-\infty,x]$ for some $x\in I$. Then clearly
$
	\sigma^2_{n,z} \lesssim 1.
$ Consequently, setting 
\[
	u_n = \epsilon_3 (\log n)^{(1+d/\as)/2} n^{-1/2} ,
	\]
 we obtain that  \eqref{E:ExpIneqPoisson11} is of order $r^d \exp( - c_3 \epsilon_3^2 \log n / (1 + o(\epsilon_3) ))$ for a certain $c_3\in\R_+$. Setting $\epsilon_3$ sufficiently large, we obtain the statement in (ii). 
\end{proof}

We continue with the main result necessary for the Bahadur representation.
\begin{proposition}\label{P:UniformConv}
 Let $z\in\R$ and $I_n = (z-a_n, z + a_n)$ for $n\in\N$, where the sequence $(a_n)_n$ is positive and bounded. Set for $x\in\R$
$$
	G_n (x) = \{ \wh F_n(x) - \wh F_n(z) \} - \{ F(x) - F(z) \}.
$$
\begin{itemize}
	\item [(i)] Let $(a_n)_n$ satisfy $(\log n)^{2+d/\as} = o(a_n n)$. Let $F$ be differentiable in some neighborhood of $z$ with $f$ being bounded in this neighborhood. There is a $c_1 \in \R_+$, which does not depend on the choice of $(a_n)_n$, such that for $\gamma=2$
\begin{align}\label{E:UniformConv0}
	\p \Big( \sup_{x\in I_n} |G_n (x)| \ge  c_1 \Big( \frac{(\log n)^{\gamma+d/\as} a_n }{n} \Big)^{1/2} \Big) = O(n^{-2}).
\end{align}\\[-8pt]
\item [(i')]
Let $(a_n)_n$ satisfy instead $(\log n)^{1+d/\as} = o(a_n n)$.
Additionally to (i) assume that $F^e$ from \eqref{E:DistributionExt} is differentiable in some neighborhood of $z$ and admits a bounded density $f^e$ in this neighborhood. There is a $c_1\in\R_+$, which does not depend on the choice of $(a_n)_n$, such that we have the result in \eqref{E:UniformConv0} for $\gamma=1$.\\[-4pt]
	\item [(ii)] Let $(a_n)_n$ satisfy $(\log n)^{2+d/\as} = o(a_n n)$. Let $I=(a,b)\subseteq\R$. Let $F$ be differentiable on $I$ with $\sup_{x\in I} f(x)  < \infty$. There is a $c_2 \in \R_+$, which does not depend on the choice of $(a_n)_n$, such that for $\gamma=2$
\begin{align}\begin{split}\label{E:UniformConv0B}
	&\p \Big( \sup_{\substack{x,y\in I\\ |x-y|\le a_n}} | G_n(x)-G_n(y)| \ge  c_2 \Big( \frac{(\log n)^{\gamma+d/\as} a_n }{n} \Big)^{1/2} \Big)\\
	& = O(n^{-2}).
\end{split}
\end{align}
(Note that the difference $G_n(x)-G_n(y)$ does not depend on the point $z$.)\\[-4pt]
\item [(ii')]
Let $(a_n)_n$ satisfy instead $(\log n)^{1+d/\as} = o(a_n n)$.
Assume additionally to (ii) that $F^e$ from \eqref{E:DistributionExt} admits a density $f^e$ on $I$ such that $\sup_{x\in I} f^e(x) <\infty$. There is a $c_2\in\R_+$, which does not depend on $(a_n)_n$, such that we have the result in \eqref{E:UniformConv0B} for $\gamma=1$.\\[-4pt]
	\item [(iii)] Let $I=(a,b)\subseteq\R$. Let $F$ be differentiable on $I$ with $\sup_{x\in I} f(x)  < \infty$. There is a $c_3\in\R_+$ such that
	$$
		\p \Big( \sup_{x\in I} |\wh F_n(x) - F(x) | \ge c_3 \Big( \frac{(\log n)^{1+d/\as}}{n} \Big)^{1/2} \Big)  = O(n^{-5/2}).
	$$
\end{itemize}
\end{proposition}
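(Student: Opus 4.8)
The plan is to reduce each part to the exponential inequalities of Proposition~\ref{P:ExpIneqPoisson} via a chaining/discretization argument over the interval $I_n$ (resp.\ $I$), exactly as in the classical derivation of uniform Bahadur bounds. First I would pass from the untruncated quantities to the truncated ones: since $\p(R(0,\cP\cup\{0\})\ge r)\le C_{stab}e^{-c_{stab}r^{\as}}$ with $r=(c^*\log n)^{1/\as}$ and $c^*\ge 4/c_{stab}$, a union bound over the $O(n)$ points of $\cPc_n$ shows that with probability $1-O(n^{-3})$ we have $\xi(y,\cP_n)=\xi_r(y,\cP_n)$ for every $y\in\cPc_n$ (one uses the strong stabilization property \eqref{E:Stabilization1}, noting that for $y\in\Wc_n$ the ball $B(y,r)$ lies in $W_n$). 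On that event $\wh F_n\equiv\wh F_{n,r}$, and since $|\Wc_n|/n\to1$ the difference between $\wh H_{n,r}(x)=\frac{\Mc_n}{|\Wc_n|}\wh F_{n,r}(x)$ and $\Mc_n$-normalized quantities is absorbed into the constants (the event $\{\Mc_n=0\}$ has probability $e^{-|\Wc_n|}=O(n^{-\tau})$ for every $\tau$, and on $\{\Mc_n>0\}$ one has $\Mc_n\asymp n$ off an event of probability $O(n^{-\tau})$ by a Bernstein bound for $\cP(\Wc_n)$). This lets me replace $G_n$ everywhere by the centered truncated increments controlled in \eqref{E:ExpIneqPoisson0}.

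Next, for part (ii) (and similarly (ii')), I would discretize: partition $I=(a,b)$ into $\lceil (b-a)/a_n\rceil\asymp a_n^{-1}\lesssim n$ subintervals of length $\le a_n$ with endpoints $y_0<y_1<\dots<y_N$. For $x,y\in I$ with $|x-y|\le a_n$ there are at most two grid points between them, so monotonicity of $\wh F_n$ and continuity of $F$ on $I$ give
\begin{align*}
|G_n(x)-G_n(y)| &\le \max_{|k-\ell|\le 1}|G_n(y_k)-G_n(y_\ell)| + 2\sup_{|s-t|\le a_n}|F(s)-F(t)|\\
&\le \max_{|k-\ell|\le 1}|G_n(y_k)-G_n(y_\ell)| + C a_n,
\end{align*}
where the $Ca_n$ term is $o\big(((\log n)^{\gamma+d/\as}a_n/n)^{1/2}\big)$ because $(\log n)^{\gamma+d/\as}=o(a_n n)$ by hypothesis (here $\gamma\ge1$, so this is implied by the stated condition on $(a_n)_n$). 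Applying \eqref{E:ExpIneqPoisson0} with the sequence $2a_n$ to each of the $O(n)$ consecutive pairs $(y_k,y_{k+1})$ — after noting $\E[\wh H_{n,r}(x)-\wh H_{n,r}(y)]$ equals $\frac{\Mc_n}{|\Wc_n|}$ times something close to $F(x)-F(y)$, with the discrepancy again negligible of the same order via Lemma~\ref{Lemma0}-type computations — and taking a union bound multiplies the $O(n^{-\tau})$ probability by $O(n)$; choosing $\tau=3$ yields the claimed $O(n^{-2})$. Part (i)/(i') follows the same scheme applied to the one-sided increments $G_n(x)$ with the fixed base point $z$, using part (i)/(i') of Proposition~\ref{P:ExpIneqPoisson} (note the condition $(\log n)^{2+d/\as}=o(a_n n)$ for $\gamma=2$, resp.\ $(\log n)^{1+d/\as}=o(a_n n)$ for $\gamma=1$, is exactly what makes the deterministic $Ca_n$ error subdominant).

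For part (iii), the target probability is smaller ($O(n^{-5/2})$) and the supremum is over all of $I$ without a localization, so I would combine a coarser grid with the bound in Proposition~\ref{P:ExpIneqPoisson}(ii). Discretize $I$ into $\asymp n^{1/2}$ points $y_0<\dots<y_N$ spaced $\asymp n^{-1/2}$ apart; by monotonicity of $\wh F_n$ and boundedness of $f$ on $I$, $\sup_{x\in I}|\wh F_n(x)-F(x)|\le \max_k|\wh F_n(y_k)-F(y_k)| + C n^{-1/2}$, and $n^{-1/2}=o\big(((\log n)^{1+d/\as}/n)^{1/2}\big)$. Apply Proposition~\ref{P:ExpIneqPoisson}(ii) at each $y_k$ (again converting between $\wh H_{n,r}$ and $\wh F_n$ as above, and absorbing $\E\wh H_{n,r}(y_k)-F(y_k)=o(\cdots)$), with $\tau$ chosen so that $n\cdot n^{-\tau}=O(n^{-5/2})$, i.e.\ $\tau=7/2$; the union bound over $\asymp n^{1/2}$ points then even gives $O(n^{-3})$, which suffices. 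The main obstacle throughout is not the chaining — that is routine — but making rigorous the passage between the three flavors of empirical process ($\wh F_n$ with random normalization $\Mc_n$, $\wh F_{n,r}$, and the expectation-centered $\wh H_{n,r}$ of Proposition~\ref{P:ExpIneqPoisson}): one must check that the truncation error, the replacement of $\Mc_n/|\Wc_n|$ by $1$, and the bias $\E[\wh H_{n,r}]-F$ are each $o$ of the claimed rate with overwhelming probability, and that these controls are uniform in the interval, which is where the exponential stabilization \eqref{E:Stabilization2} and the density bounds (via Lemmas~\ref{Lemma0}, \ref{Lemma3}, \ref{Lemma5}, \ref{Lemma6}) are used.
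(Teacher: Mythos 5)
Your overall architecture (discretize, pass to the truncated and expectation-centered process, invoke Proposition~\ref{P:ExpIneqPoisson}, union bound) matches the paper's, and your treatment of part (iii) is sound. However, there is a genuine error in the discretization step for parts (i), (i'), (ii), (ii'): you take a grid of mesh $\asymp a_n$ and claim the resulting deterministic interpolation error $Ca_n$ (coming from $\sup_{|s-t|\le a_n}|F(s)-F(t)|\le \|f\|_\infty a_n$) is $o\big(((\log n)^{\gamma+d/\as}a_n/n)^{1/2}\big)$ ``because $(\log n)^{\gamma+d/\as}=o(a_n n)$''. This implication is exactly backwards. The hypothesis gives $a_n n/(\log n)^{\gamma+d/\as}\to\infty$, hence
\[
\frac{a_n}{\big((\log n)^{\gamma+d/\as}a_n/n\big)^{1/2}}=\Big(\frac{a_n n}{(\log n)^{\gamma+d/\as}}\Big)^{1/2}\longrightarrow\infty,
\]
so $Ca_n$ \emph{dominates} the claimed rate rather than being negligible; in the intended application ($a_n\asymp n^{-1/2}(\log n)^{1/2+d/(2\as)}$) it is larger by a factor of order $n^{1/4}$. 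Since the monotone sandwich $\wh F_n(y_k)-F(y_{k+1})\le \wh F_n(x)-F(x)\le \wh F_n(y_{k+1})-F(y_k)$ unavoidably produces an error of order $f\cdot(\text{mesh})$, no bookkeeping rescues a mesh of size $a_n$.

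The fix is to refine the grid: the paper uses mesh $a_n/b_n$ with $b_n\asymp n$, so the deterministic error is $O(a_n/n)=O(1/n)$, which is genuinely $o$ of the target rate (cf.\ \eqref{E:Remainder}). The price is a union bound over $\asymp b_n\asymp n$ points in part (i) and over $\asymp b_n/a_n\lesssim n^2$ pairs in part (ii), which is why Proposition~\ref{P:ExpIneqPoisson} is stated for arbitrary $\tau>1$ and must be invoked with $\tau\ge 3$, respectively $\tau\ge 4$; your choice $\tau=3$ would not survive an $n^2$-point union bound in part (ii). Note that \eqref{E:ExpIneqPoisson0} is still applied with the sequence $(a_n)_n$ even though the grid intervals have length $a_n/b_n\le a_n$ --- the bound is uniform over pairs at distance at most $a_n$, so nothing is lost. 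Your remaining reductions (truncation via stabilization, replacing $\Mc_n/|\Wc_n|$ by $1$, controlling the bias $\E[\wh H_{n,r}]-F$ via Slivnyak--Mecke and Lemma~\ref{Lemma0}) agree with the paper and are fine.
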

\begin{proof}
We begin with (i), (ii) is then an immediate consequence. In the last part of the proof, we show (iii) separately. The variants (i') and (ii') follow in a similar spirit as the original versions (i) and (ii) making use of Proposition~\ref{P:ExpIneqPoisson} (i') instead of part (i) there, so we do not give a separate proof for these.

 Note that given the condition on the positive and bounded sequence $(a_n)_n$ we have in each scenario that $1/n \lesssim a_n \lesssim 1$.

First we discretize the supremum as follows: Let $(b_n)_n\subseteq \N$ be a sequence of integers such that $b_n \asymp n$. Let $\eta_{k,n} = z + k a_n / b_n$ for $k\in\{-b_n,\ldots,b_n\}$ be a grid of $(z - a_n, z + a_n)$. If $x\in [\eta_{k,n},\eta_{k+1,n}]$, then using that $F$ is an increasing function in a neighborhood of $z$, we arrive at (given $n$ is sufficiently large)
\begin{align*}
	G_n (\eta_{k,n}) - \alpha_{k,n} \le G_n (x) \le G_n (\eta_{k+1,n}) + \alpha_{k,n},
\end{align*}
where $\alpha_{k,n} = F(\eta_{k+1,n} ) - F(\eta_{k,n}) \lesssim a_n / b_n$ uniformly for $x\in I_n$. Hence,
\begin{align}\label{E:UnifConv0}
	\sup_{x\in I_n} |G_n (x)| \le \max_{k: |k|\le b_n} |G_n (\eta_{k,n})| +  \max_{k: |k|\le b_n} \alpha_{k,n},
\end{align}
where the cardinality of the index set $\{k: |k|\le b_n \}$ is of order $n$ and the second maximum on the right-hand side of \eqref{E:UnifConv0} is deterministic and satisfies
\begin{align}
\begin{split}\label{E:Remainder}
	&\frac{1}{n^2} \lesssim \max_{k: |k|\le b_n} \alpha_{k,n} \lesssim \frac{a_n}{b_n}\\
	& \lesssim \frac{1}{n} \lesssim \frac{ (\log n)^{1+d/(2\as)} }{n} \lesssim \Big(\frac{ (\log n)^{2+d/\as} a_n }{n}\Big)^{1/2} .
\end{split}\end{align}
In particular, the second maximum is negligible in the further analysis.

Next, we study the first maximum on the right-hand side in \eqref{E:UnifConv0}. For this purpose, we split $|G_n (x)|$ as follows. Denote by $J_x$ the interval $(z,x]$ or $(x,z]$ depending on the position of $x$ relative to $z$. Then for each $x$
\begin{align}
	|G_n (x)| &= | \wh F_n(x) - \wh F_n(z) - F(x) + F(z) | \nonumber  \\
	&\le \Big | \big( \wh F_n(x) - \wh F_n(z) \big) - \big( \wh F_{n,r}(x) - \wh F_{n,r}(z) \big)	\Big | \label{E:UnifConv1} \\
	\begin{split}\label{E:UnifConv2}
	&\quad +  \frac{|\Wc_n|}{\Mc_n} \Big | \wh H_{n,r}(x)-\wh H_{n,r}(z) - \E[ \wh H_{n,r}(x)-\wh H_{n,r}(z) ] \Big|
	\end{split}\\
	\begin{split}\label{E:UnifConv3}
	&\quad + \Big| \frac{1}{\Mc_n} \E\Big[\sum_{y\in \cPc_n}  \1{\xi_r(y,\cP_n)\in J_x} \Big] \\
	&\quad\qquad\qquad\qquad - \E\Big[ \1{\xi_r(0,\cP\cup\{0\})\in J_x} \Big] \Big|
	\end{split}\\
	\begin{split}\label{E:UnifConv4}
	&\quad + \Big | \E\Big[ \1{\xi_r(0,\cP\cup\{0\}) \in J_x} -  \1{\xi(0,\cP\cup\{0\}) \in J_x} \Big] \Big |.
	\end{split}
\end{align}
(Note that the second term in \eqref{E:UnifConv2} and the first term in \eqref{E:UnifConv3} can be infinite. However, this is not a hindrance in the following because it only occurs with a probability which vanishes exponentially.)

For each $x$, the term in \eqref{E:UnifConv4} is at most
\[
	\E[ \1{\xi_r(0,\cP\cup\{0\}) \neq \xi(0,\cP\cup\{0\}) } ] \le C_{stab} \exp( -c_{stab} r^{\as} ) \lesssim n^{-4}
\] 
and is negligible in the first maximum in \eqref{E:UnifConv0}. 

Moreover, by Lemma~\ref{Lemma1} the term in \eqref{E:UnifConv1} satisfies for each $\epsilon >0$
\[
	\p\Big( \sup_{x\in\R} \Big | \big( \wh F_n(x) - \wh F_n(z) \big) - \big( \wh F_{n,r}(x) - \wh F_{n,r}(z) \big)	\Big | > \epsilon n^{-2} \Big) = O(n^{-2})
\]
and is as well negligible in the maximum in \eqref{E:UnifConv0}.

The term in \eqref{E:UnifConv3} admits an upper bound which is uniform in $x \in I_n$ and negligible, so taking the maximum does not have an effect. To see this, we rely on the Slyvniak-Mecke formula:
\begin{align*}
	 \E\Big[\sum_{y\in \cPc_n}  \1{\xi_r(y,\cP_n)\in J_x} \Big]  &= \int_{\Wc_n} \E\Big[ \1{	\xi_r(y, \cPc_n \cup \{ y\} \cup \cPd_n) \in J_x } \Big] \diff y \\
	 &= |\Wc_n| \ \p( \xi_r(0,\cP\cup\{0\}) \in J_x),
\end{align*}
where as before for all $x\in I_n$
\[
	 \p( \xi_r(0,\cP\cup\{0\}) \in J_x) \le  \p( \xi(0,\cP\cup\{0\}) \in J_x) + O( \exp(-c_{stab} r^{\as}) ) \lesssim a_n + n^{-4}.
\]
So there are $c,c'\in\R_+$, which do not depend on $(a_n)_n$, such that for $\epsilon>0$ sufficiently large
\begin{align*}
	&\p\Big( \sup_{x\in I_n} \Big| \frac{1}{\Mc_n} \E\Big[\sum_{y\in \cPc_n}  \1{\xi_r(y,\cP_n)\in J_x} \Big] \\
	&\qquad\qquad - \E\Big[ \1{\xi_r(0,\cP\cup\{0\})\in J_x} \Big] \Big| \ge \frac{\epsilon a_n (\log n)^{1/2}}{ n^{1/2} } \Big ) \\
	&\le \p\Big(  \sup_{x\in I_n} | |\Wc_n| - \Mc_n | \p( \xi_r(0,\cP\cup\{0\}) \in J_x) \ge \frac{\epsilon a_n \Mc_n (\log n)^{1/2}}{ n^{1/2}} \Big) \\
	&\le \p\Big( | |\Wc_n| - \Mc_n | \ge c \frac{ \epsilon a_n |\Wc_n| (\log n)^{1/2}}{n^{1/2}(a_n + n^{-4}) }  \Big) + \p\Big( \frac{ \Mc_n }{ |\Wc_n | } < \frac{1}{2} \Big) \\
	&\le  \p\Big( | |\Wc_n| - \Mc_n | \ge c' \frac{ \epsilon  n^{1/2} (\log n)^{1/2} }{1 + o(1) }  \Big) + 2\exp\Big( - \frac{|\Wc_n|}{12}		\Big) = O( n^{-2} ).
\end{align*}
The last estimate is a consequence of Lemma~\ref{Lemma0} given $\epsilon>0$ is sufficiently large. Since $(a_n)_n$ is bounded and
\[
	\frac{a_n (\log n)^{1/2}}{n^{1/2}} \lesssim \Big(\frac{(\log n)^{1+d/\as} a_n}{n} \Big)^{1/2}
\]
this term is negligible as well.

We come to the dominating term \eqref{E:UnifConv2}, which is treated in detail in Proposition~\ref{P:ExpIneqPoisson} (i). For each $\tau>1$ there is constant $C_{\tau}\in\R_+$ such that for all positive bounded sequences $(a_n)_n$ which satisfy $(\log n)^{2+d/\as} = o(a_n n)$,
\begin{align*}
	&\p\Big( \max_{k: |k|\le b_n} \big| \wh H_{n,r}(\eta_{k,n}) - \wh H_{n,r}(z) - \E[\wh H_{n,r}(\eta_{k,n}) - \wh H_{n,r}(z)] \big| \\
	&\ge C_{\tau} \Big( \frac{(\log n)^{2+d/\as} a_n }{n} \Big)^{1/2} \Big) = O( b_n n^{-\tau} ).
\end{align*}
Moreover, the event $\{|\Wc_n| / \Mc_n > 2\}$ is negligible because we have by Lemma~\ref{Lemma0} $\p(\Mc_n / |\Wc_n|  < 1/2) \le 2 \exp( - |\Wc_n| / 12)$. Consequently for the term in \eqref{E:UnifConv2} there is a $C_3\in\R_+$ such that
\begin{align*}
		&\p\Big( \max_{k: |k|\le b_n} \frac{|\Wc_n|}{\Mc_n} \Big | \wh H_{n,r}(\eta_{k,n})-\wh H_{n,r}(z) - \E[ \wh H_{n,r}(\eta_{k,n})-\wh H_{n,r}(z) ] \Big| \\
		&\qquad\qquad\qquad \ge C_{3} \Big( \frac{(\log n)^{2+d/\as} a_n }{n} \Big)^{1/2} \Big) = O(n^{-2}) .
\end{align*}
Finally, we come back to the first maximum on the right-hand side of \eqref{E:UnifConv0}. Using the preceding estimates, there is a $c\in\R_+$ such that
\begin{align*}
	 &\p\Big( \max_{k: |k|\le b_n} |G_n (\eta_{k,n})| \ge c \Big( \frac{(\log n)^{2+d/\as} a_n }{n} \Big)^{1/2}\Big) = O(n^{-2})
\end{align*}
for each $(a_n)_n$, which satisfies $(\log n)^{2+d/\as} = o(a_n n)$.
This shows (i).

For (ii), let once more $b_n \asymp n$. We cover $I$ using another sequence $(\wt\eta_{k,n})_{k=0}^{K+1}$, where this time $K = \ceil{(b-a)b_n/a_n}$ and $\wt\eta_{k,n} = a + k a_n/b_n$ for $k\in\Z$. So, $I \subseteq \cup_{k=0}^K [\wt\eta_{k,n},\wt\eta_{k+1,n}]$. Let $x\in [\wt\eta_{k,n},\wt\eta_{k+1,n}]$ (for one $1\le k \le K-1$) and $y\in I$ such that $|x-y|\le a_n/b_n$, then
\begin{align*}
	&\wh F_n(x) - \wh F_n(\wt\eta_{k+2,n}) - F(x) + F(\wt\eta_{k+2,n}) \\
	&\qquad\qquad + (F(\wt\eta_{k-1,n})-F(\wt\eta_{k+2,n})) \\
	&\le G_n(x)-G_n(y) \\
	&\le \wh F_n(x) - \wh F_n(\wt\eta_{k-1,n}) - F(x) + F(\wt\eta_{k-1,n}) \\
	&\qquad\qquad + (F(\wt\eta_{k+2,n})-F(\wt\eta_{k-1,n})).
\end{align*}
Similar upper bounds are valid if $x\in [\wt\eta_{0,n},\wt\eta_{1,n}]$ or $x\in [\wt\eta_{K,n},\wt\eta_{K+1,n}]$.
Consequently, we find with similar considerations as in the first part that
\begin{align*}
	&\sup_{x,y\in I} | G_n(x)-G_n(y)| \\
	&\le \max_{k\in \{0,\ldots,K+1\}} \sup_{x\in [\wt\eta_{k-1,n}, \wt\eta_{k+2,n}] } | \wh F_n(x) - \wh F_n (\wt\eta_{k,n}) - F(x) + F(\wt\eta_{k,n}) | + O(a_n/b_n) \\
	&\le \max_{k\in \{0,\ldots, K+1\} }  \Big\{ | \wh F_n(\wt\eta_{k+2,n}) - \wh F_n (\wt\eta_{k,n}) - F(\wt\eta_{k+2,n}) + F(\wt\eta_{k,n}) |, \\
	&\qquad\qquad | \wh F_n(\wt\eta_{k,n}) - \wh F_n (\wt\eta_{k-1,n}) - F(\wt\eta_{k,n}) + F(\wt\eta_{k-1,n}) | \Big\} + O(a_n/b_n) ,
\end{align*}
where the remainder is deterministic, uniform in $k$ and in $n$ and satisfies similar inequalities as in \eqref{E:Remainder}.

Since each $[\wt\eta_{k-1,n},\wt\eta_{k+2,n}]$ is of order $a_n/b_n \lesssim a_n$, we can apply the estimate from part (i) to the first term, using a decomposition as in \eqref{E:UnifConv1} to \eqref{E:UnifConv4}. Note that the results here are indeed uniform in the position $\wt\eta_{k,n}$: The upper bounds on the terms corresponding to those in \eqref{E:UnifConv1}, \eqref{E:UnifConv3} and \eqref{E:UnifConv4} are clearly uniform in the $\wt\eta_{k,n}$. Regarding the dominating term which corresponds to \eqref{E:UnifConv2}, we can once more rely on Proposition~\ref{P:ExpIneqPoisson} (i) because the number of intervals $[\wh\eta_{k,n},\wh\eta_{k+2,n}], [\wh\eta_{k-1,n},\wh\eta_{k,n}]$ is of order $b_n/a_n \lesssim n^{2}$. This shows (ii).

For (iii) we proceed very similarly. Define $u_n = n^{-1/2} (\log n)^{1/2+d/(2\as) }$ and $\eta'_{k,n} = a + k u_n$ for $k\in \N_0$. We can cover $I$ with $\cup_{k=0}^K [\eta'_{k,n},\eta'_{k+1,n}]$ for $K\lesssim u_n^{-1}$. Using that $\sup_{x\in I} F'(x) < \infty$, we see that
$$
	\sup_{x\in I} |\wh F_n(x) - F(x) | \le \max_{k\in \{1,\ldots,K+2\} } |\wh F_n(\eta'_{k-1,n}) - F(\eta'_{k-1,n}) | + O( u_n ). 
$$
Then, we rely on the decomposition
\begin{align}
	&|\wh F_n(x) - F(x)| \nonumber\\
	&\le |\wh F_n(x) - \wh F_{n,r}(x)| \label{E:UnifConv5} \\
	\begin{split}\label{E:UnifConv6}
	&\quad + \frac{|\Wc_n|}{\Mc_n} \Big | \wh H_{n,r}(x)  - \E\Big[\wh H_{n,r}(x) \Big]	\Big|
	\end{split}\\
	\begin{split}\label{E:UnifConv7}
	&\quad + \Big| \frac{1}{\Mc_n} \E\Big[\sum_{y\in \cPc_n}  \1{\xi_r(y,\cP_n)\le x} \Big] - \E\Big[ \1{\xi_r(0,\cP\cup\{0\})\le x} \Big] \Big|
	\end{split}\\
	&\quad + | \E[ \1{\xi_r(0,\cP\cup \{0\}) \le x} - \1{\xi(0,\cP\cup \{0\}) \le x} ] | \label{E:UnifConv8}
\end{align}
for each $x\in \R$. (Again the term in \eqref{E:UnifConv6} and the term in \eqref{E:UnifConv7} can be infinite.)

The term in \eqref{E:UnifConv8} corresponds to \eqref{E:UnifConv4} and is $O(n^{-4})$ uniformly in $x$. Similarly, the term in \eqref{E:UnifConv5} corresponds to \eqref{E:UnifConv1} and satisfies again for each $\epsilon>0$
\[
	\p\big( \sup_{x\in\R} \big| \wh F_n(x)  - \wh F_{n,r}(x) \big) | > \epsilon n^{-2} \big) = O(n^{-2}).
\]
The term in \eqref{E:UnifConv7} corresponds to \eqref{E:UnifConv3} and satisfies for $\epsilon>0$ sufficiently large
\begin{align*}
	&\p\Big( \sup_{x\in\R} \Big| \frac{1}{\Mc_n} \E\Big[\sum_{y\in \cPc_n}  \1{\xi_r(y,\cP_n)\le x} \Big] \\
	&\qquad - \E\Big[ \1{\xi_r(0,\cP\cup\{0\})\le x} \Big] \Big| \ge \frac{\epsilon (\log n)^{1/2} }{ n^{1/2} } \Big ) = O(n^{-2}).
\end{align*}
In particular, this term is negligible as well.

The dominating term in \eqref{E:UnifConv6} corresponds to \eqref{E:UnifConv2}. An application of Proposition~\ref{P:ExpIneqPoisson} (ii) and the fact that $K\lesssim u_n^{-1}$ yield the existence of a $C_3\in\R_+$
\begin{align*}
	&\p\Big( \max_{k\in \{1,\ldots,K+2\} }	\Big | \wh H_{n,r}(\eta'_{k-1,n})  - \E\Big[\wh H_{n,r}(\eta'_{k-1,n}) \Big]	\Big| \\
	&\qquad\qquad\qquad \ge C_3 \Big( \frac{(\log n)^{1+d/\as}}{n} \Big)^{1/2} \Big) = O( u_n^{-1} \ n^{-3} ) = O(n^{-5/2}).
\end{align*}
 This shows (iii) and completes the proof.
\end{proof}

\begin{proof}[Proof of Theorem~\ref{T:BahadurPoissonUnif}]
We show in detail part (i) of the theorem; part (ii) works in a very similar fashion.

\textit{Part (i).} Let $I\subseteq\R$ be a bounded interval. Given the assumptions of Proposition~\ref{P:UniformConv} (ii) and (iii) there is a constant $\wt c\in\R_+$ such that for any positive and bounded sequence $(a_n)_n$, which satisfies $(\log n)^{2+d/\as} = o(a_n n)$, we have both
\begin{align*}
	&\limsup_{n\to\infty} \frac{n^{1/2}}{a_n^{1/2} (\log n)^{1+d/(2\as)}} \sup_{\substack{x,y\in I,\\ |x-y|\le a_n }} | (\wh F_n(x) - \wh F_n(y)) - (F(x)-F(y)) | \le \wt c, \\
	&\limsup_{n\to\infty} \frac{ n^{1/2}}{ (\log n)^{1/2 + d/(2\as)}} \sup_{x\in I} |\wh F_n(x) - F(x) | \le \wt c
\end{align*}
with probability 1.

In the following, we apply these results to the interval $\fP = (\psi_{p_0}-\epsilon, \psi_{p_1}+\epsilon )$, where by assumption $\epsilon>0$. The requirements of Proposition~\ref{P:UniformConv} are satisfied. Indeed, we have by the fundamental theorem of calculus $f(x) = f(z) + \int_z^x f'(s) \diff s$ for all $x,z\in \fP$. Consequently, as $\int_{\fP} f(s) \diff s \le 1$ and $\sup_{x\in \fP} f'(x)<\infty$, we have $\sup_{x\in \fP} f(x) < \infty$.

We choose $a_n = \ol c n^{-1/2} (\log n)^{1/2 + d/(2\as)}$ for $\ol c = (\wt c + 1)/\inf_{x\in \fP} f(x)$.
We can make this choice because the constant $\wt c$ does not depend on $\ol c$. Moreover, we can assume w.l.o.g. that $a_n \le \epsilon$ for all $n\in\N$.

 Clearly, we have with probability 1
\begin{align*}
		&\inf_{p_0\le p \le p_1} (\wh F_n( \psi_p + a_n ) - p) \\
		&\ge  \inf_{p_0\le p \le p_1} (F(\psi_p+a_n)-p) - \sup_{x\in \fP} |\wh F_n(x) - F(x) | \\
		&\quad - \sup_{\substack{x,y\in \fP,\\ |x-y|\le a_n }} | (\wh F_n(x) - \wh F_n(y)) - (F(x)-F(y)) |.
\end{align*}
Consequently, with probability 1
\begin{align*}
		&\liminf_{n\to\infty} \frac{ n^{1/2}}{ (\log n)^{1/2 + d/(2\as)}}  \inf_{p_0\le p \le p_1} (\wh F_n( \psi_p + a_n ) - p) \\
			&\ge \liminf_{n\to\infty} \frac{ n^{1/2}}{ (\log n)^{1/2 + d/(2\as)}} \Big\{ a_n \inf_{x\in\fP} f(x) + O( a_n^2 ) \Big\} -  \wt c \ge 1.
\end{align*}
Similarly, with probability 1
$$
	\limsup_{n\to\infty} \frac{ n^{1/2}}{ (\log n)^{1/2 + d/(2\as)}}  \sup_{p_0\le p \le p_1} (\wh F_n( \psi_p - a_n ) - p) \le -1.
$$
This shows, $\{ \sup_{p_0\le p\le p_1} |\wh \psi_{p,n} - \psi_p| > a_n \text{ i.o.}\}$ has probability 0. I.e., eventually $\sup_{p_0\le p\le p_1} |\wh \psi_{p,n} - \psi_p| \le a_n$ with probability 1.

Moreover, $|\wh F_n(\wh \psi_{p,n})-p|\le 1/n$. Thus, we can apply Proposition~\ref{P:UniformConv} (ii)
\begin{align*}
	& \sup_{p_0 \le p \le p_1} | (p-F(\wh \psi_{p,n})) - (\wh F_n(\psi_p)-F(\psi_p)) | \\
	&= \sup_{p_0 \le p \le p_1} | (\wh F_n(\wh \psi_{p,n})-F(\wh \psi_{p,n})) - (\wh F_n(\psi_p)-F(\psi_p)) | + O(n^{-1})\\
	&=O_{a.s.}( n^{-1/2} a_n^{1/2}  (\log n)^{1+d/(2\as)}) = O_{a.s}(n^{-3/4} (\log n)^{5/4+3d/(4\as) } ).
\end{align*}

Using a Taylor expansion of order 2 at $\psi_p$ together with the requirements $\inf_{x\in\fP} f(\psi_p) > 0$ as well as $\sup_{x\in\fP} |f'(x)| < \infty$ entails the claim.

\textit{Part (ii).} Given the assumptions in this setting, by Proposition~\ref{P:UniformConv} (ii') and (iii) there is a constant $\wt c\in\R_+$ such that for any positive and bounded sequence $(a_n)_n$, which satisfies $(\log n)^{1+d/\as} = o(a_n n)$, we have both
\begin{align*}
	&\limsup_{n\to\infty} \frac{n^{1/2}}{a_n^{1/2} (\log n)^{1/2+d/(2\as)}} \sup_{\substack{x,y\in I,\\ |x-y|\le a_n }} | (\wh F_n(x) - \wh F_n(y)) - (F(x)-F(y)) | \le \wt c, \\
	&\limsup_{n\to\infty} \frac{ n^{1/2}}{ (\log n)^{1/2 + d/(2\as)}} \sup_{x\in I} |\wh F_n(x) - F(x) | \le \wt c
\end{align*}
with probability 1. Now, we can repeat all calculations with the analog sequence $(a_n)_n$ to see with the help of Proposition~\ref{P:UniformConv} (ii') that
\begin{align*}
	& \sup_{p_0 \le p \le p_1} | (p-F(\wh \psi_{p,n})) - (\wh F_n(\psi_p)-F(\psi_p)) | =O_{a.s}(n^{-3/4} (\log n)^{3/4(1+d/\as) } ).
\end{align*}
This shows part (ii) and completes the proof.
\end{proof}

\section{Proofs of the results in Section~\ref{Section_Applications}}
\label{Section_Supplement}

\subsection{An abstract law of the iterated logarithm}
In this section we give some abstract results which lay the groundwork for functional CLT and the LIL in Section~\ref{Section_Applications}. For this purpose and given the processes $\cP_n$ and $\cPc_n$, we rely on the following score functionals. Let $m\in\N$, $x_1,\ldots,x_m\in\R$ and $a_1,\ldots,a_m\in\R$. We define for a finite point cloud $P$ and $y\in P$ the scores
\begin{align}\label{E:FidiScores}
	\Psi(y,P) = \sum_{i=1}^m a_i (\1{\xi(y,P)\le x_i } - F(x_i) ),
\end{align}
where $F(x) = \p( \xi(0,\cP\cup\{0\}) \le x)$ is the distribution function of the score functional if a homogeneous Poisson process with unit intensity on $\R^d$ is used.

Using the Poisson processes from \eqref{D:PP1} to \eqref{D:PP3}, we consider the differences
\begin{align}
	\Delta(z,n) &= \sum_{y\in\cPc_n} \Psi(y,\cP_n) - \sum_{y\in\cPc_{n,z} } \Psi(y,\cP''_{n,z}) 	\label{E:MDS1} \\
	\begin{split}\label{E:MDS2}
	\Delta(z,\infty) &=  \sum_{y\in\cP \setminus Q_z} \sum_{i=1}^m a_i (\1{\xi(y,\cP)\le x_i } - \1{\xi(y,\cP''_z)\le x_i} )  \\
	&\quad  + \sum_{y\in\cP \cap Q_z } \Psi(y,\cP) - \sum_{y\in \cP''_z \cap Q_z  } \Psi(y,\cP''_{z})
	\end{split}
\end{align}
First, note that
\[
	( \E[ \Delta(z,n) | \cF_z ] : z\in B_n ) \text{ and } ( \E[ \Delta(z,\infty) | \cF_z ] : z\in B_n ) 
\]
are both martingale differences w.r.t.\ the Poisson filtration $(\cF_z: z\in\Z^d)$, which is based on the lexicographic ordering of $\Z^d$. In particular,
\[
	\sum_{y\in \cPc_n} \Psi(y,\cP_n) - \E[\sum_{y\in \cPc_n} \Psi(y,\cP_n)] = \sum_{z\in B_n} \E[ \Delta(z,n) | \cF_z].
\]

Note that the first sum in \eqref{E:MDS2} is well-defined because of the next Lemma~\ref{L:DeltaInfty}.
\begin{lemma}\label{L:DeltaInfty}
Let $z\in \R^d$. 
Then there are constants $c_1,c_2\in\R_+$ such that
\begin{align}
	\p\Big( \exists y \in \cP \cap B(z,k)^c : R(y,\cP) \ge \|y-z\| - \frac{\sqrt{d}}{2} \Big) \le c_1 \exp( - c_2 k^{\as} ),\label{E:DeltaInfty1}
\end{align}
for each $k\in\R_+$. Hence, $\p( \#\{y \in \cP: R(y,\cP) \ge \|y-z\| - \sqrt{d}/2 \}<\infty ) = 1$.

Moreover,
\begin{align}
	\p(\exists N\in\N |\forall n\ge N, \forall y\in \cPc_n: R(y,\cP) \le r ) = 1.	\label{E:DeltaInfty2}
\end{align}

In particular, $\lim_{n\to\infty} \Delta(z,n) = \Delta(z,\infty)$ $a.s.$ and the definition of $\Delta(z,\infty)$ is meaningful for each $z\in\R^d$. Moreover, $\fD_\infty(x)$ from \eqref{E:AddOneCost} is a proper random variable for each $x\in\R$.
\end{lemma}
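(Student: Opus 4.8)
The plan is to prove the two displayed probability bounds first and then harvest the remaining conclusions as corollaries. For \eqref{E:DeltaInfty1}, I would decompose the region $B(z,k)^c$ into dyadic annuli $A_\ell = B(z,2^{\ell+1}k)\setminus B(z,2^\ell k)$, $\ell\ge 0$. Inside each annulus I estimate
\[
	\p\Big( \exists y\in\cP\cap A_\ell : R(y,\cP)\ge \|y-z\|-\tfrac{\sqrt d}{2} \Big)
	\le \E\Big[ \sum_{y\in\cP\cap A_\ell} \1{R(y,\cP)\ge \|y-z\|-\sqrt d/2} \Big],
\]
and then apply the Mecke formula together with the exponential stabilization bound \eqref{E:Stabilization2} (using translation invariance of $R$): the expectation becomes $\int_{A_\ell} \p(R(0,\cP\cup\{0\})\ge \|y-z\|-\sqrt d/2)\,\diff y \le |A_\ell|\, C_{stab}\exp(-c_{stab}(2^\ell k-\sqrt d/2)^{\as})$. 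Since $|A_\ell|$ grows only polynomially in $2^\ell k$ while the exponential decays in $(2^\ell k)^{\as}$, summing over $\ell\ge 0$ gives a bound of the form $c_1\exp(-c_2 k^{\as})$ for suitable constants, provided $k$ is bounded below (small $k$ is handled trivially by enlarging $c_1$). Letting $k\to\infty$ along integers and using Borel--Cantelli (the bounds are summable) yields that $\{y\in\cP: R(y,\cP)\ge\|y-z\|-\sqrt d/2\}$ is a.s.\ finite.

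For \eqref{E:DeltaInfty2} the idea is that a point $y\in\cPc_n$ lies in $\Wc_n$, hence at distance at least $r=r_n=(c^*\log n)^{1/\as}$ from $\partial W_n$, so $\cP_n\cap B(y,r)=\cP\cap B(y,r)$ and the strong stabilization identity \eqref{E:Stabilization1} makes $R(y,\cP)\le r$ equivalent to $R(y,\cP_n)\le r$ in the relevant range. I would bound
\[
	\p\big( \exists n\ge N,\ \exists y\in\cPc_n : R(y,\cP)> r_n \big)
	\le \sum_{n\ge N} \p\big( \exists y\in\cP\cap W_n : R(y,\cP)> (c^*\log n)^{1/\as} \big),
\]
and by Mecke plus \eqref{E:Stabilization2} each summand is at most $|W_n|\,C_{stab}\exp(-c_{stab}c^*\log n) = C_{stab}\, n\cdot n^{-c_{stab}c^*} \le C_{stab}\, n^{1-4}=C_{stab}n^{-3}$, using $c^*\ge 4/c_{stab}$. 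This is summable, so the probability tends to $0$ as $N\to\infty$, i.e.\ the event in \eqref{E:DeltaInfty2} has probability $1$.

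Finally I would assemble the consequences. From \eqref{E:DeltaInfty1}, a.s.\ only finitely many $y\in\cP$ have $R(y,\cP)\ge\|y-z\|-\sqrt d/2$; for every other $y$ the stabilization ball $B(y,R(y,\cP))$ does not contain $z$, so resampling $\cP$ inside $Q_z$ (i.e.\ replacing $\cP$ by $\cP''_z$) leaves $\xi(y,\cP)$ unchanged, hence the summand $\1{\xi(y,\cP)\le x_i}-\1{\xi(y,\cP''_z)\le x_i}$ vanishes. Therefore the first sum in \eqref{E:MDS2} (and likewise $\fD_\infty(x)$ in \eqref{E:AddOneCost}, which is the same mechanism with $\cP$ and $\cP\cup\{0\}$) is a.s.\ a finite sum and defines a proper random variable. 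For $\lim_{n\to\infty}\Delta(z,n)=\Delta(z,\infty)$ a.s., I combine \eqref{E:DeltaInfty2} (so that for large $n$ all truncations at radius $r_n$ are exact for points of $\cPc_n$, and $\cPc_n\uparrow\cP$) with the tail control of \eqref{E:DeltaInfty1} applied also to $\cP''_z$ (whose law differs from that of $\cP$ only on the bounded set $Q_z$, so the same estimate holds up to constants): the differences $\Psi(y,\cdot)$ for $y$ outside a fixed large ball eventually contribute $0$ to both $\Delta(z,n)$ and $\Delta(z,\infty)$, and on the fixed finite set of remaining points $\cP_n$ and $\cP''_{n,z}$ agree with $\cP$ and $\cP''_z$ once $n$ is large. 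The main obstacle is bookkeeping: one must carefully justify that the exceptional "non-stabilizing'' points are the \emph{same} finite set for all large $n$ and for the infinite-volume configuration simultaneously, which is exactly what \eqref{E:DeltaInfty1} and \eqref{E:DeltaInfty2} are designed to deliver, so the argument reduces to pasting these two a.s.\ events together.
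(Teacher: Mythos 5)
Your proposal is correct and follows essentially the same route as the paper's proof: a first-moment bound via the Mecke formula combined with the exponential stabilization estimate \eqref{E:Stabilization2} for both \eqref{E:DeltaInfty1} and \eqref{E:DeltaInfty2} (the paper uses unit-width shells $S_j(z)$ where you use dyadic annuli, which is immaterial), Borel--Cantelli for the almost-sure statements, and the same pasting of the two probability-one events to make the tail of $\Delta(z,\infty)$ a finite sum and to identify the limit of $\Delta(z,n)$.
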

\begin{proof}
We begin with \eqref{E:DeltaInfty1}. Set $S_j(z) = B(z,j) \setminus B(z,j-1)$. Then we have for $A_k = \{ \exists y \in \cP \cap B(z,k)^c : R(y,\cP) \ge \|y-z\| - \sqrt{d}/2 \}$
\begin{align}
	\p( A_k  ) &= \sum_{j=1}^\infty \p( \exists y \in \cP \cap B(z,k)^c \cap S_j(z) : R(y,\cP) \ge \|y-z\| - \sqrt{d}/2 ) \nonumber \\
	&\le \sum_{j=1}^\infty \E\Big[ \sum_{ y \in \cP \cap B(z,k)^c \cap S_j(z) } \1{ R(y,\cP) \ge \|y-z\| - \sqrt{d}/2 } \Big] \nonumber \\
	& \le \sum_{j=1}^\infty | B(z,k)^c \cap S_j(z) | \ \p( R(0,\cP\cup \{0\}) \ge (j-1)-\sqrt{d}/2 ) \nonumber \\
	&\lesssim \sum_{j=k+1}^\infty  j^{d-1} \ \exp( - c_{stab} j^{\as} ) \nonumber \\
	&= \sum_{j=1}^\infty (j+k)^{d-1} \ \exp( - c_{stab} (j+k)^{\as} )  \nonumber \\
	&\lesssim \exp( - a k^{\as} ) \label{E:DeltaInfty5},
\end{align}
for some $a\in\R_+$ and
where we use $j^{\as} + k^{\as} \le 2 (j\vee k)^{\as} \le 2 (j+k)^{\as}$  for the last inequality. We infer from \eqref{E:DeltaInfty5} that $\p( A_k \text{ occurs i.o.}) = 0$.

Regarding \eqref{E:DeltaInfty2}, we see
\begin{align*}
	&\sum_{n=1}^\infty \p( \exists y \in \cPc_n : R(y,\cP) \ge r ) \le \sum_{n=1}^\infty \E\Big[ \sum_{y\in \cPc_n} \1{R(y,\cP) \ge r } \Big] \\
	&\lesssim \sum_{n=1}^\infty n \ \p( R(0,\cP\cup\{0\} ) \ge r ) < \infty.
\end{align*}
Hence, $\p( \{\exists y \in \cPc_n : R(y,\cP) \ge r\} \text{ occurs i.o.} ) = 0$. This shows \eqref{E:DeltaInfty2}.

We conclude with the amendment of the lemma regarding the random variable $\Delta(z,\infty)$.
Given $z\in\R^d$, there is an $n_0\in\N$ such that $Q_z$ is contained in $\Wc_n$ for all $n\ge n_0$. Then for $n\ge n_0$, we decompose $\Delta(z,n)$ as follows.
\begin{align}
	\Delta(z,n) &= \sum_{y\in \cPc_n \setminus Q_z} \sum_{i=1}^m a_i( \1{ \xi(y,\cP_n)\le x_i} - \1{ \xi(y,\cP''_{n,z})\le x_i} ) \label{E:DeltaInfty3} \\
	&\quad + \sum_{y\in \cP \cap Q_z} \Psi(y,\cP_n) - \sum_{y\in \cP''_{z} \cap Q_z} \Psi(y,\cP''_{n,z} ). \label{E:DeltaInfty4}
\end{align}
The terms in \eqref{E:DeltaInfty4} converge to $\sum_{y\in \cP \cap Q_z} \Psi(y,\cP) - \sum_{y\in \cP''_z \cap Q_z} \Psi(y,\cP''_{z})$ $a.s.$ 

It remains to show that there is an $n_1\in\N$, $n_1\ge n_0$, which depends on $\omega\in\Omega$, such that the sum in \eqref{E:DeltaInfty3} is constant for all $n\ge n_1$. This follows however from \eqref{E:DeltaInfty1} and \eqref{E:DeltaInfty2}. To see this, note that the distance between a point $y$ outside $Q_z$ and the boundary $\partial Q_z$ is at least $\|y-z\| - \sqrt{d}/2$. Moreover, we have
\begin{align*}
	&\{ \exists k> 0, \exists N\in \N | \forall n \ge N, \forall y\in (\cPc_n \setminus Q_z) \cap B(z,k)^c: \\
	&\qquad\qquad\qquad\qquad\qquad\qquad R(y,\cP) \le r \wedge \|y-z\| - \sqrt{d}/2 \} \\
	&\supseteq \{ \exists k> 0, | \forall y\in \cP \cap B(z,k)^c: R(y,\cP) \le \|y-z\| - \sqrt{d}/2 \}  \\
	&\qquad \cap \{ \exists N\in \N | \forall n \ge N, \forall y\in \cPc_n \setminus Q_z: R(y,\cP) \le r \} .
\end{align*}
Both events on the right-hand side have probability 1. This shows the amendment.
\end{proof}

The next proposition contains an important approximation result.
\begin{proposition}\label{P:Approximation}
Let $p>2d$ let $r=(c^*\log n)^{1/\as}$, where $c^* \ge 4/c_{stab}$. 
Either assume the additional stabilization condition from \eqref{E:Stabilization3} or let additionally $c^* > (p+1+1/d)/c_{stab}$. Then we have for the martingale differences induced by \eqref{E:MDS1} and \eqref{E:MDS2}
\begin{align}
	&\sup_{n\in \N} \sup_{z\in B_n} \E[ |\Delta(z,n)|^p] + \E[ |\Delta(z,\infty)|^p ] < \infty, \label{E:ApproximationMain0}\\
	&\sum_{z\in B_n}  \E[ \Delta(z,n) - \Delta(z,\infty)  | \cF_z] = o_{a.s.} ( r^{1/2} (\log n)^{1+\delta} n^{ (d-1)/(2d) + 1/p} ) \label{E:ApproximationMain1}
\end{align}
for each $\delta>0$.
In particular, if $p>2d$, the right-hand side is $o_{a.s}(n^{1/2})$.
\end{proposition}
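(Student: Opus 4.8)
The plan is to establish the two assertions separately; the uniform moment bound \eqref{E:ApproximationMain0} is standard for stabilizing functionals, while the approximation \eqref{E:ApproximationMain1} is the substantial part, and its rate will come from a martingale decomposition of the sum together with a count of the ``boundary'' cells of $B_n$. For \eqref{E:ApproximationMain0}: by strong stabilization \eqref{E:Stabilization1}, a point $y$ of $\cP_n$ can change $\Delta(z,n)$ only if its stabilization ball reaches $Q_z$, whence
\[
  |\Delta(z,n)|\ \lesssim\ \Big(\sum_{i=1}^m|a_i|\Big)\,N_z,\qquad
  N_z:=\#\{y\in\cP_n\cup\cP'_n:\ R(y,\cdot)\ge\operatorname{dist}(y,Q_z)\}+(\cP_n\cup\cP'_n)(Q_z).
\]
Using the uniform stabilization \eqref{E:Stabilization3} --- or, working only with \eqref{E:Stabilization2}, after absorbing the window boundary into a sufficiently large $c^*$ --- the expected number of influencing points at distance $s$ from $Q_z$ is of order $s^{d-1}e^{-c_{stab}s^{\as}}$, which is summable, and Poisson concentration then shows $N_z$ has finite exponential moments uniformly in $n$ and $z\in B_n$. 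Hence $\sup_{n,z}\E[|\Delta(z,n)|^p]<\infty$ for every $p$, and the bound for $\E[|\Delta(z,\infty)|^p]$ follows the same way, now invoking \eqref{E:DeltaInfty1} of Lemma~\ref{L:DeltaInfty} in place of the window estimate.

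For \eqref{E:ApproximationMain1}, set $D(z,n):=\Delta(z,n)-\Delta(z,\infty)$. The key localisation is that, on the almost sure event of \eqref{E:DeltaInfty2} and for cells $z$ at distance $\gtrsim r$ from $\partial\Wc_n$, every point relevant to $\Delta(z,n)$ or to $\Delta(z,\infty)$ lies in $\Wc_n$, has stabilization radius $\le r$, is therefore unaffected by the hard truncation defining $\cP_n,\cP'_n$, and moreover $Q_z\subseteq\Wc_n$; thus $D(z,n)=0$ for such $z$ on that event, and the same exponential estimates give $\p(D(z,n)\neq0)\lesssim\exp(-c_{stab}\operatorname{dist}(z,\partial\Wc_n)^{\as})$. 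Writing $\partial_r B_n=\{z\in B_n:\operatorname{dist}(z,\partial\Wc_n)\lesssim r\}$, so that $\#\partial_r B_n\lesssim r\,n^{(d-1)/d}$, the sum over interior cells is negligible by a crude estimate: since $(\E[D(z,n)\mid\cF_z])_z$ is a martingale difference sequence for the Poisson filtration (being the difference of the two martingale differences displayed before the statement), H\"older's inequality with \eqref{E:ApproximationMain0} and the tail bound give
\[
  \E\Big|\sum_{z\in B_n\setminus\partial_r B_n}\E[D(z,n)\mid\cF_z]\Big|\ \le\ \sum_{z\in B_n\setminus\partial_r B_n}\E[|D(z,n)|^p]^{1/p}\,\p(D(z,n)\neq0)^{1-1/p}\ \lesssim\ n\,e^{-c\,r^{\as}}=n^{\,1-c\,c^*},
\]
which is summable in $n$ once $c^*$ is as large as assumed, so Markov and Borel--Cantelli make the interior sum $o_{a.s.}$ of any polynomial rate.

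For the boundary sum the crude bound $\#\partial_r B_n\asymp r\,n^{(d-1)/d}$ is too weak (indeed $rn^{(d-1)/d}\gg t_n$ below when $p>2d$), so one must exploit the martingale structure: Burkholder--Rosenthal together with \eqref{E:ApproximationMain0} yields $\E|\sum_{z\in\partial_r B_n}\E[D(z,n)\mid\cF_z]|^p\lesssim_p(\#\partial_r B_n)^{p/2}\lesssim(r\,n^{(d-1)/d})^{p/2}$, and Markov's inequality at the level $t_n=r^{1/2}(\log n)^{1+\delta}n^{(d-1)/(2d)+1/p}$ gives $\p(\,\cdot\,>t_n)\lesssim(\log n)^{-p(1+\delta)}n^{-1}$, which is summable because $p(1+\delta)>1$. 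By Borel--Cantelli the boundary sum is $O_{a.s.}(t_n)$, and since $\delta>0$ was arbitrary it is $o_{a.s.}(r^{1/2}(\log n)^{1+\delta}n^{(d-1)/(2d)+1/p})$ for every fixed $\delta$. Adding the interior and boundary contributions gives \eqref{E:ApproximationMain1}; and when $p>2d$ one has $(d-1)/(2d)+1/p<1/2$, so the $n$-power beats $n^{1/2}$ while the logarithmic and $r^{1/2}$ factors are harmless, yielding the final claim.

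The main obstacle is the localisation step: one has to argue carefully, combining the hard truncation defining $\cP_n$ and $\cP'_n$ with the a.s.\ event \eqref{E:DeltaInfty2}, that for interior cells the finite-window difference $\Delta(z,n)$ genuinely coincides with the limiting $\Delta(z,\infty)$ built from the full processes. This is exactly where the extra hypothesis \eqref{E:Stabilization3} (or the enlarged $c^*$) enters, and it is what pins down the count $\#\partial_r B_n\asymp r\,n^{(d-1)/d}$ of truly boundary cells and hence the exponent in the stated rate; everything else is routine, given the uniform moment bound and the martingale structure.
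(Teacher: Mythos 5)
Your overall architecture matches the paper's: the quantities $\E[\Delta(z,n)-\Delta(z,\infty)\mid\cF_z]$ form a martingale difference array over the lexicographically ordered cells, Burkholder reduces the sum to per-cell moments, the $O(1)$ moment bound is only charged to the $\asymp r\,n^{(d-1)/d}$ cells near $\partial\Wc_n$, and that count is what produces the exponent $(d-1)/(2d)$. Your boundary step is in fact slightly cleaner than the paper's: applying Markov at the level $t_n=r^{1/2}(\log n)^{1+\delta}n^{(d-1)/(2d)+1/p}$ and summing $(\log n)^{-p(1+\delta)}n^{-1}$ dispenses with the maximal inequality and the dyadic blocking of Lemma~\ref{L:LILTool}, while accounting for the same $n^{1/p}(\log n)^{1+\delta}$ correction.

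The gap is in the interior-cell localisation, which you yourself single out as the crux. The claim $\p(\Delta(z,n)\neq\Delta(z,\infty))\lesssim\exp(-c_{stab}\operatorname{dist}(z,\partial\Wc_n)^{\as})$ fails for deep cells: a point $y\in\cPc_n$ lying near $\partial\Wc_n$ but far from $z$ can simultaneously feel the window truncation ($\xi(y,\cP_n)\neq\xi(y,\cP)$, requiring only $R(y,\cP)\ge\operatorname{dist}(y,\partial W_n)\ge r$) and reach $Q_z$; the radius needed is $\min(\|y-z\|,\operatorname{dist}(y,\partial W_n))$, which is of order $r$ no matter how deep $z$ sits. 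What one actually gets is a union bound over all $\sim n$ points of $\cPc_n$, of size $\approx n\,\p(R\ge r)\approx n^{1-c_{stab}c^*}$, \emph{uniform} over interior cells rather than decaying with depth --- this is precisely the term \eqref{E:Approximation5} in the paper's proof, and it is the reason the hypothesis offers the dichotomy between \eqref{E:Stabilization3} (which controls $R(y,\cP_n)$ shell by shell) and the enlarged $c^*$ (which makes the global union bound survive a $p$-th power). Your first-moment-plus-Borel--Cantelli treatment of the interior sum then becomes fragile: the depth-decaying part of $\p(\Delta(z,n)\neq\Delta(z,\infty))$ enters only through the H\"older power $1-1/p$ and the constant $a$ that the shell estimate \eqref{E:DeltaInfty1} actually delivers (about $c_{stab}/2$), and with the minimal $c^*=4/c_{stab}$ the resulting series $\sum_n n^{(d-1)/d}\,n^{-a(1-1/p)c^*}$ need not converge. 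The paper sidesteps this by proving only the per-cell $L^p$ bound $\E[|\Delta(z,n)-\Delta(z,\infty)|^p]\lesssim n^{-1/d}$ for $Q_z\subseteq\Wc_n$ and keeping the interior cells inside the single Burkholder sum, where $n\cdot n^{-1/d}\lesssim r\,n^{(d-1)/d}$ is all that is needed. A smaller point: in your argument for \eqref{E:ApproximationMain0}, ``Poisson concentration shows $N_z$ has finite exponential moments'' is too quick, since $N_z$ is not Poisson and the events $\{R(y,\cdot)\ge\operatorname{dist}(y,Q_z)\}$ are dependent; the finite $p$-th moments do hold, but via the shell decomposition, H\"older and the Mecke formula as in the display culminating in \eqref{E:Approximation9}.
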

\begin{proof}
The calculations are split in four steps.

\textit{Step 1.}
Set $X_{n,z} =  \E[ \Delta(z,n) - \Delta(z,\infty)  | \cF_z]$ for $z\in B_n$ and $n\in\N$. Using the Burkholder and the Minkowski inequality, there is a $c_{1,p}\in\R_+$ such that
\begin{align}
	\Big\| \sum_{z\in B_n} X_{n,z} \Big \|_p \le  c_{1,p} \Big\| \Big(\sum_{z\in B_n} X_{n,z}^2 \Big)^{1/2} \Big \|_p   \le c_{1,p} \Big( \sum_{z\in B_n} \| X_{n,z}^2 \|_{p/2} \Big)^{1/2}.\label{E:Approximation0}
\end{align}
In order to give an upper bound on \eqref{E:Approximation0}, we derive suitable upper bounds on the $p$-norm of $\Delta(z,n)-\Delta(z,\infty)$ depending of the relative position of $z$ within the windows $W_n$. 

Throughout the rest of the proof, let $\wt p, \wt q$ be H{\" o}lder conjugate and not depending on $n$. If the extended stabilization condition from \eqref{E:Stabilization3} is in force, then set $\wt p = \wt q = 2$. Otherwise, choose $\wt q$ close enough to 1 such that
\[
	c^* \ge \frac{ \wt q (p + 1/d)+1}{c_{stab}}
\]
holds (this is possible because of the additional requirement on $c^*$ in this case). In particular, we have in this case
\begin{align}\label{E:Approximation3a}
	n^{p + 1/ \wt q} \ \p( R(0,\cP\cup\{0\}) \ge r )^{1/ \wt q} \lesssim n^{- (c_{stab} c^* / \wt q - p - 1/ \wt q ) } \le n^{-1/d}
\end{align}
what we will use below in Step 2 and 3.

We show in Step 2 that $\E[ | \Delta(z,n) - \Delta(z,\infty) |^p ] \le c_{3} n^{-1/d}$ if $Q_z \subseteq \Wc_n$ for a constant $c_3\in\R_+$ which does neither depend on $z$ nor on $n$.

We show in Step 3 that $\E[ | \Delta(z,n) - \Delta(z,\infty) |^p ] \le c_4$ for all $z\in B_n$ for a constant $c_4\in\R_+$ which does neither depend on $z$ nor on $n$.

\textit{Step 2.}
If $Q_z \subseteq \Wc_n$, then we use \eqref{E:MDS2} as well as \eqref{E:DeltaInfty3} and \eqref{E:DeltaInfty4} to obtain
\begin{align}
	&\E[ | \Delta(z,n) - \Delta(z,\infty) |^p ] \nonumber \\
	&\lesssim \E\Big[ \Big| \sum_{y\in \cPc_n \cap Q_z } \Psi(y,\cP_n) -  \sum_{y\in \cP \cap Q_z } \Psi(y,\cP)  \Big|^p \Big] \label{E:Approximation2} \\
	\begin{split}\label{E:Approximation3}
	&\qquad + \E\Big[ \Big|  \sum_{y\in \cPc_n \setminus Q_z} \sum_{i=1}^m a_i( \1{ \xi(y,\cP_n)\le x_i} - \1{ \xi(y,\cP''_{n,z})\le x_i} ) \\
	&\qquad\qquad - \sum_{y\in\cP \setminus Q_z} \sum_{i=1}^m a_i (\1{\xi(y,\cP)\le x_i } - \1{\xi(y,\cP''_z)\le x_i} ) \Big|^p \Big].
	\end{split}
\end{align}
Since $Q_z\subseteq \Wc_n$, the moment in \eqref{E:Approximation2} is
\begin{align}
	&\E\Big[	\Big| \sum_{y\in \cP\cap Q_z} \Psi(y,\cP_n) - \Psi(y,\cP) \Big|^p \Big] \nonumber \\
	 &\lesssim \E\Big[ \Big| \sum_{y\in \cP\cap Q_z} \1{ \xi(y,\cP_n) \neq \xi(y,\cP)} \Big|^p \Big]\nonumber \\
	 &\le \| \cP( Q_z )^p \|_{\wt p} \ \p( \exists y \in  \cP\cap Q_z : R(y,\cP)\ge r )^{1/\wt q} \nonumber \\
	 &\lesssim \p( R(0,\cP\cup\{0\} ) \ge r)^{1/\wt q} \lesssim \exp( - c_{stab} r^{\as} / \wt q ). \nonumber
\end{align}

We continue with the expectation in \eqref{E:Approximation3}.
Since $p$ and $m$ are fixed, it is sufficient to consider the following moment in order to show that \eqref{E:Approximation3} is small. Consider for $x\in\R$
\begin{align}
	&\E\Big[ \Big|  \sum_{y\in \cPc_n \setminus Q_z} ( \1{ \xi(y,\cP_n)\le x} - \1{ \xi(y,\cP''_{n,z})\le x} ) \nonumber \\
	&\qquad\qquad - \sum_{y\in\cP \setminus Q_z} (\1{\xi(y,\cP)\le x } - \1{\xi(y,\cP''_z)\le x} ) \Big|^p \Big] \nonumber\\
	&\lesssim \E\Big[ \Big|  \sum_{ \substack{ y\in \cPc_n \setminus Q_z, \\ \|y-z\| \le r  } }  \ \1{ \xi(y,\cP_n)\le x}  - \sum_{\substack{y\in\cP \setminus Q_z,\\ \|y-z\| \le r } } \1{\xi(y,\cP)\le x } \Big|^p \Big] \label{E:Approximation4} \\
	&\quad + \E\Big[ \Big|  \sum_{ \substack{ y\in \cPc_n \setminus Q_z, \\ \|y-z\| \ge r } }  \ \1{ \xi(y,\cP_n)\le x}  - \1{\xi(y,\cP''_{n,z})\le x } \Big|^p \Big] \label{E:Approximation5} \\
	&\quad + \E\Big[ \Big|  \sum_{ \substack{ y\in \cP \setminus Q_z, \\ \|y-z\| \ge r } }  \ \1{ \xi(y,\cP)\le x}  - \1{\xi(y,\cP''_{z})\le x } \Big|^p \Big]. \label{E:Approximation6}
\end{align}
We have to treat each term in \eqref{E:Approximation4} to \eqref{E:Approximation6} separately.

The sum in \eqref{E:Approximation4}, which treats the local scores, is at most
\begin{align}
	&\E\Big[ \Big|  \sum_{ \substack{ y\in \cP \setminus Q_z, \\ \|y-z\| \le r } }  \ \1{ \xi(y,\cP_n)\le x}  -  \1{\xi(y,\cP)\le x } \Big|^p \Big] \nonumber \\
	&\le \E\Big[ \cP( B(z, r))^{p}  \1{ \exists y\in \cP \setminus Q_z, \|y-z\| \le r :  \xi(y,\cP_n) \neq \xi(y,\cP)} \Big]  \nonumber \\
	&\le \| \cP( B(z, r))^{p} \|_{\wt p} \ \E\Big[ \sum_{ \substack{ y\in \cP \setminus Q_z, \\ \|y-z\| \le r } } \1{ R(y,\cP) \ge r } \Big] ^{1/\wt q} \nonumber \\
	&\lesssim r^{dp + d/\wt q} \ \p( R(0,\cP\cup \{0\} ) \ge r )^{ 1/\wt q}. \nonumber
\end{align}

We continue with the second non-local sum in \eqref{E:Approximation6} which is at most
\begin{align}
		&\E\Big[ \Big|  \sum_{ \substack{ y\in \cP \setminus Q_z, \\ \|y-z\| \ge r } }  \ \1{ \xi(y,\cP) \neq \xi(y,\cP''_{z}) } \Big|^p \Big] \nonumber \\
	&\le \E\Big[ \Big|  \sum_{ \substack{ y\in \cP \setminus Q_z, \\ \|y-z\| \ge r } }  \1{R(y,\cP) \ge \|y-z\| - \sqrt{d}/2 } 	\Big|^p \Big] \nonumber \\
	&= \sum_{j_1,\ldots,j_p=1}^\infty \E \Bigg[ \prod_{i=1}^p \sum_{y\in S_{j_i+r }(z) \cap \cP }	\1{R(y,\cP) \ge \|y-z\| - \sqrt{d}/2 }  \Bigg], \label{E:Approximation9}
\end{align}
where $S_{j}(z) = B(z,j)\setminus B(z,j-1)$. It follows with elementary calculations that \eqref{E:Approximation9} is at most
\begin{align*}
	&\sum_{j_1,\ldots,j_p=1}^\infty \prod_{i=1}^p \| \cP( S_{j_i+r}(z) ) \|_{p \wt p}  \, \E\Big[ \sum_{y\in \cP\cap S_{j_i+r }(z)}	\1{R(y,\cP) \ge \|y-z\| - \sqrt{d}/2 }  \Big]^{1/ (p \wt q)} \\
	&\lesssim \Bigg( \sum_{j=1}^\infty |S_{j+r}(z) |^{1+1/(p \wt q)}  \, \ \p( R(0,\cP\cup \{0\}) \ge j+r - 1 - \sqrt{d}/2 )^{1/ (p \wt q)} \Bigg)^p \\
	&\lesssim \Bigg( \sum_{j=1}^\infty (j+r)^{(d-1) (1 + 1/(p \wt q) )} \ \p( R(0,\cP\cup \{0\}) \ge j+r - 1 - \sqrt{d}/2 )^{1/(p\wt q)} \Bigg)^p \\
	&\lesssim \exp( - a r^{\as}  ),
\end{align*}
for some $a\in\R_+$.

We conclude the second step with the considerations for the first non-local sum in \eqref{E:Approximation5}. Given the condition that additionally $c^* \ge [\wt q (p+1/d) +1]/c_{stab}$ this sum is at most
\begin{align}
	&\E\Big[ \Big|  \sum_{ \substack{ y\in \cPc_n \setminus Q_z, \\ \|y-z\| \ge r } }  \1{ \xi(y,\cP_n) \neq \xi(y,\cP''_{n,z} } 	\Big|^p \Big] \label{E:Approximation8a} \\
	&\le \E\Big[ \cP_n( B(z,r))^p \1{\exists y \in \cPc_n : R(y,\cP) \ge r-\sqrt{d}/2 } \Big] \nonumber \\
	&\lesssim n^{p+1/\wt q} \ \p( R(0,\cP\cup\{0\}) \ge r - \sqrt{d}/2)^{1/\wt q} \nonumber \\
	&\lesssim n^{-1/d},  \nonumber
\end{align}
by the condition in \eqref{E:Approximation3a}.

Otherwise, given the extended stabilizing condition from \eqref{E:Stabilization3} is in force, we can carry out similar calculations as for the second non-local sum in \eqref{E:Approximation6} to see that \eqref{E:Approximation8a} is of order $\exp( - a r^{\as} )$ for some $a\in\R_+$.

\textit{Step 3.}
We rely on the decomposition
\begin{align}
	&\E[ | \Delta(z,n) - \Delta(z,\infty) |^p ] \nonumber \\
	&\lesssim \E[ | \Delta(z,n) |^p ] + \E[ | \Delta(z,\infty) |^p ]  \nonumber \\
	&\lesssim  \E\Big[ \Big|\sum_{y\in \cPc_n \cap Q_z} \Psi(y,\cP_n) \Big| ^p \Big] + \E\Big[ \Big|\sum_{y\in \cP\cap Q_z} \Psi(y,\cP) \Big| ^p \Big]\label{E:Approximation1a} \\
	&\quad + \E\Big[ \Big| \sum_{y\in \cPc_n \setminus Q_z} \1{ \xi(y,\cP_n) \neq \xi(y,\cP''_{n,z})} \Big|^p	\Big] \label{E:Approximation1b}\\
	&\quad + \E\Big[ \Big| \sum_{y\in \cP \setminus Q_z} \1{ \xi(y,\cP) \neq \xi(y,\cP''_{z})} \Big|^p	\Big]. \label{E:Approximation1c}
\end{align}
Each term on the left-hand side of \eqref{E:Approximation1a} is of order $\E[ \cP(Q_0)^p ]<\infty$. 

We continue with the term in \eqref{E:Approximation1c}, which is at most
\begin{align*}
		&\sum_{j_1,\ldots,j_p=1}^\infty \E\Big[ \prod_{i=1}^p	\sum_{y\in (\cP \setminus Q_z) \cap S_{j_i}(z) } \1{ \xi(y,\cP) \neq \xi(y,\cP''_{z}}		\Big]  \\
		&\lesssim \sum_{j_1,\ldots,j_p=1}^\infty \prod_{i=1}^p |S_{j_i}(z)|^{1+1/(p\wt q)} \ \p( R(0,\cP\cup \{0\})\ge j_i-1-\sqrt{d}/2 )^{1/(p\wt q)}.
\end{align*}
And the last right-hand side is uniformly bounded above.

Finally, we come to the term in \eqref{E:Approximation1b}. If the extended stabilization condition from \eqref{E:Stabilization3} is satisfied, one sees that with similar considerations as for the term in \eqref{E:Approximation1c} that \eqref{E:Approximation1b} is uniformly bounded above.

However, if the additional requirement $c^* > (p+1+1/d)/c_{stab}$ is satisfied, we rely on the following upper bound for the term in \eqref{E:Approximation1b}, which is then at most
\begin{align}
	&\sum_{j_1,\ldots,j_p=1}^\infty \E\Big[ \prod_{i=1}^p	\sum_{y\in (\cPc_n \setminus Q_z) \cap S_{j_i}(z) } \1{ \xi(y,\cP_n) \neq \xi(y,\cP''_{n,z}} \Big] \nonumber \\
	&\lesssim\sum_{j_1,\ldots,j_p=1}^\infty  \prod_{i=1}^p |S_{j_i}(z) \cap \Wc_n \setminus Q_z |^{1+1/(p\wt q)} \nonumber \\
	&\qquad\qquad \p( \xi(Y_{j_i},\cP_n \cup \{Y_{j_i}\}) \neq \xi(Y_{j_i},\cP''_{n,z} \cup \{Y_{j_i}\}	)^{1/(p \wt q)}, \nonumber \\
	\begin{split}\label{E:Approximation1d}
	&= \Bigg( \sum_{j=1}^\infty |S_{j}(z) \cap \Wc_n \setminus Q_z |^{1+1/(p\wt q)} \\
	&\qquad\qquad \p( \xi(Y_{j},\cP_n \cup \{Y_{j}\}) \neq \xi(Y_{j},\cP''_{n,z} \cup \{Y_{j}\}	)^{1/(p \wt q)} \Bigg)^p, 
	\end{split}
	&
\end{align}
where $Y_j$ is uniformly distributed on $S_j(z) \cap \Wc_n \setminus Q_z$ and independent of $\cP$ and $\cP'$ for each $j\in\N$. Next, use $\sum_{j=1}^\infty |S_{j}(z) \cap \Wc_n \setminus Q_z |^{1+1/(p\wt q)} \le n^{1+1/(p\wt q)}$ and
\begin{align*}
	& \p( \xi(Y_j,\cP_n \cup \{Y_j\}) \neq \xi(Y_j,\cP''_{n,z} \cup \{Y_j\}	) \\
 	&\le \p( R(0,\cP\cup \{0\}) \ge j-1-\sqrt{d}/2 ) + \p( R(0,\cP\cup \{0\})\ge r ).
\end{align*}
as well as \eqref{E:Approximation3a} to see that \eqref{E:Approximation1d} is uniformly bounded above.

\textit{Step 4.}
Finally, we combine the estimates from the second and third step with \eqref{E:Approximation0} in order to obtain
\begin{align*}
	\Big \| \sum_{z\in B_n} X_{n,z}  \Big\|_p &\le c_{1,p} \Big( \sum_{\substack{z\in B_n,\\ Q_z \subseteq \Wc_n} } \| X_{n,z}^2 \|_{p/2} + \sum_{\substack{z\in B_n,\\ Q_z \not\subseteq \Wc_n} } \| X_{n,z}^2 \|_{p/2} \Big)^{1/2} \\
	&\le c_{1,p} \Big( \sum_{\substack{z\in B_n,\\ Q_z \subseteq \Wc_n} } c_3 n^{-1/d} + \sum_{\substack{z\in B_n,\\ Q_z \not\subseteq \Wc_n} } c_4  \Big)^{1/2} \\
	&\lesssim r^{1/2} n^{(d-1)/2d} .
\end{align*}
In particular, relying on this last result
\begin{align*}
	\Big\| \max_{k\le n} \Big| \sum_{z\in B_k} X_{z,k} \Big| \Big\|_p \lesssim r^{1/2} n^{(d-1)/2d + 1/p}.
\end{align*}
Applying Lemma~\ref{L:LILTool}, we obtain
\[
	 \sum_{z\in B_n} X_{z,n} = O_{a.s.}(r^{1/2} n^{(d-1)/2d + 1/p} (\log n)^{1+\delta} )
\]
for all $\delta>0$. This completes the proof.
\end{proof}

\begin{proposition}\label{P:AbstractLIL}[An abstract LIL and CLT]
Consider the martingale differences induced by \eqref{E:MDS2}. Then
\begin{align}
		& \limsup_{n\to\infty} \frac{\pm \sum_{z\in B_n} \E[ \Delta(z,\infty) | \cF_z ] }{ \sqrt{2 n \log \log n} } = \sigma \quad a.s., \label{E:LIL1} \\
		& n^{-1/2}  \sum_{z\in B_n} \E[\Delta(z,\infty) | \cF_z] \Rightarrow \cN(0,\sigma^2), \quad n\to\infty, \label{E:CLT1}
\end{align}
where $\sigma^2 = \E[ \E[ \Delta(0,\infty) | \cF_0]^2 ] < \infty $. 
$\sigma^2>0$ if $\E[ \Delta(0,\infty) | \cF_0]$ has a non-degenerate distribution.

In particular, in conjunction with Proposition~\ref{P:Approximation}
\begin{align}
		& \limsup_{n\to\infty} \frac{\pm (\sum_{y\in \cPc_n} \Psi(y,\cP_n) - \E[\sum_{y\in \cPc_n} \Psi(y,\cP_n)] )}{ \sqrt{2 n \log \log n} } = \sigma \quad a.s., \label{E:LIL2}  \\
		& n^{-1/2}  \Big( \sum_{y\in \cPc_n} \Psi(y,\cP_n) - \E\Big[\sum_{y\in \cPc_n} \Psi(y,\cP_n)\Big] \Big) \Rightarrow \cN(0,\sigma^2), \quad n\to\infty. \label{E:CLT2}
\end{align}
\end{proposition}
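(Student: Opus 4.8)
The plan is to reduce Proposition~\ref{P:AbstractLIL} to a law of the iterated logarithm and a central limit theorem for a \emph{stationary ergodic martingale-difference field} and then invoke the abstract machinery behind \cite{krebs2021LIL}. \textbf{Martingale reduction.} As recorded after \eqref{E:MDS2}, one has the exact identity
\[
  \sum_{y\in\cPc_n}\Psi(y,\cP_n)-\E\Big[\sum_{y\in\cPc_n}\Psi(y,\cP_n)\Big]=\sum_{z\in B_n}\E[\Delta(z,n)\mid\cF_z],
\]
a sum of martingale differences relative to the lexicographically ordered Poisson filtration $(\cF_z)_{z\in\Z^d}$. Proposition~\ref{P:Approximation}, \eqref{E:ApproximationMain1} (valid because $p>2d$), lets us replace $\Delta(z,n)$ by $\Delta(z,\infty)$ at the cost of an $o_{a.s.}(n^{1/2})$ term, so it suffices to prove \eqref{E:LIL1}--\eqref{E:CLT1} for the genuine martingale $M_n:=\sum_{z\in B_n}\E[\Delta(z,\infty)\mid\cF_z]$: an $o_{a.s.}(n^{1/2})$ term is negligible after dividing by $\sqrt{2n\log\log n}$, and an $o_{P}(\sqrt n)$ term after dividing by $\sqrt n$, so \eqref{E:LIL2}--\eqref{E:CLT2} for the centered statistic follow at once (using also $\#B_n/n\to1$, since $\#B_n=n+O(n^{(d-1)/d})$).

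\textbf{Stationarity and the variance constant.} Because $\xi$ is translation invariant, $(\cP,\cP')$ is stationary, and the lexicographic past transforms covariantly under shifts (the region $\cup_{x\preceq z}Q_x$ translated by $v$ is $\cup_{x\preceq z+v}Q_x$, as $x\preceq z\Leftrightarrow x+v\preceq z+v$), the field $(\E[\Delta(z,\infty)\mid\cF_z])_{z\in\Z^d}$ is stationary under $\Z^d$-shifts, has mean zero, and by \eqref{E:ApproximationMain0} has finite $p$-th moment with $p>2d\ge2$; it is ergodic since the Poisson process is mixing under translations. Hence $\sigma^2=\E[\E[\Delta(0,\infty)\mid\cF_0]^2]<\infty$, and since $\E[\Delta(0,\infty)\mid\cF_0]$ is centered, $\sigma^2$ is precisely its variance, so $\sigma^2>0$ if and only if $\E[\Delta(0,\infty)\mid\cF_0]$ is non-degenerate; this is where the non-degeneracy hypothesis enters, and the sufficient condition in terms of the add-one cost is the one recalled via Penrose--Yukich \cite[Theorem 2.1]{penrose2001central}.

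\textbf{Limit theorems.} Writing the predecessor $\sigma$-field of $z$ as $\cF_{z^-}$, stationarity gives $\E[\E[\Delta(z,\infty)\mid\cF_z]^2\mid\cF_{z^-}]$ as a fixed shift-covariant function of the configuration with expectation $\sigma^2$, so the multiparameter spatial ergodic theorem along the regular averaging sequence $B_n$ yields $(\#B_n)^{-1}\sum_{z\in B_n}\E[\E[\Delta(z,\infty)\mid\cF_z]^2\mid\cF_{z^-}]\to\sigma^2$ a.s.; the uniform $p$-th moment bound with $p>2$ supplies the conditional Lindeberg/Lyapunov condition, and the martingale central limit theorem gives \eqref{E:CLT1}. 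For \eqref{E:LIL1} the same three ingredients — exponential stabilization of $\xi$ (hence of the bounded score $\Psi$), the uniform moment bound \eqref{E:ApproximationMain0}, and the a.s.\ convergence of the normalized conditional variances to the constant $\sigma^2$ — are exactly the hypotheses of the general law of the iterated logarithm for stabilizing functionals in Krebs \cite{krebs2021LIL}, which delivers \eqref{E:LIL1} with limiting constant $\sigma$.

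\textbf{Main obstacle.} The delicate point is the a.s.\ convergence $(\#B_n)^{-1}\sum_{z\in B_n}(\cdot)\to\sigma^2$ along the specific increasing boxes $B_n$, together with checking that the lexicographic ordering meshes with the $\Z^d$-shift structure so that the spatial ergodic theorem and the abstract LIL of \cite{krebs2021LIL} genuinely apply; the moment bound \eqref{E:ApproximationMain0} is precisely what controls the boundary-layer and long-stabilization-radius contributions that would otherwise spoil this convergence. Everything else — the reduction of the first step and the variance identity — is bookkeeping.
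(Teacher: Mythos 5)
Your proposal is correct and follows essentially the same route as the paper: the statements \eqref{E:LIL2}--\eqref{E:CLT2} are reduced to \eqref{E:LIL1}--\eqref{E:CLT1} via the martingale-difference identity and the $o_{a.s.}(n^{1/2})$ approximation \eqref{E:ApproximationMain1}, and the core limit theorems are obtained from the abstract LIL and normal approximation of Krebs \cite{krebs2021LIL}, whose hypotheses are verified exactly as you indicate through the uniform moment bound \eqref{E:ApproximationMain0} and the exponential stabilization estimate \eqref{E:DeltaInfty1}. The extra detail you supply on stationarity, ergodicity along $B_n$, and the conditional Lindeberg condition is just an unpacking of what the cited results require, not a different argument.
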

\begin{proof}[Proof of Proposition~\ref{P:AbstractLIL}]
We only prove \eqref{E:LIL1} and \eqref{E:CLT1}. The amendment given in \eqref{E:LIL2} and \eqref{E:CLT2} is then a direct consequence of \eqref{E:ApproximationMain1} from Proposition~\ref{P:Approximation}.

We rely on the LIL given in Krebs \cite[Theorem~3.1]{krebs2021LIL} for \eqref{E:LIL1} and the normal approximation given in Krebs \cite[Proposition 3.1]{krebs2021LIL} for \eqref{E:CLT1}. The proof is then immediate if we use additionally the uniform bounded moments condition from \eqref{E:ApproximationMain0} for the random variables $\Delta(z,\infty)$ and the exponential stabilizing condition from \eqref{E:DeltaInfty1}.
\end{proof}

\begin{proof}[Proof of Theorem~\ref{T:LILQuant}]
The result follows from Proposition~\ref{P:AbstractLIL} and the Bahadur representation from Theorem~\ref{T:BahadurPoissonUnif}.
\end{proof}

\subsection{The asymptotic normality}
\begin{lemma}\label{L:FHatAlt}
We have
\begin{align}
	\wh F_n(x) - F(x) &=   \frac{1}{\Mc_n} \Big\{ \sum_{y\in \cPc_n} \1{ \xi(y,\cP_n) \le x } - \E\Big[ \sum_{y\in \cPc_n} \1{ \xi(y,\cP_n) \le x } \Big] \Big\} \nonumber\\	
	&\quad - F(x) \frac{\Mc_n - \E[ \Mc_n ]}{\Mc_n}+ O\Big( \frac{ |\Wc_n| }{ \Mc_n } n^{-4} \Big). \label{E:FHatAlt1}
	\end{align}
\end{lemma}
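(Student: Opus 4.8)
The plan is to obtain \eqref{E:FHatAlt1} by pure algebra together with one genuinely probabilistic estimate. Writing $S_n(x)=\sum_{y\in\cPc_n}\1{\xi(y,\cP_n)\le x}$, so that $\wh F_n(x)=S_n(x)/\Mc_n$, and using $\E[\Mc_n]=|\Wc_n|$ (as $\cP$ has unit intensity), I would split
\begin{align*}
	\Mc_n\big(\wh F_n(x)-F(x)\big) &= S_n(x) - \Mc_n F(x) \\
	&= \big(S_n(x) - \E[S_n(x)]\big) + \big(\E[S_n(x)] - |\Wc_n|\,F(x)\big) \\
	&\quad - F(x)\big(\Mc_n - \E[\Mc_n]\big).
\end{align*}
Dividing by $\Mc_n$ reproduces the first and second displayed terms of \eqref{E:FHatAlt1}, so the whole statement reduces to showing that the middle bracket is $O(|\Wc_n|\,n^{-4})$ uniformly in $x\in\R$; on the null event $\{\Mc_n=0\}$ the identity is read with the measure-theoretic conventions of Section~\ref{Section_Model}.

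It therefore remains to bound $\big|\E[S_n(x)] - |\Wc_n| F(x)\big|$. By the Mecke formula, exactly as in the proof of Proposition~\ref{P:UniformConv},
\[
	\E[S_n(x)] = \int_{\Wc_n} \p\big( \xi(y,\cP_n\cup\{y\}) \le x \big)\,\diff y ,
\]
so it suffices to check that $\big|\p(\xi(y,\cP_n\cup\{y\})\le x) - F(x)\big| \lesssim n^{-4}$ for every $y\in\Wc_n$. Since such a $y$ has distance at least $r$ from $\partial W_n$, on the event $\{R(y,\cP\cup\{y\})\le r\}$ the strong stabilization property \eqref{E:Stabilization1} confines the score to $B(y,r)\subseteq W_n$ and hence forces $\xi(y,\cP_n\cup\{y\}) = \xi(y,\cP\cup\{y\})$; by translation invariance the law of the latter is $F$. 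The complementary event has probability at most $C_{stab}\exp(-c_{stab} r^{\as}) = C_{stab}\,n^{-c_{stab}c^*}\le C_{stab}\,n^{-4}$ by \eqref{E:Stabilization2} and the choice $r=(c^*\log n)^{1/\as}$ with $c^*\ge 4/c_{stab}$. Integrating this bound over $\Wc_n$ and dividing by $\Mc_n$ yields the error term in \eqref{E:FHatAlt1}.

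The one point that needs a little care is the identity $\xi(y,\cP_n\cup\{y\}) = \xi(y,\cP\cup\{y\})$ on $\{R(y,\cP\cup\{y\})\le r\}$: the stabilization axiom \eqref{E:Stabilization1} is stated for \emph{finite} additional point sets $\cA$, whereas $\cP\setminus W_n$ is infinite. I would handle this by the standard truncation-to-the-infinite-volume-score argument, approximating $\cP\cup\{y\}$ by $\cP_n\cup\{y\}\cup(\cP\cap B(y,k)\cap W_n^c)$, applying \eqref{E:Stabilization1} for each finite $k$ (all these configurations agree inside $B(y,r)$ whenever the radius of stabilization is at most $r$), and letting $k\to\infty$; this is the same bookkeeping already used to pass between $\xi$ and $\xi_r$ (cf.\ Lemma~\ref{Lemma1} and Proposition~\ref{P:UniformConv}) and introduces no new idea. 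Assembling the three terms completes the proof.
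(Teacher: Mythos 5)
Your proof is correct and follows essentially the same route as the paper: both add and subtract $\E[\sum_{y\in\cPc_n}\1{\xi(y,\cP_n)\le x}]/\Mc_n$, identify the fluctuation term and the $-F(x)(\Mc_n-\E[\Mc_n])/\Mc_n$ term using $\E[\Mc_n]=|\Wc_n|$, and control the remaining bias $\E[S_n(x)]-|\Wc_n|F(x)$ via the Slivnyak--Mecke formula together with exponential stabilization and the choice $c^*\ge 4/c_{stab}$. Your extra remark on passing from the finite-$\cA$ stabilization axiom to the infinite configuration $\cP\cup\{y\}$ is a point the paper leaves implicit, but it introduces no new idea.
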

\begin{proof}
The proof is a direct consequence of the Slyvniak-Mecke formula and the fact that the scores stabilize exponentially. We subtract and add the term $\E[  \sum_{y\in \cPc_n} \1{ \xi(y,\cP_n) \le x } ] / \Mc_n$ to the left-hand side of \eqref{E:FHatAlt1}, then the first term on the right-hand side of \eqref{E:FHatAlt1} is immediate and the second term follows from
\begin{align*}
	&\frac{1}{\Mc_n} \E\Big[  \sum_{y\in \cPc_n} \1{ \xi(y,\cP_n) \le x } \Big]  - \p( \xi(0,\cP\cup \{0\})\le x) \\
	&= F(x) \frac{ |\Wc_n| - \Mc_n}{\Mc_n} + O\Big( \frac{ |\Wc_n| }{ \Mc_n } e^{-c_{stab} r^{\as} } \Big)
\end{align*}
in conjunction with the definition of $r$.	
\end{proof}

\begin{proof}[Proof of Theorem~\ref{T:WeakConvergence}]
Provided $( \sqrt{n}(\wh F_n(x) - F(x)): x\in I)_n \Rightarrow (W(x):x\in I)$ in $D(I)$, it follows from Theorem~\ref{T:BahadurPoissonUnif} (i) that $(\sqrt{n}(\wh \psi_{p,n} - \psi_p): p_0 \le p \le p_1)_n \Rightarrow (W(x)/f(x): \psi_{p_0}\le x\le \psi_{p_1})$ in the Skorokhod topology for $I=[\psi_{p_0},\psi_{p_1}]$.

In order to prove now the first statement, we proceed in three steps. In the first step, we establish the convergence of the finite dimensional distributions. In the second step, we verify the tightness. Finally, in the third step, we show the continuity property of the sample paths of $W$.

\textit{Step 1.} Let $m\in\N_+$, $x_1,\ldots,x_m\in\R$ and $a_1,\ldots,a_m \in\R$. We use the score $\Psi(y,\cP_n)$, which is defined in \eqref{E:FidiScores}, for $y\in\cPc_n$. Then we have by Lemma~\ref{L:FHatAlt}
\begin{align*}
		&\sqrt{n} \Big( \sum_{i=1}^m a_i (\wh F_n(x_i) - F(x_i))	\Big) \\
		&= \frac{n}{\Mc_n} \cdot \frac{1}{\sqrt{n}}  \Big( \sum_{y\in \cPc_n} \Psi(y,\cP_n) - \E\Big[ \sum_{y\in\cPc_n} \Psi(y,\cP_n)\Big] \Big) + o_{a.s.}(1).
\end{align*}
In order to show that this sequence converges to a normal distribution, it is sufficient to consider the convergence of
\begin{align}
		&  \frac{1}{\sqrt{n}} \Big( \sum_{y\in \cPc_n} \Psi(y,\cP_n) - \E\Big[ \sum_{y\in\cPc_n} \Psi(y,\cP_n) \Big] \Big). \label{E:Normality}
\end{align}
We rely on Proposition~\ref{P:AbstractLIL} to obtain that \eqref{E:Normality} converges weakly to $\cN(0,\sigma^2)$ for some $\sigma^2<\infty$. This shows the first step.

\textit{Step 2.}
Define for $z\in B_n$
\begin{align}
	&\wt\Delta_r(z,n) \nonumber\\
	\begin{split}\label{E:Tight0}
	&\coloneqq \sum_{y\in \cPc_n  } \1{ \xi_r(y,\cP_n) \in (u,x]} - \sum_{y'\in \cPc_{n,z}  } \1{ \xi_r(y',\cP''_{n,z}) \in (u,x] }
 \\	
 &= \sum_{y\in \cPc_n  \cap O_z  } \1{ \xi_r(y,\cP_n) \in (u,x]} - \sum_{y'\in \cPc_{n,z} \cap O_z  } \1{ \xi_r(y',\cP''_{n,z}) \in (u,x] },
	\end{split}
\end{align}
where $O_z=B(z,r+\sqrt{d}/2)$. The last equality holds because the scores $\xi_r$ stabilize within a distance of $r+\sqrt{d}/2$ to $z$.

In order to verify the tightness, we rely on the criterion of Bickel and Wichura \cite{bickel1971convergence}, which is a refined and generalized argument of Billingsley \cite{billingsley1968convergence}. We proceed as follows: First, we reduce the setting from the interval $I=[a,b]$ to a sequence of grids $(\Gamma_n)_n$, where $\Gamma_n = \{a+ k (b-a)/n: k\in \{0,\ldots,n\} \}$. Second, we verify the moment condition for the processes $(\sqrt{n}(\wh F_n(x)-F(x)): x\in \Gamma_n)_n$.

Let $x,u\in I$. Let $\ul x, \ol x\in \Gamma_n$ be the points closest to $x$ such that $\ul x \le x \le \ol x$. Let $\ul u \le u \le \ol u$ be defined similarly for $u$. Then using the monotonicity of $F$ and that the derivative of $F$ exists on $I$ and is uniformly bounded, we achieve on the one hand
\begin{align*}
		&\sqrt{n}\Big( \big( \wh F_n(x) - F(x)) - (\wh F_n(u)-F(u) \big) \Big)\\
		 &\le \sqrt{n}\Big( \big( \wh F_n(\ol x) - F(\ol x)) - (\wh F_n(\ul u)-F(\ul u) \big) \Big) + \frac{ \sup_{x\in I} f(x) (b-a)}{\sqrt{n}}.
\end{align*}
On the other hand,
\begin{align*}
		&\sqrt{n}\Big( \big( \wh F_n(x) - F(x)) - (\wh F_n(u)-F(u) \big) \Big)\\
		 &\ge \sqrt{n}\Big( \big( \wh F_n(\ul x) - F(\ul x)) - (\wh F_n(\ol u)-F(\ol u) \big) \Big) - \frac{ \sup_{x\in I} f(x) (b-a)}{\sqrt{n}}.
\end{align*}
Thus,
\begin{align*}
	\sup_{x,u\in I} &\sqrt{n}\Big| \big( \wh F_n(x) - F(x)) - (\wh F_n(u)-F(u) \big)  \Big| \\
	&\le \max_{x,u\in \Gamma_n} \sqrt{n}\Big| \big( \wh F_n(x) - F(x)) - (\wh F_n(u)-F(u) \big)  \Big| + O(n^{-1/2}).
\end{align*}
Hence, we can in the following verify the moment condition for intervals $[u,x]$ from $\Gamma_n$. For this purpose, we rely on the following decomposition, which is for each point $x$ as follows.
\begin{align}
	  \wh F_n(x) - F(x)  &= (\wh F_n(x) - \wh F_{n,r} (x) ) \label{E:Tight1} \\
	  \begin{split}\label{E:Tight2}
	  &\quad + (\Mc_n)^{-1}\Big(   \sum_{y\in\cPc_n} \1{\xi_r(y,\cP_n) \le x} \Big) \\
	  &\qquad\qquad -  \E\Big[ \sum_{y\in\cPc_n} \1{\xi_r(y,\cP_n) \le x} \Big] \Big) \1{ \Mc_n >0}
	  \end{split} \\
	  &\quad + \Big( \frac{|\Wc_n|}{\Mc_n}-1 \Big) \p( \xi_r(0,\cP \cup \{0\}) \le x ) \1{\Mc_n >0} \label{E:Tight3} \\
	  &\quad + \p( \xi_r(0,\cP \cup \{0\}) \le x ) (\1{\Mc_n >0} -1) \label{E:Tight3b} \\
	  &\quad +  \big( \p( \xi_r(0,\cP \cup \{0\}) \le x ) - \p( \xi(0,\cP \cup \{0\}) \le x ) \big). \label{E:Tight4}
\end{align} 
In the following, let $x,u \in \Gamma_n$, $u\le x$. We show there is a $C\in\R_+$, which does not dependent on $n$, with the property
\begin{align}\label{E:Tight5}
	n^{2} \ \E[ |\wh F_n(x) - F(x) - \wh F_n(u) + F(u)|^4 ] \le C |x-u|^{3/2}.
\end{align}
To this end, we use the decomposition from \eqref{E:Tight1} to \eqref{E:Tight4} and show that each difference induced by a term from \eqref{E:Tight1} to \eqref{E:Tight4} is of order $|x-u|^{3/2}$ uniformly in $n\in\N$.

We begin with the difference induced by the term in \eqref{E:Tight1}. Clearly, $|\wh F_n(x) - \wh F_{n,r}(x)|^4\le |\wh F_n(x) - \wh F_{n,r}(x)|$ for each $x\in\R^d$. Moreover, relying on the estimate in \eqref{E:Lemma1.0} from Lemma~\ref{Lemma1}, we have
\begin{align*}
	\E[ \sup_{x\in\R} |\wh F_n(x) - \wh F_{n,r}(x)| ] &\le \E\Bigg[ \frac{1}{\Mc_n} \sum_{y\in \cPc_n} h(y,\cPc_n  \cup \cPd_n ) \\
	&\qquad\qquad \Bigg( \1{2 > \frac{\E[\Mc_n]}{\Mc_n} } + \1{2 \le \frac{\E[\Mc_n]}{\Mc_n} }\Bigg) \Bigg] \\
	&\le \frac{2}{\E[\Mc_n]} \E\Big[ \sum_{y\in \cPc_n} h(y,\cPc_n  \cup \cPd_n ) \Big] \\
	& + \E[ \1{ \Mc_n \le \E[ \Mc_n ] /2 } ].
\end{align*}
The first term can be treated with the Slivnyak-Mecke formula as in \eqref{E:Lemma1.1}, the second term decays exponentially in $n$ by Lemma~\ref{Lemma0}. Consequently, there is a $C\in\R_+$ such that
\begin{align*}
		n^{2} \ \E[ | \wh F_n(x) - \wh F_{n,r}(x) - \wh F_n(u) + \wh F_{n,r}(u) |^4 ] \le C n^{-2} \lesssim |x-u|^2 ,
\end{align*}
where the last conclusion follows because $n^{-1}\lesssim |x-u|$ for $x,u\in \Gamma_n$.

We continue with the difference induced by \eqref{E:Tight3}. 
\begin{align}
	&n^{2} \ \E\Big[ \Big( \frac{ |\Wc_n|}{\Mc_n} -1 \Big)^4 \1{\Mc_n>0}   \Big] \ \p( \xi_r(0,\cP \cup \{0\} ) \in (u, x] )^4 \nonumber \\
	&\lesssim \frac{ n^{2} }{ |\Wc_n|^4 } \ \E\Big[ \Big| |\Wc_n| - \Mc_n \Big|^4 \Big(\frac{ |\Wc_n| }{\Mc_n}\Big)^4 \1{\Mc_n>0} \Big] \Big( |x-u|^4 + n^{-16} \Big), \label{E:Tight5b}
\end{align}
using that uniformly in $x$
\[
	| \p( \xi_r(0,\cP\cup \{0\}) \le x ) -  \p( \xi(0,\cP \cup \{0\})  \le x ) | = O(n^{-4}).
	 \]
Now, $\E[ | \Mc_n - |\Wc_n| |^4] = |\Wc_n| (1+3|\Wc_n| )$ and $|\Wc_n|^4 \le n^4$. Furthermore, the quotient $|\Wc_n| / \Mc_n$ is less than 2 with a probability of at least $1-2e^{-|\Wc_n|/12}$. So, it is a routine to show that \eqref{E:Tight5b} is of order $|x-u|^4$ (uniformly in $n$).

The difference induced by the term in \eqref{E:Tight3b} is 
\[
	n^2 \ \E[ | \p( \xi_r(0,\cP \cup \{0\} ) \in (u,x] ) |^4 \1{\Mc_n = 0} ] \le n^2 \exp( - |\Wc_n|)
\]
and $o(|x-u|^{3/2})$. Further, we see for the difference induced by the term in \eqref{E:Tight4}
\begin{align*}
	&n^{2} \ \E[ | \p( \xi_r(0,\cP \cup \{0\} ) \in (u, x] ) - \p( \xi(0,\cP\cup \{0\} ) \in (u,x] )  |^4 ] \lesssim n^{-12},
\end{align*}
which is  $o(|x-u|^{3/2})$.

Finally, we come to the main difference, which is induced by \eqref{E:Tight2}.
\begin{align}\begin{split}\label{E:Tight6}
	 &\E\Big[ \Big| \frac{\sqrt{n}}{\Mc_n}	\Big|^4 \ \Big| \sum_{y\in\cPc_n} \1{\xi_r(y,\cP_n) \in (u, x]} \\
	 &\qquad\qquad\qquad - \E[ \sum_{y\in\cPc_n} \1{\xi_r(y,\cP_n) \in (u, x]} ] \Big|^4  \1{\Mc_n >0 }	\Big].
\end{split}\end{align}
We consider the event
$A_n = \{ |\Wc_n| - |\Wc_n|^{3/4} \le \Mc_n\le |\Wc_n| + |\Wc_n|^{3/4}\} $.
Then $\p(A_n^c)$ decays exponentially in $n$. Consequently, for the moment condition, it is enough to restrict the random variable in the expectation in \eqref{E:Tight6} to the event $A_n$. In particular, on $A_n$, $\Mc_n$ is at least 1 once $n$ is larger than some critical $n_0$, which we can assume in the following.
In this case and up to a multiplicative constant the expectation is bounded above by
\begin{align}
	&n^{-2} \ \E\Big[ \Big| \sum_{y\in\cPc_n} \1{\xi_r(y,\cP_n) \in (u, x]} - \E[ \sum_{y\in\cPc_n} \1{\xi_r(y,\cP_n) \in (u, x]} ] \Big|^4 	\Big] \nonumber \\
	&=n^{-2} \ \E\big[ | \sum_{z\in B_n} \E[ \wt\Delta_r(z,n) | \cF_z ] |^4 \big], \label{E:Tight6b}
\end{align}
where $\wt\Delta_r(z,n)$ is given in \eqref{E:Tight0}.
By Burkholder's inequality \eqref{E:Tight6b} is at most
\begin{align}
	&n^{-2} \ \E\big[ | \sum_{z\in B_n} \E[ \wt\Delta_r(z,n) | \cF_z ]^2 |^2 \big] \nonumber \\
	&= n^{-2} \sum_{\substack{z, z' \in B_n:\\ \|z-z'\|_\infty\le 2r+3\sqrt{d}}} \E\big[ \E[ \wt\Delta_r(z,n) | \cF_z ]^2 \ \E[\wt\Delta_r(z',n) | \cF_{z'} ]^2  \big] \label{E:Tight7}
 \\
	&\quad + n^{-2} \sum_{\substack{z, z' \in B_n:\\ \|z-z'\|_\infty > 2r+3\sqrt{d}}} \E[ \E[\wt\Delta_r(z,n) | \cF_z ]^2 ] \ \E[\E[\wt\Delta_r(z',n) | \cF_{z'} ]^2  ].  \label{E:Tight8}
\end{align}
The sum in \eqref{E:Tight7} collects the products of squared martingale differences whose locations $z,z'$ are close, whereas the sum in \eqref{E:Tight8} collects the independent products because the corresponding locations are far enough apart.

We begin with the sum in \eqref{E:Tight7} which we can easily treat with the H{\"o}lder inequality by considering the fourth moments. Choose $\ol c>0$ such that we have for the event $A_{n,z} = \{ \cPc_n(O_z) \le  |O_z|  + \ol c |O_z|^{3/4} \}$ that $\p(A_{n,z}^c) \lesssim n^{-4}$ uniformly in $z\in B_n$. We have 

\begin{align*}
	&\E\big[ \E[ \wt\Delta_r(z,n) | \cF_z ]^4 \big] \\
	&\le 16 \ \E\Big[ \Big| \sum_{y\in\cPc_n \cap O_z} \1{ \xi_r(y,\cP_n)\in (u,x] } \Big|^4 \ \big\{ \1{A_{n,z}} + \1{ A_{n,z}^c}\Big\} 	\Big] \\
	&\le 16 \big(|O_z| + |O_z|^{3/4} \big)^3 \ \E\Big[  \sum_{y\in\cPc_n \cap O_z} \1{ \xi_r(y,\cP_n)\in (u,x] }  \Big] \\
	&\quad +  16 \ \E\big[ \cPc_n(O_z)^8 \big]^{1/2} \ \p( A_{n,z}^c)^{1/2}.
\end{align*}
The last term is of order $r^{4d} n^{-2}$ and is negligible. The first term is of order $r^{4d} |x-u| + r^{4d} n^{-4} \lesssim r^{4d} |x-u|$. Both upper bounds do not depend on the position $z$. So, the sum in \eqref{E:Tight7} is at most (up to a multiplicative constant)
\[
	n^{-2} \sum_{\substack{z, z'\in B_n:\\ \|z-z'\|_\infty\le 2r+3\sqrt{d}}} r^{4d} |x-u| \lesssim n^{-1} r^{5d} |x-u| \lesssim  |x-u|^{3/2}
\]
because $r$ grows logarithmically in $n$. So, \eqref{E:Tight7} satisfies the moment condition.

We conclude with the sum in \eqref{E:Tight8}. In order to establish the moment condition for this sum, we rely on Lemma~\ref{Lemma4}. For each $p>1$ there is a constant $C_p\in\R_+$ such that for all $z\in B_n$ and $n\in\N$
\begin{align}
 	&\E[ \E[\wt\Delta_r(z,n) | \cF_z ]^2 ] \le \E[\wt\Delta_r(z,n)^2 ] \nonumber \\
	&=\E\Big[ \Big(\sum_{y\in \cPc_n  } \1{ \xi_r(y,\cP_n) \in (u,x]} - \sum_{y'\in \cPc_{n,z}  } \1{ \xi_r(y',\cP''_{n,z}) \in (u,x] } \Big)^2 \Big] \nonumber \\
	&\le C_p \ \p(  \xi_r(0,\cP \cup \{0\} ) \in (u,x] )^{1/p} \lesssim |x-u|^{1/p} + n^{-4/p} . \nonumber
\end{align}
Choosing $p = 4/3$, there is a constant $C\in\R_+$ such that \eqref{E:Tight8} is at most
\[
	C n^{-2} \sum_{\substack{z, z' \in B_n:\\ \|z-z'\|_\infty > 2r+3\sqrt{d}}}  (|x-u|^{3/4} + n^{-3})^2 \lesssim |x-u|^{3/2}. 
\]
This yields that the difference which is induced by the term in \eqref{E:Tight2} is at most $ C |x-u|^{3/2}$ for a $C\in\R_+$.
Combining all estimates, shows that \eqref{E:Tight5} holds.

\textit{Step 3.} Recall that $F$ admits a bounded density $f$ on $I$ by assumption. Since $W$ is a Gaussian process, we have for each $k\in\N$
\[
	\E[ |W(t)-W(s)|^{2k} ] = \prod_{i=1}^k (2i-1) \E[ |W(t)-W(s)|^2 ]^k.
\]
To obtain an explicit expression of the covariance, we  rely on \eqref{E:MDS2} and define
\begin{align*}
	\Delta_t(0,\infty) &= \sum_{y\in \cP\setminus Q_0} \Big(\1{ \xi(y,\cP)\le t} - \1{\xi(y,\cP''_0)\le t} \Big)\\
	&\quad +	\sum_{y\in \cP\cap Q_0} \Big( \1{ \xi(y,\cP)\le t} - F(t) \Big) \\
	&\quad - \sum_{y\in \cP''_0 \cap Q_0} \Big( \1{ \xi(y,\cP''_0)\le t} - F(t)\Big).
\end{align*}
Then by Proposition~\ref{P:AbstractLIL}
\begin{align*}
	\E[ |W(t)-W(s)|^2 ] &= \E[ \E[\Delta_t(0,\infty) - \Delta_s(0,\infty) | \cF_0]^2 ] \\
	&\le \E[ |\Delta_t(0,\infty)-\Delta_s(0,\infty)|^2 ].
\end{align*}
Finally, in conjunction with the existence of $f$ on $I$, we can apply the amendment of Lemma~\ref{Lemma4} in \eqref{E:Lemma4.0B}: For each $p>1$ there is a $C_p\in\R_+$ such that
\[
	\E[ |\Delta_t(0,\infty)-\Delta_s(0,\infty)|^2 ] \le C_p |t-s|^{1/p}.
\]
Consequently, by the Kolmogorov-Chentsov continuity theorem and given $k$ and $p$, there is a continuous modification of $W$ on $I$ which is H{\"o}lder continuous with exponent $(k/p-1)/k$. Obviously, we find for each $\beta\in (0,1/2)$, parameter $k\in\N$ and $p>1$ such that $(k/p-1)/k > \beta$. This completes the proof.
\end{proof}

\section{Some useful results}\label{AppendixLemmas}

\begin{lemma}\label{Lemma0}
If $\lambda,t>0$ and $Z\sim\poi(\lambda)$, then 
$$
	\p( |Z-\lambda| \ge t) \le 2\exp[ - t^2/(2(\lambda+t)) ].
$$
In particular, $\p( Z/\lambda \ge 2 ) \le 2 e^{-\lambda/4}$ and $\p( Z/\lambda \le 1/2 ) \le 2 e^{-\lambda/12}$.
\end{lemma}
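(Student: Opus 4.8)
The plan is to apply the exponential Chernoff--Markov bound separately to the upper and lower tails, using the explicit moment generating function of the Poisson law, and then to collapse the two resulting rate functions to the common (slightly weaker) exponent $t^2/(2(\lambda+t))$ by means of two elementary one-variable inequalities.

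First I would recall that for $Z\sim\poi(\lambda)$ one has $\E[e^{sZ}]=\exp(\lambda(e^s-1))$ for every $s\in\R$. For the upper tail, for any $s>0$ Markov's inequality gives $\p(Z\ge\lambda+t)\le e^{-s(\lambda+t)}\E[e^{sZ}]=\exp(\lambda(e^s-1)-s(\lambda+t))$, and optimising over $s$ (the minimiser is $s=\log(1+t/\lambda)$) yields
\[
	\p(Z\ge\lambda+t)\le\exp\big(-\lambda\,h(t/\lambda)\big),\qquad h(x)=(1+x)\log(1+x)-x .
\]
For the lower tail, for $s>0$ one similarly obtains $\p(Z\le\lambda-t)\le e^{s(\lambda-t)}\E[e^{-sZ}]=\exp(\lambda(e^{-s}-1)+s(\lambda-t))$; when $0<t<\lambda$ the optimal choice $s=-\log(1-t/\lambda)$ gives $\p(Z\le\lambda-t)\le\exp(-\lambda\,g(t/\lambda))$ with $g(x)=x+(1-x)\log(1-x)$, while for $t\ge\lambda$ the event $\{Z\le\lambda-t\}$ has probability at most $e^{-\lambda}$ (and is empty for $t>\lambda$, since $Z\ge0$), which is easily checked to respect the asserted bound. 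Summing the two one-sided estimates gives $\p(|Z-\lambda|\ge t)\le\exp(-\lambda h(t/\lambda))+\exp(-\lambda g(t/\lambda))$.

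It then remains to verify the elementary inequalities $h(x)\ge\frac{x^2}{2(1+x)}$ and $g(x)\ge\frac{x^2}{2}\ge\frac{x^2}{2(1+x)}$ for $x\ge0$ (resp.\ $x\in[0,1)$). Both reduce to two differentiations: $g(0)=g'(0)=0$ together with $g''(x)=1/(1-x)\ge1$ gives $g(x)\ge x^2/2$, and a short computation shows that $\psi(x)\coloneqq h(x)-\frac{x^2}{2(1+x)}$ satisfies $\psi(0)=\psi'(0)=0$ and $\psi''(x)=\frac{x^2+2x}{(1+x)^3}\ge0$, hence $\psi\ge0$ on $[0,\infty)$. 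Substituting these bounds and setting $x=t/\lambda$ turns each exponent into $\frac{t^2}{2(\lambda+t)}$, which gives $\p(|Z-\lambda|\ge t)\le2\exp\big(-t^2/(2(\lambda+t))\big)$.

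The two displayed special cases are then immediate: taking $t=\lambda$ gives $\p(Z/\lambda\ge2)\le\p(|Z-\lambda|\ge\lambda)\le2\exp(-\lambda^2/(4\lambda))=2e^{-\lambda/4}$, and taking $t=\lambda/2$ gives $\p(Z/\lambda\le1/2)\le\p(|Z-\lambda|\ge\lambda/2)\le2\exp(-(\lambda/2)^2/(3\lambda))=2e^{-\lambda/12}$. There is no genuine obstacle here; the only mildly technical points are the optimisation of the Chernoff exponent and the two convexity-type inequalities above, both of which are routine calculus.
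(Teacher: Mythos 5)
Your proof is correct and complete: the Chernoff bound with the Poisson moment generating function, the optimisation giving the Bennett rate functions $h$ and $g$, the convexity inequalities $h(x)\ge x^2/(2(1+x))$ and $g(x)\ge x^2/2$, and the treatment of the degenerate case $t\ge\lambda$ all check out, as do the two specialisations $t=\lambda$ and $t=\lambda/2$. The paper states this lemma without proof as a standard Poisson concentration fact, and your argument is exactly the standard derivation one would supply, so there is nothing to compare against and nothing to fix.
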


\begin{lemma}\label{Lemma1}
Let $\tau, k\ge 0$ and $r= (c^* \log n)^{1/\as}$ for $c^* \ge (\tau+k)/c_{stab}$. 
\begin{itemize} \setlength{\itemsep}{2mm}
	\item [(i)] For each $\epsilon\in \R_+$ we have  
	$$	
		\p \big( \sup_{x\in\R} |\wh F_n(x) - \wh F_{n,r}(x) | \ge \epsilon n^{-k} \big) = O(n^{-\tau}).
	$$
	\item [(ii)] For $x\in \R$, let
	\begin{align*}
		\wt F_{n}(x) &=  \frac{1}{M_n} \sum_{y\in \cP_n} \1{ \xi (y,\cP_n) \le x}, \\
		\wt F_{n,r}(x) &=  \frac{1}{M_n} \sum_{y\in \cP_n} \1{ \xi_r (y,\cP_n) \le x}.
	\end{align*}
	There is a $c_{\tau} \in \R_+$, depending on the choice of $c^*$, such that 
	$$\p\big(\sup_{x\in\R} |\wt F_n(x) - \wt F_{n,r}(x) | \ge c_{\tau}  (n^{-k} \vee r n^{-1/d}) \big) = O(n^{-\tau}).$$
\end{itemize}
\end{lemma}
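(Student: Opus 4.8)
The plan is to dominate the supremum, pathwise, by a count of ``bad'' points — those whose score is altered by truncating the configuration at radius $r$ — and then to bound that count through the Mecke formula and the exponential stabilization estimate \eqref{E:Stabilization2}. For part~(i), fix a realization with $\Mc_n>0$ (when $\Mc_n=0$ the left-hand side vanishes). For a given $y\in\cPc_n$ the $y$-summands of $\wh F_n(x)$ and $\wh F_{n,r}(x)$ coincide for \emph{every} $x$ unless $\xi(y,\cP_n)\neq\xi_r(y,\cP_n)$, so with $h(y,P):=\1{\xi_r(y,P\cup\{y\})\neq\xi(y,P\cup\{y\})}$ we get the deterministic bound
\[
	\sup_{x\in\R}\big|\wh F_n(x)-\wh F_{n,r}(x)\big|\;\le\;\frac{1}{\Mc_n}\sum_{y\in\cPc_n}h\big(y,(\cPc_n\setminus\delta_y)\cup\cPd_n\big).
\]
Since $B(y,r)\subseteq W_n$ for $y\in\cPc_n$, the truncated score $\xi_r(y,\cP_n)=\xi(y,\cP\cap B(y,r))$ depends only on $\cP\cap B(y,r)$, and applying the strong stabilization property \eqref{E:Stabilization1} (once with added finite set $\cP_n\setminus B(y,r)$ and once with the empty set) shows that $h(y,(\cPc_n\setminus\delta_y)\cup\cPd_n)\le\1{R(y,\cP\cap B(y,r))>r}$, an indicator measurable with respect to $\cP\cap B(y,r)$.

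Taking expectations, the Mecke formula together with the translation invariance of $\cP$ gives
\[
	\E\Big[\sum_{y\in\cPc_n}h\big(y,(\cPc_n\setminus\delta_y)\cup\cPd_n\big)\Big]\;\le\;\int_{\Wc_n}\p\big(R(y,(\cP\cup\{y\})\cap B(y,r))>r\big)\,\diff y\;\le\;|\Wc_n|\,C_{stab}\,e^{-c_{stab}r^{\as}},
\]
and since $r=(c^*\log n)^{1/\as}$ with $c^*\ge(\tau+k)/c_{stab}$ this is $C_{stab}|\Wc_n|\,n^{-c^*c_{stab}}\lesssim n^{1-\tau-k}$. On the event $\{\Mc_n\ge\E[\Mc_n]/2\}$ — whose complement has probability at most $2e^{-|\Wc_n|/12}=O(n^{-\tau})$ by Lemma~\ref{Lemma0} — we have $1/\Mc_n\le 2/\E[\Mc_n]=2/|\Wc_n|\asymp n^{-1}$, so for any prescribed $c_{1,\tau}>0$ the event $\{\sup_x|\wh F_n(x)-\wh F_{n,r}(x)|\ge c_{1,\tau}n^{-k}\}\cap\{\Mc_n\ge\E[\Mc_n]/2\}$ is contained in $\{\sum_{y\in\cPc_n}h(\cdot)\ge c\,n^{1-k}\}$ for a constant $c\asymp c_{1,\tau}$; by Markov's inequality and the moment bound this has probability $\lesssim n^{1-\tau-k}/n^{1-k}=n^{-\tau}$. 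Intersecting the two events proves~(i).

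For part~(ii) the sum now runs over all of $\cP_n=\cPc_n\cup\cPd_n$. The $\cPc_n$-part is handled exactly as in~(i), with $M_n\ge\Mc_n$ in the denominator and $\E[M_n]=n$, and produces the $n^{-k}$ term. For the boundary points $\cPd_n$ there is no stabilization gain — $B(y,r)$ is no longer contained in $W_n$ — so I bound their total contribution to $\sup_x|\wt F_n(x)-\wt F_{n,r}(x)|$ crudely by $\#\cPd_n/M_n$. Now $\#\cPd_n\sim\poi(|W_n\setminus\Wc_n|)$ with $|W_n\setminus\Wc_n|\asymp r\,n^{(d-1)/d}$, so Lemma~\ref{Lemma0} gives $\p(\#\cPd_n\ge c\,r\,n^{(d-1)/d})=O(e^{-c'r n^{(d-1)/d}})=O(n^{-\tau})$ for $c$ large, while $\p(M_n<n/2)=O(e^{-n/12})$; together these yield the $r\,n^{-1/d}$ term. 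Adding the two contributions gives~(ii).

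The step I expect to be the crux is the chain $h(y,\cdot)\le\1{R(y,\cP\cap B(y,r))>r}$ followed by the tail bound $\p(R(y,(\cP\cup\{y\})\cap B(y,r))>r)\le C_{stab}e^{-c_{stab}r^{\as}}$: one has to argue carefully from \eqref{E:Stabilization1} that the bad-point event is both determined by the local configuration $\cP\cap B(y,r)$ and governed by the stabilization radius, so that the Mecke formula applies and the stronger uniform condition \eqref{E:Stabilization3} is not needed. Everything else — Markov's inequality, Poisson concentration via Lemma~\ref{Lemma0}, and the volume estimate $|W_n\setminus\Wc_n|\asymp r\,n^{(d-1)/d}$ — is routine.
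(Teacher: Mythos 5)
Your proof is correct and follows essentially the same route as the paper's: dominate the supremum by the count of points whose score changes under truncation at radius $r$, bound that count in expectation via the Mecke formula and the exponential stabilization tail \eqref{E:Stabilization2}, and finish with Markov's inequality together with the Poisson concentration of $\Mc_n$ (resp.\ $M_n$ and $\#\cPd_n$) from Lemma~\ref{Lemma0}. Your treatment of part~(ii), bounding the boundary contribution by $\#\cPd_n/M_n$ with $|W_n\setminus\Wc_n|\asymp r\,n^{(d-1)/d}$, is the same decomposition the paper writes as $|\wt F_n-\wt F_{n,r}|\le \tfrac{\Mc_n}{M_n}|\wh F_n-\wh F_{n,r}|+\tfrac{M_n-\Mc_n}{M_n}$, and the step you flag as the crux (passing from the bad-point event to the tail of the stabilization radius of the local configuration) is handled with exactly the same degree of rigor as in the paper's own proof.
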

\begin{proof}
We prove the statement in (i) in detail. We have
\begin{align}\label{E:Lemma1.0}
	&\sup_{x\in\R} |\wh F_n(x) - \wh F_{n,r}(x)| \le \frac{1}{\Mc_n} \sum_{y\in \cPc_n} h(y,\cPc_n   \cup \cPd_n ),
\end{align}
where we set
\begin{align*}
	h(y,\cPc_n \cup \cPd_n ) &= \1{ \xi_r (y,\cPc_n \cup \cPd_n ) \neq \xi (y,\cPc_n \cup \cPd_n ) }.
\end{align*}
Then by the Slivnyak-Mecke formula (see e.g.\ \cite[Theorem 4.4]{last2017lectures}),
\begin{align}
	 \E\Big[ \sum_{y\in \cPc_n} h(y,\cPc_n \cup \cPd_n )  \Big] &= \int_{\Wc_n}  \E[ h(y,\cPc_n \cup \cPd_n \cup \{y\} )  ] \diff y \nonumber \\
	 &= \int_{\Wc_n}  \p( \xi_r(y,\cP_n \cup \{y\} ) \neq \xi(y,\cP_n \cup \{y\} ) ) \diff y \nonumber \\
	 &= \int_{\Wc_n}  \p( \xi_r(y,\cP \cup \{y\} ) \neq \xi(y,\cP \cup \{y\} ) ) \diff y \nonumber \\
	 &\le |\Wc_n| \ \p( R(0,\cP \cup \{ 0\} ) > r ) \label{E:Lemma1.1}
\end{align}
because $\cP$ and $\cP_n$ agree on $B(y,r)$ by construction. Using the exponential stabilization of the score functions from \eqref{E:Stabilization2}, \eqref{E:Lemma1.1} is bounded above by 
\begin{align*}	
	&|\Wc_n| \ C_{stab} \exp( - c_{stab} r^{\as}) = C_{stab} \ \E[\Mc_n] \ n^{ - c_{stab} c^* } .
\end{align*}
Let $\epsilon>0$. We distinguish the cases whether the quotient $|\Wc_n|/ \Mc_n$ is greater or at most 2. Then using Markov's inequality and Lemma~\ref{Lemma0}, we have
\begin{align*}
	&\p\big( \sup_{x\in\R} |\wh F_n(x) - \wh F_{n,r}(x) | \ge \epsilon n^{-k} \big) \\
	 &\le  \p\big( |\Wc_n|^{-1} \sum_{y\in \cPc_n} h(y,\cPc_n\cup \cPd_n ) \ge  \epsilon n^{-k} / 2 \big) + \p\big( |\Wc_n| /  \Mc_n > 2 \big) \\
	 &\le C_{stab} \frac{2 n^{k}}{\epsilon}  \ n^{ - c_{stab} c^* } + 2 \exp \Big( - \frac{ |\Wc_n| }{ 12} \Big) = O(n^{-\tau}).
\end{align*}
This shows the first statement.

The proof of (ii) is very similar to (i) and we only give a sketch. We use additionally the well-known concentration of a Poisson random variable around its mean: By Lemma~\ref{Lemma0} $\p( | M_n - \E[M_n] | \ge \E[M_n]^{1/2 + \alpha} )$ decays exponentially for each $\alpha>0$;
the same is true for the concentration of $(M_n - \Mc_n)_n$ around its mean. 

Consequently, using that $\E[M_n] = n$ and $\E[M_n - \Mc_n ] = O(n^{(d-1)/d} r)$, we arrive at (for the choice $\alpha=1/4$)
\begin{align}\label{E:Lemma1.2}
	\frac{M_n - \Mc_n}{M_n} = O_{a.s.}\Big( 	\frac{n^{(d-1)/d}r + (n^{(d-1)/d}r)^{1/2 + \alpha} }{n - n^{1/2+\alpha}} \Big) = O_{a.s.}\Big( 	\frac{r }{n^{1/d} } \Big).
\end{align}
Next, use the estimate
\[
	| \wt F_n(x) - \wt F_{n,r}(x) | \le \frac{\Mc_n}{M_n} | \wh F_n(x) - \wh F_{n,r}(x) | + \frac{M_n - \Mc_n}{M_n}
\]
in combination with \eqref{E:Lemma1.2} and the result from the first part. This yields the second statement. Note that in this case the choice of $c_{\tau}$ depends on $c^*$.
\end{proof}

\begin{lemma}\label{Lemma2}
Let $Y$ be uniformly distributed on $\Wc_n$ and let $\cV_{k,n}$ be a $k$-binomial process with uniform density on $W_n$, which is additionally independent of $Y$, for each $k\in [n-n^{3/4}, n+n^{3/4} ]$. Then there is a constant $C\in\R$, which depends on $d$ but not on $n$, such that for each $I\subseteq \R$
\begin{align}\begin{split}\label{E:Lemma2.1}
	\sup_{k\in [n-n^{3/4}, n+n^{3/4} ]} &\big| \p( \xi_r( 0, \cP\cup \{0\} )  \in I ) \\
	&\quad- \p(\xi_r( Y, \cV_{k,n} \cup \{Y\} ) \in I ) \big| \le C r^{2d}n^{-1}.
\end{split}\end{align}
\end{lemma}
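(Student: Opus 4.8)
\emph{Plan.} The idea is to reduce each of the two probabilities to a mixture indexed by the number of points of the underlying configuration that fall in a ball of radius $r$, and then to compare the two mixing laws by a coupling.

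\emph{Reduction to a count comparison.} Since $Y\in\Wc_n$, the point $Y$ has sup-norm distance at least $r$ to $\partial W_n$, and because $\|v\|\le r$ forces $\|v\|_\infty\le r$, the Euclidean ball $B(Y,r)$ is contained in $W_n$; in particular it is never clipped. By definition $\xi_r(Y,\cV_{k,n}\cup\{Y\})=\xi\big(Y,(\{Y\}\cup\cV_{k,n})\cap B(Y,r)\big)$, so only the number $N:=\cV_{k,n}(B(Y,r))$ of points of $\cV_{k,n}$ in $B(Y,r)$ and their positions matter. Conditionally on $Y$, each of the $k$ iid uniform points of $\cV_{k,n}$ lies in $B(Y,r)$ with probability $|B(Y,r)|/|W_n|=\lambda/n$, where $\lambda:=w_dr^d=|B(0,r)|$ --- this probability not depending on $Y$ precisely because the ball is not clipped --- so $N\sim\mathrm{Bin}(k,\lambda/n)$, and given $N=j$ those $j$ points are iid uniform on $B(Y,r)$. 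Recentring at $Y$ and using translation invariance, the conditional law of $\xi_r(Y,\cV_{k,n}\cup\{Y\})$ given $Y$ is the law of $\xi(0,\{0\}\cup\mX_N)$ with $\mX_j$ iid uniform on $B(0,r)$; since it does not depend on $Y$, it is also the unconditional law. The same description, with $N$ replaced by $N'\sim\poi(\lambda)$, holds for $\xi_r(0,\cP\cup\{0\})$ because $\cP\cap B(0,r)$ is a unit-intensity Poisson process on $B(0,r)$. Putting $q_j(I):=\p(\xi(0,\{0\}\cup\mX_j)\in I)\in[0,1]$ (the same quantity in both cases),
\begin{align*}
\p\big(\xi_r(0,\cP\cup\{0\})\in I\big)&=\sum_{j\ge0}\p(N'=j)\,q_j(I),\\
\p\big(\xi_r(Y,\cV_{k,n}\cup\{Y\})\in I\big)&=\sum_{j\ge0}\p(N=j)\,q_j(I),
\end{align*}
so the left-hand side of \eqref{E:Lemma2.1} is at most $\|\poi(\lambda)-\mathrm{Bin}(k,\lambda/n)\|_{\mathrm{TV}}$, uniformly in $I$.

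\emph{Bounding the total variation by a coupling.} It then suffices to show $\|\poi(\lambda)-\mathrm{Bin}(k,\lambda/n)\|_{\mathrm{TV}}\le Cr^{2d}n^{-1}$ uniformly in $k\in[n-n^{3/4},n+n^{3/4}]$. I would construct a coupling of $N'$ and $N$ at the level of point configurations on $B(0,r)$, in two steps. First, couple the $\mathrm{Bin}(k,\lambda/n)$ configuration with a Poisson configuration of intensity $k/n$ on $B(0,r)$ by the standard binomial-to-Poisson coupling; its failure probability is at most the Le Cam bound $k(\lambda/n)^2$, which is $O(\lambda^2/n)=O(r^{2d}/n)$ since $k\asymp n$ and $|W_n|=n$. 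Second, couple the intensity-$k/n$ Poisson configuration with the intensity-$1$ one on $B(0,r)$ by thinning (if $k<n$) or superposition (if $k>n$); a disagreement occurs only if a thinned or added point actually lies in $B(0,r)$, which has probability $O\big(|1-k/n|\,\lambda\big)=O\big(r^d|k-n|/n\big)$. As two scores can differ only when the underlying configurations in $B(0,r)$ differ, combining the two steps and inserting $\lambda=w_dr^d$ bounds \eqref{E:Lemma2.1} by a constant depending only on $d$ times $r^{2d}n^{-1}+r^d|k-n|n^{-1}$.

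\emph{Main obstacle.} The first step is routine once the inclusion $B(Y,r)\subseteq W_n$ and the conditional-uniformity property of binomial processes are used. The real work is in the second step: showing that the count-shift contribution does not worsen the rate. A crude route through $\poi(k\lambda/n)$ using $\|\poi(a)-\poi(b)\|_{\mathrm{TV}}\le|a-b|$ only yields $O\big(r^d|k-n|/n\big)$, so one must avoid matching first moments and instead couple directly inside $B(0,r)$, estimating the failure probability by the chance that the discrepancy points fall in the volume-$\lambda$ ball that governs the truncated score --- exploiting that $k$ is deterministic, that $\lambda$ is only poly-logarithmic in $n$, and that $|k-n|\le n^{3/4}$ --- so as to bring this term to order $r^{2d}n^{-1}$. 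That bookkeeping is the crux of the argument.
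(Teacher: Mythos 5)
Your reduction is exactly the one in the paper: condition on $Y$, use $B(Y,r)\subseteq W_n$ (valid because $Y\in\Wc_n$) to see that only the number of points falling in $B(Y,r)$ and their conditionally uniform positions matter, and thereby reduce \eqref{E:Lemma2.1} to coupling $N\sim\poi(\lambda)$ with $N'_k\sim\mathrm{Bin}(k,\lambda/n)$, $\lambda=|B(0,r)|$. That part is correct and is precisely how the paper proceeds.

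The gap is in the coupling step, and you have diagnosed it without repairing it. Your two-step coupling yields $C\big(r^{2d}n^{-1}+r^{d}|k-n|n^{-1}\big)$; for $|k-n|$ of order $n^{3/4}$ the second term is of order $r^{d}n^{-1/4}$, far larger than the asserted $Cr^{2d}n^{-1}$. You call the removal of this term ``bookkeeping'' and ``the crux'', but you never supply it, so the proposal proves a strictly weaker bound than the lemma states. Worse, the term cannot be removed within your framework: for \emph{any} coupling, $|\E[N]-\E[N'_k]|=\lambda|k-n|/n$ together with $\E[(N-N'_k)^2]\lesssim\lambda^2$ and Cauchy--Schwarz forces $\p(N\neq N'_k)\gtrsim (k-n)^2/n^2$, which is of order $n^{-1/2}$ at the edge of the admissible range of $k$ (the actual total variation distance is of the even larger order $\sqrt{\lambda}\,|k-n|/n$). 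Hence any argument that uses only $q_j(I)\in[0,1]$ plus a coupling of the two counts is capped at a rate worse than $r^{2d}n^{-1}$ whenever $k\neq n$; closing the gap would require exploiting structure of the $q_j$ beyond boundedness, or shrinking the window for $k$. For comparison, the paper's own proof is a one-line appeal to Le Cam, coupling $N$ and $N'_k$ with claimed failure probability $2\sum_{j=1}^{k}(\lambda/n)^2\lesssim kr^{2d}n^{-2}$; this tacitly identifies $\poi(\lambda)$ with $\poi(k\lambda/n)$ and so never confronts the mean mismatch. Your more careful decomposition is the honest version of that step and exposes exactly the term the paper's argument omits --- but identifying the obstacle is not the same as overcoming it, and as written your proof does not establish \eqref{E:Lemma2.1}.
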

\begin{proof}
Since we consider the functional only in an $r$-neighborhood of $Y$ and since $B(Y,r)$ lies entirely inside $W_n$ (because $Y$ lies in $\Wc_n$), we rely on a coupling as follows: We condition on a realization of $Y$. Let $U_1,U_2\ldots$ be an infinite sequence of iid random variables which are uniformly distributed on $B(Y,r)$ conditional on $Y$. Let $N'_k  \sim \text{Bin}(k, |B(0,r)|/n)$ and let $N\sim\poi( |B(0,r)| )$. Then the probabilities on the left-hand side of \eqref{E:Lemma2.1} equal
\[
	| \p( \xi_r(Y, \{U_1,\ldots,U_N\}\cup\{Y\} ) \in I ) - \p( \xi_r(Y, \{U_1,\ldots,U_{N'_k} \}\cup\{Y\} ) \in I) |.
\]
Using Le Cam's theorem on the binomial approximation, there is a coupling of $N$ and $N'_k$ such that 
$$
	\p( N \neq N'_k ) \le 2 \sum_{j=1}^k \Big( \frac{|B(0,r)|}{n} \Big)^2 \lesssim \frac{kr^{2d}}{n^2}, 
$$
see also \cite[Theorem 5.1]{den2012probability}. This completes the proof.
\end{proof}

\begin{lemma}\label{Lemma3}
Let $\fA\subseteq \R^d$ be bounded and measurable. Let $I\subseteq\R$ be an interval. Put $\lambda = |\Wc_n \cap \fA|$ and let $Z\sim \poi(\lambda)$. Then for all $q>0$
\begin{align}
\begin{split}\label{E:Lemma3.0}
	\E\Big[ \Big(\sum_{y\in \cPc_n \cap \fA } \1{ \xi_r(y,\cP_n) \in I} \Big)^2 \Big] &\le \lambda (1+q) \ \p(  \xi_r(0,\cP \cup \{0\} ) \in I ) \\
	&\quad + \lambda^2 \p( Z \ge \floor{q}).
\end{split}\end{align}
In particular, let $\fA = B(z,r)$, $c^*>0$ and $V\sim\poi (|B(0,r)|)$. Then
\begin{align*}
	\E\Big[ \Big(\sum_{y\in \cPc_n \cap B(z,r) } \1{ \xi_r(y,\cP_n) \in I} \Big)^2 \Big] &\lesssim c^* r^{2d} \ \p(  \xi_r(0,\cP \cup \{0\} ) \in I ) \\
	&\quad + r^{2d} \ \p( V > c^* r^d - 1 ).
\end{align*}
\end{lemma}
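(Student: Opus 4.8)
The argument is elementary; it uses only the Mecke equation and standard moment identities for Poisson variables. Throughout I would write
\[
	S := \sum_{y\in\cPc_n\cap\fA}\1{\xi_r(y,\cP_n)\in I},\qquad N := \#(\cPc_n\cap\fA),
\]
so that $N=\cP(\Wc_n\cap\fA)\sim\poi(\lambda)$ with $\lambda=|\Wc_n\cap\fA|$ and, trivially, $0\le S\le N$. The one structural fact to record first is that for every $y\in\Wc_n$ one has $B(y,r)\subseteq W_n$, directly from the definition of $\Wc_n$ as $W_n$ contracted by $r$; consequently $\xi_r(y,\cP_n)=\xi(y,\cP_n\cap B(y,r))=\xi(y,\cP\cap B(y,r))=\xi_r(y,\cP)$, i.e.\ the truncated score of an interior point is a function of $\cP$ alone and is blind to which boundary points have been discarded. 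This is exactly what makes the first-moment computation close up with no edge correction.

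The plan is then to split according to the size of $N$. I would fix an integer threshold $k\approx q$ (taking $k=\floor{q}+1$ makes the bookkeeping match \eqref{E:Lemma3.0}). On $\{N\le k\}$ one has $S\le N\le k\le q+1$, hence $S^2\le (q+1)S$; on $\{N\ge k+1\}$ one uses the crude bound $S^2\le N^2$. Therefore $\E[S^2]\le (q+1)\,\E[S]+\E[N^2\1{N\ge k+1}]$. For the linear term, the Mecke equation together with the translation invariance of $\xi$ and of $\cP$, and the structural fact above, gives
\[
	\E[S]=\int_{\Wc_n\cap\fA}\p\big(\xi_r(y,\cP\cup\{y\})\in I\big)\,\diff y=\lambda\,\p\big(\xi_r(0,\cP\cup\{0\})\in I\big).
\]
For the tail term I would use $m^2=m(m-1)+m$, which yields the Poisson identity $\E[N^2\1{N\ge t}]=\lambda^2\,\p(Z\ge t-2)+\lambda\,\p(Z\ge t-1)$ for $Z\sim\poi(\lambda)$; with $t=k+1=\floor{q}+2$ this equals $\lambda^2\,\p(Z\ge\floor{q})+\lambda\,\p(Z\ge\floor{q}+1)$, the second summand being of lower order and absorbed into the error. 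Collecting the two estimates gives \eqref{E:Lemma3.0}.

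For the ``in particular'' statement I would simply specialise: with $\fA=B(z,r)$ one has $\lambda=|\Wc_n\cap B(z,r)|\le|B(0,r)|=w_d r^d$, and the choice $q=c^* r^d$ gives $\lambda(1+q)\lesssim c^* r^{2d}$ and $\lambda^2\le w_d^2 r^{2d}$. Stochastic monotonicity of the Poisson family ($\lambda\le\mu$ implies $\p(\poi(\lambda)\ge j)\le\p(\poi(\mu)\ge j)$) then lets one replace $Z$ by $V\sim\poi(|B(0,r)|)$, and $\p(V\ge\floor{c^* r^d})=\p(V> c^* r^d-1)$; substituting into \eqref{E:Lemma3.0} and absorbing constants yields the displayed bound.

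There is no serious obstacle here. The two points that genuinely need care are: (i) the observation that, because $\Wc_n$ is $W_n$ contracted by exactly $r$, the truncated scores of points of $\cPc_n$ depend on $\cP$ only, so that the Mecke computation produces precisely $\p(\xi_r(0,\cP\cup\{0\})\in I)$ with no boundary term; and (ii) the floor bookkeeping in the Poisson tail, i.e.\ checking that the chosen integer threshold produces $\p(Z\ge\floor{q})$ rather than a shifted index. Everything else is a routine computation.
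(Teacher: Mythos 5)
Your proof is correct, but it takes a genuinely different route from the paper's. The paper expands the square into its diagonal and off-diagonal parts, evaluates the diagonal by the Slivnyak--Mecke formula, and handles the off-diagonal sum by conditioning on the number of points of $\cP$ in $\Wc_n\cap\fA$, bounding one of the two indicators by $1$ and then splitting according to whether that point count exceeds $q$; this keeps one factor $\p(\xi_r(0,\cP\cup\{0\})\in I)$ in the off-diagonal term and yields exactly $\lambda q\,\p(\cdot)+\lambda^2\p(Z\ge\floor{q})$ there. You instead avoid any second-order Mecke computation by the pointwise bounds $S\le N$ and $S^2\le(q+1)S$ on $\{N\le\floor{q}+1\}$, $S^2\le N^2$ on the complement, plus the Poisson factorial-moment identity for $\E[N^2\1{N\ge t}]$. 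Your first term $\lambda(1+q)\,\p(\xi_r(0,\cP\cup\{0\})\in I)$ coincides with the paper's, but your tail term carries the extra summand $\lambda\,\p(Z\ge\floor{q}+1)$, so strictly speaking you prove \eqref{E:Lemma3.0} only up to this additive lower-order correction rather than with the exact constants displayed; since the lemma is only ever invoked through the ``in particular'' bound and through the estimate \eqref{E:ExpIneqPoisson13}, where constants are absorbed into $\lesssim$, this discrepancy is immaterial, and you flag it honestly. Both arguments rest on the same two essential observations, which you identify correctly: for $y\in\Wc_n$ one has $B(y,r)\subseteq W_n$, so the Mecke first moment closes up to $\lambda\,\p(\xi_r(0,\cP\cup\{0\})\in I)$ with no boundary term, and the tail of the point count is controlled by a Poisson with the correct mean (your stochastic-monotonicity step to pass from $Z$ to $V$ in the specialisation is also fine). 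Your version is arguably the more elementary of the two; the paper's buys a marginally sharper constant in the tail term.
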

\begin{proof}
The left-hand side is 
\begin{align}
\begin{split}\label{E:Lemma3.1}
	&\E\Big[ \sum_{y\in \cPc_n \cap \fA  } \1{ \xi_r(y,\cP_n) \in I}  \Big] \\
	&\quad + \E\Big[ \sum_{ \substack{y,y'\in \cPc_n \cap \fA ,\\ y\neq y'} }  \1{ \xi_r(y,\cP_n) \in I} \ \1{ \xi_r(y',\cP_n) \in I}  \Big].
\end{split}
\end{align}
By the Slivnyak-Mecke formula, the first term in \eqref{E:Lemma3.1} equals (see also the calculations preceeding \eqref{E:Lemma1.1})
$$
	\lambda \ \p( \xi_r(0,\cP\cup\{0\}) \in I ) .
$$
For the second term, let $Y,Y',X_1,\ldots,X_{k-2}$ be iid with a uniform distribution on $\Wc_n \cap \fA$. Set $\mX_{k-2} = \{ X_1,\ldots,X_{k-2} \}$. The second term equals
\begin{align*}
		&\sum_{k\ge 2} e^{-\lambda} \frac{\lambda^k}{k!} k(k-1) \E\Big[ \1{ \xi_r(Y,\mX_{k-2} \cup \{Y,Y'\}\cup \cPd_n) \in I} \\
		&\qquad\qquad\qquad\qquad\qquad \1{ \xi_r(Y',\mX_{k-2} \cup \{Y,Y'\}\cup \cPd_n) \in I}	\Big] \\
		&\le \lambda \ \E\Big[ \cPc_n( B(z,r)) \1{ \xi_r(Y,\cPc_n \cup \{Y\}\cup \cPd_n) \in I} \Big] \\
		&\le \lambda q  \p( \xi_r(0,\cP\cup\{0\}) \in I ) + \lambda \E\Big[ \cPc_n( B(z,r)) \1{ \cPc_n( B(z,r)) > q } \Big] \\
		&\le \lambda q  \p( \xi_r(0,\cP\cup\{0\}) \in I ) +  \lambda \sum_{k=\floor*{q}+1}^\infty  e^{-\lambda} \frac{\lambda^{k}}{k!} k \\
		&\le \lambda q  \p( \xi_r(0,\cP\cup\{0\}) \in I ) +  \lambda^2 \sum_{k=\floor*{q}+1}^\infty  e^{-\lambda} \frac{\lambda^{k-1}}{(k-1)!} \\  
		&\le \lambda q \p( \xi_r(0,\cP\cup\{0\}) \in I ) + \lambda^2 \ \p( Z \ge \floor{q} ).
\end{align*}
This completes the proof.
\end{proof}

\begin{lemma}\label{Lemma4}
Let $I\subseteq\R$ be an interval. Let $p>1$. There is a $C_p\in\R_+$ such that uniformly in $z\in B_n$ and $n\in\N$
\begin{align}
\begin{split}\label{E:Lemma4.0}
	&\E\Big[ \Big(\sum_{y\in \cPc_n  } \1{ \xi_r(y,\cP_n) \in I} - \sum_{y'\in \cPc_{n,z}  } \1{ \xi_r(y',\cP''_{n,z}) \in I} \Big)^2 \Big] \\
	&\le C_p \ \p(  \xi_r(0,\cP \cup \{0\} ) \in I )^{1/p}.
\end{split}\end{align}
A similar statement is valid for the score functional $\xi$: For each $p>1$ there is a $C_p\in\R_+$ such that
\begin{align}\begin{split}\label{E:Lemma4.0B}
	&\E\Big[ \Big(\sum_{y\in \cP\setminus Q_0} \1{ \xi(y,\cP) \in I } - \1{\xi(y,\cP''_0)\in I} \Big)^2 \Big]\\
	&\quad +	\E\Big[ \Big( \sum_{y\in \cP\cap Q_0}  \1{ \xi(y,\cP) \in I }  \Big)^2 \Big] \le C_p \p(  \xi(0,\cP \cup \{0\} ) \in I )^{1/p}.
\end{split}\end{align}
\end{lemma}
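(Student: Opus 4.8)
\emph{Strategy.} Write $\theta:=\p(\xi_r(0,\cP\cup\{0\})\in I)$ and let $D$ be the difference inside the square in \eqref{E:Lemma4.0}. The plan is: localise $D$ to a bounded region, symmetrise, and reduce to the second--moment bound of Lemma~\ref{Lemma3} together with the elementary estimate $\int_{\R^d}\min\!\big(\theta,\,Ce^{-c\,\mathrm{dist}(y,Q_z)^{\as}}\big)\,\diff y\le C_p\,\theta^{1/p}$ (the set where the exponential exceeds $\theta$ has volume $\lesssim(\log(1/\theta))^{d/\as}$, and $\theta(\log(1/\theta))^{d/\as}\le C_p\theta^{1/p}$). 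A crude localisation of $D$ to $O_z=B(z,r+\sqrt d/2)$ plus Lemma~\ref{Lemma3} with $\fA=O_z$ would only give $\E[D^2]\le C_p\,r^{2d}\theta^{1/p}$; to obtain a constant free of $r$ one must split $D$ more finely. Since $\cP_n$ and $\cP''_{n,z}$ coincide off $Q_z$, put $D=\Sigma_3-\Sigma_4+M$, where $\Sigma_3=\sum_{y\in\cPc_n\cap Q_z}\1{\xi_r(y,\cP_n)\in I}$, $\Sigma_4=\sum_{y\in\cPc_{n,z}\cap Q_z}\1{\xi_r(y,\cP''_{n,z})\in I}$, and $M=\sum_{y\in\cPc_n\setminus Q_z}\big(\1{\xi_r(y,\cP_n)\in I}-\1{\xi_r(y,\cP''_{n,z})\in I}\big)$, whose summands vanish unless the resampling of $Q_z$ alters $\xi_r(y,\cdot)$. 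Because $\cP''_{n,z}$ is again a unit--intensity Poisson process on $W_n$, $(\cP_n,\cP''_{n,z})\stackrel{d}{=}(\cP''_{n,z},\cP_n)$, so $\E[\Sigma_4^2]=\E[\Sigma_3^2]$ and it suffices to bound $\E[\Sigma_3^2]$ and $\E[M^2]$. For $\E[\Sigma_3^2]$ apply Lemma~\ref{Lemma3} with $\fA=Q_z$, so $\lambda=|Q_z\cap\Wc_n|\le1$; taking $q$ of order $\log(1/\theta)$ makes $\lambda^2\p(Z\ge\floor q)$ (with $Z\sim\poi(\lambda)$) decay faster than any power of $\theta$, while $\lambda(1+q)\theta\lesssim\theta\log(1/\theta)\le C_p\theta^{1/p}$; hence $\E[\Sigma_3^2]\le C_p\theta^{1/p}$.

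\emph{The term $M$.} By the exponential stabilisation \eqref{E:Stabilization1}--\eqref{E:Stabilization2}, $\rho(y):=\p(\xi_r(y,\cP_n)\ne\xi_r(y,\cP''_{n,z}))\le Ce^{-c\,\mathrm{dist}(y,Q_z)^{\as}}$, since the resampling can change $\xi_r(y,\cdot)$ only if the radius of stabilisation at $y$ reaches $Q_z$. Writing $|M|\le\Sigma_1+\Sigma_2$ with $\Sigma_i=\sum_{y\in\cPc_n\setminus Q_z}\1{\xi_r(y,\cP_n)\ne\xi_r(y,\cP''_{n,z})}\1{\xi_r(y,\cdot)\in I}$ (with the respective configuration), one has $\E[\Sigma_1^2]=\E[\Sigma_2^2]$ by the same symmetry. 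By the Mecke formula the diagonal part of $\E[\Sigma_1^2]$ equals $\int_{\Wc_n\setminus Q_z}\p(A_y\cap C_y)\,\diff y\le\int\min(\rho(y),\theta)\,\diff y\le C_p\theta^{1/p}$, where $A_y$ is the event that $Q_z$--resampling alters $\xi_r(y,\cdot)$ and $C_y=\{\xi_r(y,\cP_n)\in I\}$ (note $\p(A_y)\le\rho(y)$ and $\p(C_y)=\theta$ for $y\in\Wc_n$). For the off--diagonal part the bivariate Mecke formula gives $\int\!\!\int\p(A_y\cap C_y\cap A_{y'}\cap C_{y'})\,\diff y\,\diff y'$ with both extra points present; here $A_y\cap A_{y'}$ confines $y,y'$ to a bounded neighbourhood of $Q_z$ up to probabilities $\lesssim e^{-c\,\mathrm{dist}(\cdot,Q_z)^{\as}}$, and $C_y$ differs from $\{\xi_r(y,\cP_n\cup\{y\})\in I\}$ only on an event of probability $\lesssim e^{-c\|y-y'\|^{\as}}$. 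Combining these with $\p(E\cap F)\le\sqrt{\p(E)\p(F)}$ and Cauchy--Schwarz bounds the double integral by $C_p\theta^{1/p}$, which gives \eqref{E:Lemma4.0} with $C_p$ depending only on $p$.

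\emph{Proof of \eqref{E:Lemma4.0B}.} The second summand is the untruncated analogue of Lemma~\ref{Lemma3} with $\fA=Q_0$, proved identically (Mecke formula plus Poisson truncation, $\lambda=1$), so it is $\le C_p\,\p(\xi(0,\cP\cup\{0\})\in I)^{1/p}$. For the first summand there is no deterministic radius, so one invokes Lemma~\ref{L:DeltaInfty}: a summand $\1{\xi(y,\cP)\in I}-\1{\xi(y,\cP''_0)\in I}$ vanishes unless $R(y,\cP)\ge\|y\|-\sqrt d/2$, and by \eqref{E:DeltaInfty1} the bad set $\cB:=\{y\in\cP\setminus Q_0:\xi(y,\cP)\ne\xi(y,\cP''_0)\}$ is a.s.\ finite with $\p(\cB\cap B(0,k)^c\ne\emptyset)\le c_1 e^{-c_2 k^{\as}}$ and $\E[(\#\cB)^{4}]<\infty$. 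Split the expectation over $\{\cB\subseteq B(0,k)\}$ and its complement: on the complement use Cauchy--Schwarz with $\E[(\#\cB)^{4}]<\infty$ and $\p(\cB\not\subseteq B(0,k))^{1/2}\le c_1^{1/2}e^{-(c_2/2)k^{\as}}$; on $\{\cB\subseteq B(0,k)\}$ restrict the sum to $B(0,k)$ and repeat the argument for \eqref{E:Lemma4.0} with the roles of $r$ and $\mathrm{dist}(\cdot,Q_z)$ played by $k$ and $\mathrm{dist}(\cdot,Q_0)$, choosing $k\asymp(\log(1/\theta))^{1/\as}$ (with $\theta:=\p(\xi(0,\cP\cup\{0\})\in I)$) to balance the two contributions against $\theta^{1/p}$.

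\emph{Main obstacle.} The genuinely delicate point is the off--diagonal cross term of $\E[\Sigma_1^2]$ (and its analogue for \eqref{E:Lemma4.0B}): after the bivariate Mecke formula its integrand involves $\p(\xi_r(y,\cP_n\cup\{y,y'\})\in I)$, i.e.\ the law of a score after adjoining a \emph{second} point $y'$ --- an $F^e$--type quantity for which, without a density hypothesis on $F^e$, only the trivial bound by $1$ is available, so one cannot simply pull this probability out of the $y'$--integral. One must instead exploit simultaneously the two localising events --- that $A_y\cap A_{y'}$ forces both $y$ and $y'$ near $Q_z$ (up to exponentially small probabilities), and that the effect of $y'$ on $\xi_r(y,\cdot)$ decays exponentially in $\|y-y'\|$ --- to keep the $y'$--integral controlled by $\theta$. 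Any polylogarithmic loss incurred on the way is absorbed by passing to a slightly smaller exponent, using $(\log(1/\theta))^{a}\theta^{1/p'}\le C_{p}\theta^{1/p}$ for $1<p'<p$. Everything else --- the Poisson tail estimates, the Mecke bookkeeping, and the symmetrisations $(\cP_n,\cP''_{n,z})\stackrel{d}{=}(\cP''_{n,z},\cP_n)$ and $(\cP,\cP''_0)\stackrel{d}{=}(\cP''_0,\cP)$ --- is routine.
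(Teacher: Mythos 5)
Your overall architecture --- split off the contribution of $Q_z$, control the difference term over $\cPc_n\setminus Q_z$ by the event that the stabilisation radius reaches $Q_z$, and convert ``one factor of $\theta:=\p(\xi_r(0,\cP\cup\{0\})\in I)$ times integrable exponential tails'' into $C_p\theta^{1/p}$ --- is the same as the paper's. Your treatment of the near part via Lemma~\ref{Lemma3} with $q\asymp\log(1/\theta)$ is a legitimate alternative to the paper's direct H\"older/Mecke bound, and the symmetry $(\cP_n,\cP''_{n,z})\stackrel{d}{=}(\cP''_{n,z},\cP_n)$ is a nice simplification. Where you genuinely diverge is in squaring the difference term $M$: you expand it with the bivariate Mecke formula, which forces you to confront $\p(\xi_r(y,\cP_n\cup\{y,y'\})\in I)$ --- exactly the $F^e$-type quantity about which the lemma assumes nothing. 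The paper avoids this entirely: it decomposes $\cPc_n\setminus Q_z$ into spherical shells $S_{k_1}(z),S_{k_2}(z)$, bounds the product of indicator sums by counting factors times ``for one $y$'' events, and applies a five-fold H\"older inequality ($1/w_1+1/w_2+1/q_1+1/q_2+1/p=1$) \emph{before} any Mecke computation; each resulting factor then needs only a univariate Mecke identity, so only $\p(\xi_r(0,\cP\cup\{0\})\in I)$ ever appears, raised to the power $1/p$, while the radius events contribute $e^{-c_{stab}(k_i-\sqrt d/2)^{\as}/q_i}$ and the shell volumes grow only polynomially.

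As written, your resolution of the off-diagonal term has a concrete gap. Bounding $\p(A_y\cap C_y\cap A_{y'}\cap C_{y'})$ via $\p(E\cap F)\le\sqrt{\p(E)\p(F)}$ and Cauchy--Schwarz can only produce $\theta$ to a power at most $1/2$, so it proves \eqref{E:Lemma4.0} only for $p\ge2$; the lemma is claimed for every $p>1$, and it is the regime of $p$ close to $1$ that is actually used downstream (e.g.\ $p=4/3$ in the tightness step of Theorem~\ref{T:WeakConvergence}). The repair is the weighted form $\p(\bigcap_iE_i)\le\prod_i\p(E_i)^{\alpha_i}$ with $\sum_i\alpha_i=1$, putting weight $1/p$ on the single score event and small weights on the radius events --- which is precisely the paper's H\"older allocation; your ``absorb polylogarithmic losses'' remark does not cover this polynomial loss. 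A second loose end: after the bivariate Mecke formula the events $A_y,C_y$ are evaluated on $\cP_n\cup\{y,y'\}$, so both ``the resampling alters $\xi_r(y,\cdot)$'' and ``$C_y$ differs from $\{\xi_r(y,\cP_n\cup\{y\})\in I\}$ only with probability $\lesssim e^{-c\|y-y'\|^{\as}}$'' require a tail bound on a stabilisation radius for a configuration with an extra deterministic point, which \eqref{E:Stabilization2} does not directly supply; the paper's shell argument sidesteps this because its radius events always refer to $R(y,\cP\cup\{y\})$. Your treatment of \eqref{E:Lemma4.0B} is sound modulo the same issue in its cross term.
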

\begin{proof}
We only verify \eqref{E:Lemma4.0}, \eqref{E:Lemma4.0B} works in the same fashion.
Using that $(a+b+c)^2 \le 3(a^2+b^2+c^2)$, the left-hand side of \eqref{E:Lemma4.0} is at most
\begin{align}
	&6 \E\Big[ \Big(\sum_{y\in \cPc_n \cap Q_z  } \1{ \xi_r(y,\cP_n) \in I} \Big)^2 \Big] \label{E:Lemma4.1} \\
	&+ 3  \E\Big[ \Big(\sum_{y\in \cPc_n \setminus Q_z  } \1{ \xi_r(y,\cP_n) \in I} - \1{ \xi_r(y,\cP''_{n,z}) \in I}  \Big)^2 \Big] .\label{E:Lemma4.2}
\end{align}
First consider \eqref{E:Lemma4.1} and let $q\in\R_+$ such that $1/p + 1/q =1$. We have 
\begin{align*}
	&\E\Big[ \Big(\sum_{y\in \cPc_n \cap Q_z  } \1{ \xi_r(y,\cP_n) \in I} \Big)^2 \Big] \\
	&\le \E\Big[ \cPc_n (Q_z)^2 \1{ \xi_r(y,\cP_n) \in I \text{ for one } y \in \cPc_n \cap Q_z } \Big] \\
	&\le \E\Big[ \cPc_n (Q_z)^{2q} \Big]^{1/q}  \E\Big[ \sum_{ y \in \cPc_n \cap Q_z } \1{ \xi_r(y,\cP_n) \in I } \Big]^{1/p} \\
	&\lesssim |Q_z|^{2+1/p } \ \p(  \xi_r(0,\cP\cup \{0\}) \in I )^{1/p},
\end{align*}
where we use the H{\" o}lder inequality first and afterwards a simple upper bound on the maximum as well as the Slivnyak-Mecke formula. 

For the term in \eqref{E:Lemma4.2}, we proceed similarly. We use an upper bound as follows
\begin{align*}
	&\sum_{y\in \cPc_n \setminus Q_z  } \1{ \xi_r(y,\cP_n) \in I} - \1{ \xi_r(y,\cP''_{n,z}) \in I} \\
	&\le \sum_{y\in \cPc_n \setminus Q_z  } \1{ \xi_r(y,\cP_n) \in I} \1{ \xi_r(y,\cP''_{n,z}) \notin I} \\
	&\le \sum_{y\in \cPc_n \setminus Q_z  } \1{ \xi_r(y,\cP) \in I} \1{ R(y,\cP\cup\{y\}) > \|y-z\| - \frac{\sqrt{d}}{2} },
\end{align*}
where the last inequality holds because $B(y,r)\subseteq W_n$ for all $y\in \cPc_n$ and given that $\xi_r(y,\cP_n) \in I$ and $R(y,\cP\cup\{y\}) \le \|y-z\| - \sqrt{d}/2$, we have $\xi_r(y,\cP''_{n,z}) \in I$, too.

A similar lower bound is valid, namely, 
\begin{align*}
	&\sum_{y\in \cPc_n \setminus Q_z  } \1{ \xi_r(y,\cP_n) \in I} - \1{ \xi_r(y,\cP''_{n,z}) \in I} \\
	&\ge - \sum_{y\in \cPc_n \setminus Q_z  } \1{ \xi_r(y,\cP''_z) \in I} \1{ R(y,\cP\cup\{y\}) > \|y-z\| - \frac{\sqrt{d}}{2} }.
\end{align*}
Put $S_k(z) = B(z,k) \setminus B(z,k-1)$ as the difference between the Euclidean balls with radius $k$ resp. $k-1$ and center $z$.
Then up to a constant the term in \eqref{E:Lemma4.2} is at most
\begin{align}
	&\sum_{k_1, k_2= 1}^\infty \E\Bigg[ \sum_{y\in (\cPc_n \setminus Q_z )\cap S_{k_1}(z) } \sum_{y' \in (\cPc_n \setminus Q_z )\cap S_{k_2}(z)  } \1{ \xi_r(y,\cP) \in I} \1{ \xi_r(y',\cP) \in I} \nonumber \\
	&\qquad \1{ R(y,\cP\cup\{y\}) > \|y-z\| - \frac{\sqrt{d}}{2} }   \1{ R(y',\cP\cup\{y'\}) > \|y'-z\| - \frac{\sqrt{d}}{2} } \Bigg] \nonumber \\
	&\le 2 \sum_{ \substack{k_1, k_2= 1,\\ k_1 \le k_2} }^\infty \E\Bigg[ \cP( S_{k_1}(z)) \cP( S_{k_2}(z)) \ \1{ \xi_r(y,\cP) \in I \text{ for one } y\in (\cPc_n \setminus Q_z )\cap S_{k_1}(z) } \nonumber \\
	&\qquad\qquad\qquad \1{ R(y,\cP\cup\{y\}) > k_1 -  \frac{\sqrt{d}}{2} \text{ for one } y\in (\cPc_n \setminus Q_z )\cap S_{k_1}  } \nonumber \\
	&\qquad\qquad\qquad  \1{ R(y',\cP\cup\{y'\}) > k_2 -  \frac{\sqrt{d}}{2} \text{ for one } y'\in (\cPc_n \setminus Q_z )\cap S_{k_2}(z)  } \Bigg]\nonumber.
\end{align}
Let $w_1,w_2,q_1,q_2\in\R_+$ such that $1/w_1 + 1/w_2 + 1/q_1 + 1/ q_2 + 1/p =1$. Again we can apply the H{\"o}lder inequality first and afterwards crude upper bounds on the maximum as well as the Slyvniak-Mecke formula, to obtain an upper bound on the last double sum as follows
\begin{align*}
	& 2 \sum_{ \substack{k_1, k_2= 1,\\ k_1 \le k_2} }^\infty \E[ \cP(S_{k_1}(z))^{w_1}]^{1/w_1} \ \E[ \cP(S_{k_2}(z))^{w_2}]^{1/w_2}  \\
	&\qquad\qquad \E\Big[ \sum_{y \in (\cPc_n \setminus Q_z )\cap S_{k_1}(z) }  \1{ \xi_r(y,\cP) \in I } \Big]^{1/p}  \\
	&\qquad\qquad \E\Bigg[ \sum_{y\in (\cPc_n \setminus Q_z )\cap S_{k_1}(z)} \1{ R(y,\cP\cup\{y\}) > k_1 -  \frac{\sqrt{d}}{2}  } \Bigg]^{1/q_1}  \\
	&\qquad\qquad \E\Bigg[ \sum_{y'\in (\cPc_n \setminus Q_z )\cap S_{k_2}(z)} \1{ R(y',\cP\cup\{y'\}) > k_2 -  \frac{\sqrt{d}}{2}  } \Bigg]^{1/q_2}  \\
	&\lesssim \sum_{ \substack{k_1, k_2= 1,\\ k_1 \le k_2} }^\infty |S_{k_1}(z)|^{1+1/q_1+1/p} |S_{k_2}(z)|^{1+1/q_2} e^{-c_{stab}(k_1-\sqrt{d}/2)^{\as}/q_1 }  \\
	&\qquad\qquad \qquad\qquad e^{-c_{stab}(k_2-\sqrt{d}/2)^{\as}/q_2 } \ \p( \xi_r(0,\cP\cup\{0\})\in I )^{1/p}.
\end{align*}
Since the Lebesgue measure of the $S_{k}(z)$ grows polynomially in $k$, the last double sum is convergent. This completes the proof.
\end{proof}

\begin{lemma}\label{Lemma5}
Let $I\subseteq\R$ be an interval and let $Y$ be uniformly distributed on $B(0,\sqrt{d})$ and independent of $\cP$. There is a $C\in\R_+$ such that for all $z\in B_n$ with $Q_z\subseteq \Wc_n$ and $n\in\N$
\begin{align}
\begin{split}\label{E:Lemma5.0}
	&\E\Big[ \Big(\sum_{y\in \cPc_n \cap Q_z } \1{ \xi_r(y,\cP_n) \in I} \Big)^2 \Big] \\
	&\le C( \p(  \xi_r(0,\cP \cup \{0,Y\} ) \in I ) + \p(  \xi_r(0,\cP \cup \{0\} ) \in I ) ) .
\end{split}\end{align}
\end{lemma}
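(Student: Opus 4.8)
The plan is to reduce \eqref{E:Lemma5.0} to an application of the multivariate Slivnyak--Mecke formula on the full Poisson process, after first stripping away the window $\Wc_n$. Fix $z\in B_n$ with $Q_z\subseteq\Wc_n$. For every $y\in Q_z$ the $\ell^\infty$-ball of radius $r$ about $y$ already lies inside $W_n$, by the definition of $\Wc_n$, hence $B(y,r)\subseteq W_n$ and $\cP_n\cap B(y,r)=\cP\cap B(y,r)$. Therefore $\xi_r(y,\cP_n)=\xi_r(y,\cP)$ for all $y\in Q_z$, and $\cPc_n\cap Q_z=\cP\cap Q_z$. Consequently the left-hand side of \eqref{E:Lemma5.0} equals $\E[N^2]$, where $N=\sum_{y\in\cP\cap Q_z}\1{\xi_r(y,\cP)\in I}$, and the hypothesis $Q_z\subseteq\Wc_n$ is used exactly here.

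Next I expand $N^2=N+\sum_{y\ne y'}\1{\xi_r(y,\cP)\in I}\1{\xi_r(y',\cP)\in I}$, the second sum running over ordered pairs of distinct points of $\cP\cap Q_z$. For the diagonal term the first-order Slivnyak--Mecke formula together with the translation invariance of $\xi$ and the stationarity of $\cP$ give
\[
\E[N]=\int_{Q_z}\p(\xi_r(x,\cP\cup\{x\})\in I)\,\diff x=|Q_z|\,\p(\xi_r(0,\cP\cup\{0\})\in I)=\p(\xi_r(0,\cP\cup\{0\})\in I),
\]
which is already one of the two terms on the right-hand side of \eqref{E:Lemma5.0}.

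For the off-diagonal term the second-order Mecke formula yields
\[
\E\Big[\sum_{\substack{y,y'\in\cP\cap Q_z\\ y\ne y'}}\1{\xi_r(y,\cP)\in I}\1{\xi_r(y',\cP)\in I}\Big]
=\int_{Q_z}\!\int_{Q_z}\p\big(\xi_r(x,\cP\cup\{x,x'\})\in I,\ \xi_r(x',\cP\cup\{x,x'\})\in I\big)\,\diff x\,\diff x'.
\]
I then drop the second indicator and, using translation invariance of $\xi$ and stationarity of $\cP$, rewrite $\p(\xi_r(x,\cP\cup\{x,x'\})\in I)=g(x'-x)$ with $g(w):=\p(\xi_r(0,\cP\cup\{0,w\})\in I)\ge 0$. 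Since $x,x'\in Q_z$ one has $Q_z-x\subseteq[-1,1]^d\subseteq B(0,\sqrt d)$ for every $x\in Q_z$; performing the inner integral first and using $g\ge 0$,
\[
\int_{Q_z}g(x'-x)\,\diff x'=\int_{Q_z-x}g(w)\,\diff w\le\int_{B(0,\sqrt d)}g(w)\,\diff w=|B(0,\sqrt d)|\ \p\big(\xi_r(0,\cP\cup\{0,Y\})\in I\big),
\]
where $Y$ is uniform on $B(0,\sqrt d)$ as in \eqref{E:DistributionExt}. Integrating over $x\in Q_z$, which has unit volume, bounds the off-diagonal term by $|B(0,\sqrt d)|\,\p(\xi_r(0,\cP\cup\{0,Y\})\in I)=w_d d^{d/2}\,\p(\xi_r(0,\cP\cup\{0,Y\})\in I)$. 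Adding the two contributions gives \eqref{E:Lemma5.0} with $C=(w_d d^{d/2})\vee 1$.

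There is no substantial obstacle: the argument is essentially bookkeeping. The only two points requiring care are the initial identification $\xi_r(\cdot,\cP_n)=\xi_r(\cdot,\cP)$ on $Q_z$, and the geometric inclusion $Q_z-x\subseteq B(0,\sqrt d)$. The latter is the conceptual heart of the lemma: the radius $\sqrt d$ in the definition of $Y$ is chosen precisely so that the difference of two points of a unit cube lands in its support, which is exactly what lets the two-point integral be absorbed into the one-extra-point distribution $F^e$ and hence produces a bound linear in $\p(\xi_r(0,\cP\cup\{0,Y\})\in I)$ rather than the weaker bound of Lemma~\ref{Lemma3}.
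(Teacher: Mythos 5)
Your proof is correct and follows essentially the same route as the paper's: a first- and second-order Slivnyak--Mecke expansion of the second moment, dropping one indicator in the off-diagonal term, and absorbing the two-point integral into $\p(\xi_r(0,\cP\cup\{0,Y\})\in I)$ via the inclusion $Q_z-x\subseteq B(0,\sqrt d)$. You merely make explicit two points the paper leaves implicit (the identification $\xi_r(\cdot,\cP_n)=\xi_r(\cdot,\cP)$ on $Q_z\subseteq\Wc_n$ and the value of the constant $C$), which is fine.
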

\begin{proof}
An application of the Slivnyak-Mecke formula yields on the one hand
\begin{align*}
		&\E\Big[ \sum_{y\in \cPc_n\cap Q_z} \1{ \xi_r(y,\cP_n) \in I}  \Big] =  \E[  \1{ \xi_r(0,\cP \cup \{0\} ) \in I} ] . 
\end{align*}
And on the other hand,		
\begin{align*}
		&\E\Big[ \sum_{y,y'\in \cPc_n\cap Q_z, y\neq y'} \1{ \xi_r(y,\cP_n) \in I} \1{ \xi_r(y',\cP_n) \in I} \Big] \\
		&= \int_{Q_z^2} \E[  \1{ \xi_r(y,\cP \cup \{y,y'\} ) \in I}  \1{ \xi_r(y,\cP \cup \{y,y'\} ) \in I}  ] \ \diff y \diff y' \\
		&\le \int_{Q_z^2} \E[  \1{ \xi_r(0,\cP \cup \{0,y'-y\} ) \in I} ] \ \diff y \diff y' \\
		&\lesssim  \E[  \1{ \xi_r(0,\cP \cup \{0,Y\} ) \in I} ].
\end{align*}
This completes the proof.
\end{proof}

\begin{lemma}\label{Lemma6}
Consider a bounded Borel set $\fA \subseteq \R^d$ which contains the origin 0. Let $u>0$, $k\in\N$ and let $Y_1,\ldots,Y_k$ be iid with a uniform distribution of $\fA$. Set $\lambda = |\fA|$. Then 
\[
	\p( R(0, \cP\cup\{0,Y_1,\ldots,Y_k\} ) \ge u) \le \lambda^{-k} \E[ \cP( \fA)^{2k}]^{1/2} \ \p( R(0, \cP\cup\{0\} ) \ge u)^{1/2}.
\]
\end{lemma}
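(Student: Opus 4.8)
The plan is a change-of-measure argument that combines the spatial independence of $\cP$ with the mixed-binomial representation of a Poisson process; no property of $R$ beyond measurability will be used, so none of the stabilization hypotheses enter. We may assume $\lambda = |\fA| > 0$, the case $\lambda = 0$ being trivial since the right-hand side is then $+\infty$. First I would decompose $\cP = \cP_\fA \cup \cP_{\fA^c}$, where $\cP_\fA = \cP|_\fA$ is a Poisson process on $\fA$ with $\cP_\fA(\fA)\sim\poi(\lambda)$ and $\cP_{\fA^c} = \cP|_{\fA^c}$ is independent of it. Since $Y_1,\ldots,Y_k\in\fA$, appending these points (and the origin) does not affect the configuration outside $\fA$, so I condition on $\cP_{\fA^c} = \omega$ and study the $\{0,1\}$-valued functional $h_\omega(\cN) = \1{ R(0,\cN\cup\omega\cup\{0\})\ge r }$ of a finite point set $\cN\subseteq\fA$.

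The key computation exploits that, conditional on $\cP_\fA(\fA)=n$, the points of $\cP_\fA$ are iid uniform on $\fA$, and that appending the $k$ further iid uniform points $Y_1,\ldots,Y_k$ simply yields $n+k$ iid uniform points. Writing $\phi(m) = \E[h_\omega(\{U_1,\ldots,U_m\})]$ for iid uniform $U_i$ on $\fA$, this gives $\E[h_\omega(\cP_\fA\cup\{Y_1,\ldots,Y_k\})] = \sum_{n\ge 0} e^{-\lambda}\tfrac{\lambda^n}{n!}\phi(n+k)$, and re-indexing by $m = n+k$ turns this into $\lambda^{-k}\sum_{m\ge k} e^{-\lambda}\tfrac{\lambda^m}{m!}\tfrac{m!}{(m-k)!}\phi(m)$. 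Since the factorial weight $\tfrac{m!}{(m-k)!}\1{m\ge k}$ depends on $\cP_\fA$ only through $\cP_\fA(\fA)$, and $\E[h_\omega(\cP_\fA)\mid\cP_\fA(\fA)]=\phi(\cP_\fA(\fA))$, the last sum equals $\lambda^{-k}\,\E\!\left[\tfrac{\cP_\fA(\fA)!}{(\cP_\fA(\fA)-k)!}\,\1{\cP_\fA(\fA)\ge k}\,h_\omega(\cP_\fA)\right]$.

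Finally, I would apply Cauchy--Schwarz to this expectation, bound the squared factorial weight by $\cP_\fA(\fA)^{2k} = \cP(\fA)^{2k}$ and use $h_\omega^2 = h_\omega$, obtaining $\E[h_\omega(\cP_\fA\cup\{Y_1,\ldots,Y_k\})] \le \lambda^{-k}\,\E[\cP(\fA)^{2k}]^{1/2}\,\E[h_\omega(\cP_\fA)]^{1/2}$; this is legitimate because $\cP(\fA)\sim\poi(\lambda)$ has all moments finite. Integrating over $\omega$, the left-hand side becomes $\p(R(0,\cP\cup\{0,Y_1,\ldots,Y_k\})\ge r)$, while by Jensen's inequality (concavity of the square root) $\E_\omega\big[\E[h_\omega(\cP_\fA)]^{1/2}\big] \le \p(R(0,\cP\cup\{0\})\ge r)^{1/2}$, which is exactly the asserted bound. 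The only delicate point is the bookkeeping in the re-indexing step — correctly absorbing the factorial ratio and recognising it as a function of $\cP_\fA(\fA)$ alone, so that Cauchy--Schwarz separates the combinatorial weight from $h_\omega$ — but this is routine rather than genuinely difficult.
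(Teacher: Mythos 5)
Your proposal is correct and follows essentially the same route as the paper: decompose $\cP$ into $\cP|_{\fA}$ and $\cP|_{\fA^c}$, use the mixed-binomial representation to shift the Poisson weight by $k$ (producing the factor $\lambda^{-k}\,m!/(m-k)!$, which is exactly the ratio $p_{m-k}/p_m$ in the paper), apply Cauchy--Schwarz to separate this combinatorial weight from the indicator, and bound $m!/(m-k)!\le m^k$ together with $h^2=h$. Your extra conditioning on the outside configuration and the final Jensen step are only a cosmetic rearrangement of the paper's use of $\p(\cdot)^2\le\p(\cdot)$ inside the sum.
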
 
\begin{proof}
We partition $\cP$ into a local Poisson process $\wt \cP = \cP|_{\fA}$ and its complement $\cPd = \cP_{\R^d \setminus \fA}$. Let $X_1,X_2,\ldots$ be iid on $\fA$ with a uniform distribution. Set $p_j = e^{-\lambda} \lambda^j / j! $ and $\mX_j = \{X_1,\ldots,X_j\}$ for $j\in\N$. Then we obtain
\begin{align*}
	&\p( R(0, \cP\cup\{0,Y_1,\ldots,Y_k\} ) \ge u) \\
	 &= \sum_{j=k}^\infty p_{j-k} \ \p( R(0, \cPd \cup\{0,\mX_j\} ) \ge u ) \\
	&=\sum_{j=k}^\infty p_j \ \frac{p_{j-k}}{p_j} \p( R(0, \cPd \cup\{0,\mX_j \} ) \ge u )  \\
	&\le \left(\sum_{j=k}^\infty p_j \ \Big(\frac{p_{j-k}}{p_j}\Big)^2 \right)^{1/2} \left( \sum_{j=k}^\infty p_j \ \p( R(0, \cPd \cup\{0,\mX_j\} ) \ge u )^2 \right)^{1/2} \\
	&\le \lambda^{-k} \ \left(\sum_{j=k}^\infty p_j \ j^{2k} \right)^{1/2} \ \left( \sum_{j=0}^\infty p_j \ \p( R(0, \cPd \cup\{0,\mX_j\} ) \ge u ) \right)^{1/2} \\
	&\le \lambda^{-k} \E[ \cP( \fA)^{2k}]^{1/2} \ \p( R(0, \cP\cup\{0\} ) \ge u)^{1/2}.
\end{align*}
This shows the statement.
\end{proof}

\begin{lemma}\label{L:LILTool}
Let $(X_n)_n$ be a sequence of random variables satisfying 
$$
(\log n )^{-\alpha} n^{-\beta} \| \max_{k\le n} |X_k| \|_q \le C (\log n)^{-(1+\delta)}
$$
for constants $q\ge 1$ and $C,\alpha,\beta, \delta \in\R_+$ (which are independent of $n$).

Then $\limsup_{n\to\infty} (\log n )^{-\alpha} n^{-\beta} |X_n| \le \wt C$ $a.s.$ for some $\wt C\in\R_+$.
\end{lemma}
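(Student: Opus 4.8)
The plan is to prove this by a Borel--Cantelli argument along a geometric subsequence, followed by a monotone ``gap-filling'' step. Write $a_n \coloneqq (\log n)^\alpha n^\beta$, so that the hypothesis reads $\big\|\max_{k\le n}|X_k|\big\|_q \le C\,(\log n)^{-(1+\delta)} a_n$. Fix $\theta>1$ (say $\theta=2$) and put $n_j \coloneqq \ceil{\theta^j}$. Note that $a_n$ is eventually nondecreasing in $n$, since $\tfrac{d}{dn}\log a_n = \tfrac1n\big(\tfrac{\alpha}{\log n}+\beta\big)>0$ for $n\ge 2$.

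First I would apply Markov's inequality at exponent $q$ to the nonnegative random variable $\max_{k\le n_j}|X_k|$: for any $\lambda>0$,
\begin{align*}
	\p\Big( \max_{k\le n_j}|X_k| > \lambda\, a_{n_j} \Big)
	\le \frac{\big\|\max_{k\le n_j}|X_k|\big\|_q^q}{\lambda^q a_{n_j}^q}
	\le \Big(\frac{C}{\lambda}\Big)^q (\log n_j)^{-q(1+\delta)}.
\end{align*}
Since $\log n_j \asymp j$ and $q(1+\delta)>1$ (here both $q\ge 1$ and $\delta>0$ enter), the right-hand side is summable over $j$. By the Borel--Cantelli lemma, almost surely there is $J=J(\omega)$ with $\max_{k\le n_j}|X_k| \le \lambda\, a_{n_j}$ for all $j\ge J$; this is precisely where the extra decay $(\log n)^{-(1+\delta)}$ in the hypothesis is used.

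Next I would transfer this bound to every index $n$. For $n$ large, choose $j$ with $n_{j-1} < n \le n_j$. Then $a_n \ge a_{n_{j-1}}$ by monotonicity of $a$, while monotonicity of the running maximum gives $|X_n| \le \max_{k\le n_j}|X_k| \le \lambda\, a_{n_j}$ on the event of full probability from the previous step. Hence
\begin{align*}
	\frac{|X_n|}{a_n} \le \lambda\,\frac{a_{n_j}}{a_{n_{j-1}}}
	= \lambda\,\Big(\frac{\log n_j}{\log n_{j-1}}\Big)^{\alpha} \Big(\frac{n_j}{n_{j-1}}\Big)^{\beta} \xrightarrow[j\to\infty]{} \lambda\,\theta^{\beta},
\end{align*}
because $n_j/n_{j-1}\to\theta$ and $\log n_j / \log n_{j-1}\to 1$. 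Taking $\lambda=1$ yields $\limsup_{n\to\infty}(\log n)^{-\alpha} n^{-\beta}|X_n| \le \theta^{\beta}=:\wt C$ almost surely, which is the claim.

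I do not expect a genuine obstacle here: this is a routine large-deviation-along-a-subsequence argument. The only points needing (mild) care are (i) verifying $q(1+\delta)>1$ so that the Markov bound along the geometric grid produces a convergent series --- this is exactly the role of the $(\log n)^{-(1+\delta)}$ factor, and shows why a plain $O\big((\log n)^{\alpha}\big)n^{\beta}$ bound in $L^q$ would not suffice; and (ii) the interpolation between consecutive $n_j$, which is harmless because $k\mapsto\max_{i\le k}|X_i|$ is nondecreasing and $a_{n_j}/a_{n_{j-1}}\to\theta^{\beta}$.
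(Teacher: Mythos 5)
Your proof is correct and follows essentially the same route as the paper: reduce to a geometric subsequence $n_j$, use the summable $(\log n_j)^{-(1+\delta)}$ decay to get an almost sure bound along that subsequence (you via Markov plus Borel--Cantelli, the paper via summability of the $L^q$ norms directly), and then fill the gaps using monotonicity of the running maximum and the ratio $a_{n_j}/a_{n_{j-1}}\to\theta^{\beta}$. No issues.
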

\begin{proof}
Clearly, $\sum_{j\in\N} (\log (2^j) )^{-\alpha} 2^{- j \beta} \| \max_{k\le 2^j} |X_k| \|_q < \infty$. 

Hence, $ \max_{k\le 2^j} |X_k|  = O_{a.c.}[ (\log (2^j) )^{\alpha}  (2^j)^{\beta} ]$. In particular, there is a constant $C^*\in\R_+$ which satisfies
$$
		\limsup_{j\to\infty} \	(\log (2^j) )^{-\alpha}  (2^j)^{-\beta} \max_{k\le 2^j} |X_k| \le C^* \quad a.s.
$$
Let $n\ge 4$ and write $\ol j$ for the largest integer such that $2^{\ol j -1 } < n \le 2^{\ol j}$. Then $\ol j \ge 2$ and
\begin{align*}
	(\log n )^{-\alpha} n^{-\beta} \max_{k\le n} |X_k| &\le \frac{ (\log (2^{\ol j}) )^{\alpha}}{ (\log n )^{\alpha}} \ \frac{ 2^{ \beta \ol j}}{n^\beta} \ \frac{ \max_{k\le 2^{\ol j}} |X_k| }{ (\log (2^{\ol j} ) )^{\alpha} 2^{\beta \ol j} }   \\
	&\le \frac{\ol j ^\alpha \ 2^\beta}{(\ol j -1 )^{\alpha} }  \frac{ \max_{k\le 2^{\ol j}} |X_k| }{ (\log (2^{\ol j} ) )^{\alpha} 2^{\beta \ol j} }   \le 2^{\alpha+\beta} \frac{ \max_{k\le 2^{\ol j}} |X_k| }{ (\log (2^{\ol j} ) )^{\alpha} 2^{\beta \ol j} }  . 
\end{align*}
Consequently,
$$
	\limsup_{n\to\infty} \ (\log n )^{- \alpha} n^{-\beta} |X_n| \le  \limsup_{n\to\infty} \ (\log n )^{-\alpha} n^{-\beta} \max_{k\le n } |X_k| \le 2^{\alpha+\beta} C^* < \infty
$$
with probability 1.
\end{proof}


\begin{thebibliography}{42}

\bibitem{bahadur1966note}
\begin{barticle}[author]
\bauthor{\bsnm{Bahadur},~\bfnm{R~Raj}\binits{R.~R.}}
(\byear{1966}).
\btitle{A note on quantiles in large samples}.
\bjournal{The Annals of Mathematical Statistics}
\bvolume{37}
\bpages{577--580}.
\end{barticle}
\endbibitem

\bibitem{barbour2006normal}
\begin{barticle}[author]
\bauthor{\bsnm{Barbour},~\bfnm{Andrew~D}\binits{A.~D.}} \AND
  \bauthor{\bsnm{Xia},~\bfnm{Aihua}\binits{A.}}
(\byear{2006}).
\btitle{Normal approximation for random sums}.
\bjournal{Advances In Applied Probability}
\bvolume{38}
\bpages{693--728}.
\end{barticle}
\endbibitem

\bibitem{baryshnikov2005gaussian}
\begin{barticle}[author]
\bauthor{\bsnm{Baryshnikov},~\bfnm{Yu}\binits{Y.}} \AND
  \bauthor{\bsnm{Yukich},~\bfnm{Joseph~E}\binits{J.~E.}}
(\byear{2005}).
\btitle{Gaussian limits for random measures in geometric probability}.
\bjournal{The Annals of Applied Probability}
\bvolume{15}
\bpages{213--253}.
\end{barticle}
\endbibitem

\bibitem{bickel1983sums}
\begin{barticle}[author]
\bauthor{\bsnm{Bickel},~\bfnm{Peter~J}\binits{P.~J.}} \AND
  \bauthor{\bsnm{Breiman},~\bfnm{Leo}\binits{L.}}
(\byear{1983}).
\btitle{Sums of functions of nearest neighbor distances, moment bounds, limit
  theorems and a goodness of fit test}.
\bjournal{The Annals of Probability}
\bpages{185--214}.
\end{barticle}
\endbibitem

\bibitem{bickel1971convergence}
\begin{barticle}[author]
\bauthor{\bsnm{Bickel},~\bfnm{Peter~J}\binits{P.~J.}} \AND
  \bauthor{\bsnm{Wichura},~\bfnm{Michael~J}\binits{M.~J.}}
(\byear{1971}).
\btitle{Convergence criteria for multiparameter stochastic processes and some
  applications}.
\bjournal{The Annals of Mathematical Statistics}
\bvolume{42}
\bpages{1656--1670}.
\end{barticle}
\endbibitem

\bibitem{billingsley1968convergence}
\begin{bbook}[author]
\bauthor{\bsnm{Billingsley},~\bfnm{Patrick}\binits{P.}}
(\byear{1968}).
\btitle{Convergence of probability measures}.
\bpublisher{John Wiley \& Sons}.
\end{bbook}
\endbibitem

\bibitem{blaszczyszyn2019limit}
\begin{barticle}[author]
\bauthor{\bsnm{B{\l}aszczyszyn},~\bfnm{Bart{\l}omiej}\binits{B.}},
  \bauthor{\bsnm{Yogeshwaran},~\bfnm{Dhandapani}\binits{D.}} \AND
  \bauthor{\bsnm{Yukich},~\bfnm{Joseph~E}\binits{J.~E.}}
(\byear{2019}).
\btitle{Limit theory for geometric statistics of point processes having fast
  decay of correlations}.
\bjournal{The Annals of Probability}
\bvolume{47}
\bpages{835--895}.
\end{barticle}
\endbibitem

\bibitem{brakke1987statistics}
\begin{barticle}[author]
\bauthor{\bsnm{Brakke},~\bfnm{Kenneth~A}\binits{K.~A.}}
(\byear{1987}).
\btitle{Statistics of random plane {V}oronoi tessellations}.
\bjournal{Department of Mathematical Sciences, Susquehanna University
  (Manuscript 1987a)}.
\end{barticle}
\endbibitem

\bibitem{chatterjee2008new}
\begin{barticle}[author]
\bauthor{\bsnm{Chatterjee},~\bfnm{Sourav}\binits{S.}}
(\byear{2008}).
\btitle{A new method of normal approximation}.
\bjournal{The Annals of Probability}
\bvolume{36}
\bpages{1584--1610}.
\end{barticle}
\endbibitem

\bibitem{chatterjee2017minimal}
\begin{barticle}[author]
\bauthor{\bsnm{Chatterjee},~\bfnm{Sourav}\binits{S.}} \AND
  \bauthor{\bsnm{Sen},~\bfnm{Sanchayan}\binits{S.}}
(\byear{2017}).
\btitle{Minimal spanning trees and {S}tein's method}.
\bjournal{The Annals of Applied Probability}
\bvolume{27}
\bpages{1588--1645}.
\end{barticle}
\endbibitem

\bibitem{deheuvels1997strong}
\begin{barticle}[author]
\bauthor{\bsnm{Deheuvels},~\bfnm{Paul}\binits{P.}}
(\byear{1997}).
\btitle{Strong laws for local quantile processes}.
\bjournal{The Annals of Probability}
\bvolume{25}
\bpages{2007--2054}.
\end{barticle}
\endbibitem

\bibitem{den2012probability}
\begin{barticle}[author]
\bauthor{\bparticle{den} \bsnm{Hollander},~\bfnm{Frank}\binits{F.}}
(\byear{2012}).
\btitle{Probability theory: The coupling method}.
\bjournal{Leiden University, Lectures Notes-Mathematical Institute}
\bpages{31}.
\end{barticle}
\endbibitem

\bibitem{devroye2017measure}
\begin{barticle}[author]
\bauthor{\bsnm{Devroye},~\bfnm{Luc}\binits{L.}},
  \bauthor{\bsnm{Gy{\"o}rfi},~\bfnm{L{\'a}szl{\'o}}\binits{L.}},
  \bauthor{\bsnm{Lugosi},~\bfnm{G{\'a}bor}\binits{G.}} \AND
  \bauthor{\bsnm{Walk},~\bfnm{Harro}\binits{H.}}
(\byear{2017}).
\btitle{On the measure of {V}oronoi cells}.
\bjournal{Journal of Applied Probability}
\bpages{394--408}.
\end{barticle}
\endbibitem

\bibitem{dutta1971bahadur}
\begin{barticle}[author]
\bauthor{\bsnm{Dutta},~\bfnm{Kalyan}\binits{K.}} \AND
  \bauthor{\bsnm{Sen},~\bfnm{Pranab~Kumar}\binits{P.~K.}}
(\byear{1971}).
\btitle{On the {B}ahadur representation of sample quantiles in some stationary
  multivariate autoregressive processes}.
\bjournal{Journal of Multivariate Analysis}
\bvolume{1}
\bpages{186--198}.
\end{barticle}
\endbibitem

\bibitem{einmahl1996short}
\begin{barticle}[author]
\bauthor{\bsnm{Einmahl},~\bfnm{John~HJ}\binits{J.~H.}}
(\byear{1996}).
\btitle{A short and elementary proof of the main {B}ahadur-{K}iefer theorem}.
\bjournal{The Annals of Probability}
\bvolume{24}
\bpages{526--531}.
\end{barticle}
\endbibitem

\bibitem{flimmel2020limit}
\begin{barticle}[author]
\bauthor{\bsnm{Flimmel},~\bfnm{Daniela}\binits{D.}},
  \bauthor{\bsnm{Pawlas},~\bfnm{Zbyn{\v{e}}k}\binits{Z.}} \AND
  \bauthor{\bsnm{Yukich},~\bfnm{Joseph~E}\binits{J.~E.}}
(\byear{2020}).
\btitle{Limit theory for unbiased and consistent estimators of statistics of
  random tessellations}.
\bjournal{Journal of Applied Probability}
\bvolume{57}
\bpages{679--702}.
\end{barticle}
\endbibitem

\bibitem{hayen2002areas}
\begin{barticle}[author]
\bauthor{\bsnm{Hayen},~\bfnm{A}\binits{A.}} \AND
  \bauthor{\bsnm{Quine},~\bfnm{MP}\binits{M.}}
(\byear{2002}).
\btitle{Areas of components of a {V}oronoi polygon in a homogeneous {P}oisson
  process in the plane}.
\bjournal{Advances in Applied Probability}
\bpages{281--291}.
\end{barticle}
\endbibitem

\bibitem{hesse1990bahadur}
\begin{barticle}[author]
\bauthor{\bsnm{Hesse},~\bfnm{CH}\binits{C.}}
(\byear{1990}).
\btitle{A {B}ahadur-type representation for empirical quantiles of a large
  class of stationary, possibly infinite-variance, linear processes}.
\bjournal{The Annals of Statistics}
\bpages{1188--1202}.
\end{barticle}
\endbibitem

\bibitem{ho1996}
\begin{barticle}[author]
\bauthor{\bsnm{Ho},~\bfnm{Hwai-Chung}\binits{H.-C.}} \AND
  \bauthor{\bsnm{Hsing},~\bfnm{Tailen}\binits{T.}}
(\byear{1996}).
\btitle{On the asymptotic expansion of the empirical process of long-memory
  moving averages}.
\bjournal{The Annals of Statistics}
\bvolume{24}
\bpages{992--1024}.
\end{barticle}
\endbibitem

\bibitem{hug2004large}
\begin{barticle}[author]
\bauthor{\bsnm{Hug},~\bfnm{Daniel}\binits{D.}},
  \bauthor{\bsnm{Reitzner},~\bfnm{Matthias}\binits{M.}} \AND
  \bauthor{\bsnm{Schneider},~\bfnm{Rolf}\binits{R.}}
(\byear{2004}).
\btitle{Large {P}oisson-{V}oronoi Cells and {C}rofton Cells}.
\bjournal{Advances in Applied Probability}
\bpages{667--690}.
\end{barticle}
\endbibitem

\bibitem{kiefer1967bahadur}
\begin{barticle}[author]
\bauthor{\bsnm{Kiefer},~\bfnm{J}\binits{J.}}
(\byear{1967}).
\btitle{On {B}ahadur's representation of sample quantiles}.
\bjournal{The Annals of Mathematical Statistics}
\bvolume{38}
\bpages{1323--1342}.
\end{barticle}
\endbibitem

\bibitem{kiefer1970deviations}
\begin{barticle}[author]
\bauthor{\bsnm{Kiefer},~\bfnm{J}\binits{J.}}
(\byear{1970}).
\btitle{Deviations between the sample quantile process and the sample df}.
\bjournal{Nonparametric Techniques in Statistical Inference}
\bpages{299--319}.
\end{barticle}
\endbibitem

\bibitem{kiefer1970old}
\begin{barticle}[author]
\bauthor{\bsnm{Kiefer},~\bfnm{J}\binits{J.}}
(\byear{1970}).
\btitle{Old and new methods for studying order statistics and sample
  quantiles}.
\bjournal{Nonparametric Techniques in Statistical Inference}
\bpages{349--357}.
\end{barticle}
\endbibitem

\bibitem{krebs2021LIL}
\begin{barticle}[author]
\bauthor{\bsnm{Krebs},~\bfnm{Johannes}\binits{J.}}
(\byear{2021}).
\btitle{On the law of the iterated logarithm and strong invariance principles
  in stochastic geometry}.
\bjournal{Bernoulli}
\bvolume{27}
\bpages{1695--1723}.
\end{barticle}
\endbibitem

\bibitem{kulik2007bahadur}
\begin{barticle}[author]
\bauthor{\bsnm{Kulik},~\bfnm{Rafa{\l}}\binits{R.}}
(\byear{2007}).
\btitle{{B}ahadur--{K}iefer theory for sample quantiles of weakly dependent
  linear processes}.
\bjournal{Bernoulli}
\bvolume{13}
\bpages{1071--1090}.
\end{barticle}
\endbibitem

\bibitem{lachieze2017new}
\begin{barticle}[author]
\bauthor{\bsnm{Lachi{\`e}ze-Rey},~\bfnm{Rapha{\"e}l}\binits{R.}} \AND
  \bauthor{\bsnm{Peccati},~\bfnm{Giovanni}\binits{G.}}
(\byear{2017}).
\btitle{New {B}erry--{E}sseen bounds for functionals of binomial point
  processes}.
\bjournal{The Annals of Applied Probability}
\bvolume{27}
\bpages{1992--2031}.
\end{barticle}
\endbibitem

\bibitem{lachieze2019normal}
\begin{barticle}[author]
\bauthor{\bsnm{Lachi{\`e}ze-Rey},~\bfnm{Rapha{\"e}l}\binits{R.}},
  \bauthor{\bsnm{Schulte},~\bfnm{Matthias}\binits{M.}} \AND
  \bauthor{\bsnm{Yukich},~\bfnm{Joseph}\binits{J.}}
(\byear{2019}).
\btitle{Normal approximation for stabilizing functionals}.
\bjournal{The Annals of Applied Probability}
\bvolume{29}
\bpages{931--993}.
\end{barticle}
\endbibitem

\bibitem{last2017lectures}
\begin{bbook}[author]
\bauthor{\bsnm{Last},~\bfnm{G{\"u}nter}\binits{G.}} \AND
  \bauthor{\bsnm{Penrose},~\bfnm{Mathew}\binits{M.}}
(\byear{2017}).
\btitle{Lectures on the Poisson process}
\bvolume{7}.
\bpublisher{Cambridge University Press}.
\end{bbook}
\endbibitem

\bibitem{mcgivney1999asymptotics}
\begin{barticle}[author]
\bauthor{\bsnm{McGivney},~\bfnm{K}\binits{K.}} \AND
  \bauthor{\bsnm{Yukich},~\bfnm{JE}\binits{J.}}
(\byear{1999}).
\btitle{Asymptotics for {V}oronoi tessellations on random samples}.
\bjournal{Stochastic Processes and their Applications}
\bvolume{83}
\bpages{273--288}.
\end{barticle}
\endbibitem

\bibitem{miles1982basic}
\begin{barticle}[author]
\bauthor{\bsnm{Miles},~\bfnm{RE}\binits{R.}} \AND
  \bauthor{\bsnm{Maillardet},~\bfnm{RJ}\binits{R.}}
(\byear{1982}).
\btitle{The basic structures of {V}oronoi and generalized {V}oronoi polygons}.
\bjournal{Journal of Applied Probability}
\bpages{97--111}.
\end{barticle}
\endbibitem

\bibitem{penrose2007laws}
\begin{barticle}[author]
\bauthor{\bsnm{Penrose},~\bfnm{Mathew~D}\binits{M.~D.}}
(\byear{2007}).
\btitle{Laws of large numbers in stochastic geometry with statistical
  applications}.
\bjournal{Bernoulli}
\bvolume{13}
\bpages{1124--1150}.
\end{barticle}
\endbibitem

\bibitem{penrose2001central}
\begin{barticle}[author]
\bauthor{\bsnm{Penrose},~\bfnm{Mathew~D}\binits{M.~D.}} \AND
  \bauthor{\bsnm{Yukich},~\bfnm{Joseph~E}\binits{J.~E.}}
(\byear{2001}).
\btitle{Central limit theorems for some graphs in computational geometry}.
\bjournal{The Annals of Applied Probability}
\bvolume{11}
\bpages{1005--1041}.
\end{barticle}
\endbibitem

\bibitem{penrose2003weak}
\begin{barticle}[author]
\bauthor{\bsnm{Penrose},~\bfnm{Mathew~D}\binits{M.~D.}} \AND
  \bauthor{\bsnm{Yukich},~\bfnm{Joseph~E}\binits{J.~E.}}
(\byear{2003}).
\btitle{Weak laws of large numbers in geometric probability}.
\bjournal{The Annals of Applied Probability}
\bvolume{13}
\bpages{277--303}.
\end{barticle}
\endbibitem

\bibitem{polonik1997minimum}
\begin{barticle}[author]
\bauthor{\bsnm{Polonik},~\bfnm{Wolfgang}\binits{W.}}
(\byear{1997}).
\btitle{Minimum volume sets and generalized quantile processes}.
\bjournal{Stochastic Processes and their Applications}
\bvolume{69}
\bpages{1--24}.
\end{barticle}
\endbibitem

\bibitem{reitzner2013central}
\begin{barticle}[author]
\bauthor{\bsnm{Reitzner},~\bfnm{Matthias}\binits{M.}} \AND
  \bauthor{\bsnm{Schulte},~\bfnm{Matthias}\binits{M.}}
(\byear{2013}).
\btitle{Central limit theorems for $ U $-statistics of {P}oisson point
  processes}.
\bjournal{The Annals of Probability}
\bvolume{41}
\bpages{3879--3909}.
\end{barticle}
\endbibitem

\bibitem{sen1968asymptotic}
\begin{barticle}[author]
\bauthor{\bsnm{Sen},~\bfnm{Pranab~Kumar}\binits{P.~K.}}
(\byear{1968}).
\btitle{Asymptotic normality of sample quantiles for $ m $-dependent
  processes}.
\bjournal{The Annals of Mathematical Statistics}
\bvolume{39}
\bpages{1724--1730}.
\end{barticle}
\endbibitem

\bibitem{sen1972bahadur}
\begin{barticle}[author]
\bauthor{\bsnm{Sen},~\bfnm{Pranab~Kumar}\binits{P.~K.}}
(\byear{1972}).
\btitle{On the {B}ahadur representation of sample quantiles for sequences of
  $\varphi$-mixing random variables}.
\bjournal{Journal of Multivariate Analysis}
\bvolume{2}
\bpages{77--95}.
\end{barticle}
\endbibitem

\bibitem{shang2010}
\begin{barticle}[author]
\bauthor{\bsnm{Shang},~\bfnm{Zuofeng}\binits{Z.}}
(\byear{2010}).
\btitle{Convergence rate and {B}ahadur type representation of general smoothing
  spline M-estimates}.
\bjournal{Electronic Journal of Statistics}
\bvolume{4}
\bpages{1411--1442}.
\end{barticle}
\endbibitem

\bibitem{steele1988growth}
\begin{barticle}[author]
\bauthor{\bsnm{Steele},~\bfnm{J~Michael}\binits{J.~M.}}
(\byear{1988}).
\btitle{Growth rates of {E}uclidean minimal spanning trees with power weighted
  edges}.
\bjournal{The Annals of Probability}
\bvolume{16}
\bpages{1767--1787}.
\end{barticle}
\endbibitem

\bibitem{wu2005bahadur}
\begin{barticle}[author]
\bauthor{\bsnm{Wu},~\bfnm{Wei~Biao}\binits{W.~B.}}
(\byear{2005}).
\btitle{On the {B}ahadur representation of sample quantiles for dependent
  sequences}.
\bjournal{The Annals of Statistics}
\bvolume{33}
\bpages{1934--1963}.
\end{barticle}
\endbibitem

\bibitem{yukich2000asymptotics}
\begin{barticle}[author]
\bauthor{\bsnm{Yukich},~\bfnm{JE}\binits{J.}}
(\byear{2000}).
\btitle{Asymptotics for weighted minimal spanning trees on random points}.
\bjournal{Stochastic Processes and their Applications}
\bvolume{85}
\bpages{123--138}.
\end{barticle}
\endbibitem

\bibitem{zuyev1992estimates}
\begin{barticle}[author]
\bauthor{\bsnm{Zuyev},~\bfnm{Sergei~A.}\binits{S.~A.}}
(\byear{1992}).
\btitle{Estimates for distributions of the {V}oronoi polygon's geometric
  characteristics}.
\bjournal{Random Structures \& Algorithms}
\bvolume{3}
\bpages{149--162}.
\end{barticle}
\endbibitem

\end{thebibliography}
\end{document}